 \newtheorem{Theorem}{Theorem}[section]
 \newtheorem{Corollary}[Theorem]{Corollary}
 \newtheorem{Lemma}[Theorem]{Lemma}
 \newtheorem{Proposition}[Theorem]{Proposition}
 \newtheorem{Definition}[Theorem]{Definition}
\newtheorem{Question}[Theorem]{Question}
 \newtheorem{Remark}[Theorem]{Remark}
 \numberwithin{equation}{section}
\begin{document}

\title{Boundary points, Minimal $L^{2}$ integrals and Concavity property}
\author{Shijie Bao}
\address{Shijie Bao: Institute of Mathematics, Academy of Mathematics and Systems Science, Chinese
	Academy of Sciences, Beijing 100190, China}
\email{bsjie@amss.ac.cn}

\author{Qi'an Guan}
\address{Qi'an Guan: School of
Mathematical Sciences, Peking University, Beijing 100871, China.}
\email{guanqian@math.pku.edu.cn}

\author{Zheng Yuan}
\address{Zheng Yuan: Institute of Mathematics, Academy of Mathematics and Systems Science, Chinese
	Academy of Sciences, Beijing 100190, China.}
\email{yuanzheng@amss.ac.cn}

\subjclass[2020]{32D15 32E10 32L10 32U05 32W05}

\thanks{}

\keywords{concavity, minimal $L^2$ integral, Jonsson-Musta\c{t}\u{a}'s conjecture, strong openness conjecture}

\date{\today}

\dedicatory{}

\commby{}


\begin{abstract}
For the purpose of proving the strong openness conjecture of multiplier ideal sheaves, Jonsson-Musta\c{t}\u{a} posed an enhanced conjecture and proved the two-dimensional case, which says that: the Lebesgue measure of the set $\big\{c_o^F(\psi)\psi-\log|F|<\log r\big\}$ divided by $r^2$ has a uniform positive lower bound independent of $r$, for a plurisubharmonic function $\psi$ and a holomorphic function $F$ near the origin $o$. Jonsson-Musta\c{t}\u{a}'s conjecture was proved by Guan-Zhou depending on the truth of the strong openness conjecture. However, it is still a question whether one can prove Jonsson-Musta\c{t}\u{a}'s conjecture without using the strong openness property, and obtain a sharp effectiveness result for this conjecture.

In this article, we use an $L^2$ method with the weight functions $\psi-\log|F|$ and firstly consider a module at at a boundary point of the sublevel sets of a plurisubharmonic function. By studying the minimal $L^{2}$ integrals on the sublevel sets of a plurisubharmonic function with respect to the module at the boundary point, we establish a concavity property of the minimal $L^{2}$ integrals. As applications, we obtain a sharp effectiveness result related to Jonsson-Musta\c{t}\u{a}'s conjecture, which completes the approach from the conjecture to the strong openness property. We also obtain a strong openness property of the module and a lower semi-continuity property with respect to the module.
\end{abstract}

\maketitle

\tableofcontents

\section{Introduction}

\subsection{Backgrounds and Motivations}
\

Let $\psi$ be a plurisubharmonic function of a complex manifold $M$ (see \cite{Demaillybook}).
Recall that multiplier ideal sheaf $\mathcal{I}(\psi)$ is the sheaf of germs of holomorphic functions $f$ such that $|f|^2e^{-\psi}$ is locally integrable,
which was widely discussed in the study of several complex variables, complex algebraic geometry and complex differential geometry
(see e.g. \cite{Tian,Nadel,Siu96,DEL,DK01,DemaillySoc,DP03,Lazarsfeld1,Lazarsfeld2,Siu05,Siu09,DemaillyAG,Guenancia}).

The strong openness property of multiplier ideal sheaves, i.e. 
\[\mathcal{I}(\psi)=\mathcal{I}_+(\psi):=\bigcup_{\epsilon>0}\mathcal{I}((1+\epsilon)\psi),\]
is an important feature of multiplier ideal sheaves,
 and ``opened the door to new types of approximation techniques" (see \cite{M-V15})
(see e.g. \cite{GZSOC,K16,cao17,cdM17,FoW18,DEL18,ZZ2018,GZ20,berndtsson20,ZZ2019,ZhouZhu20siu's,FoW20,KS20,DEL21}).
The strong openness property was conjectured by Demailly \cite{DemaillySoc}, and proved by Guan-Zhou \cite{GZSOC} (the 2-dimensional case was proved by Jonsson-Musta\c{t}\u{a} \cite{JM12}). After that, Guan-Zhou \cite{GZeff} established an effectiveness result of the strong openness property by considering the minimal $L^{2}$ integral on the pseudoconvex domain $D$.

When $\mathcal{I}(\psi)=\mathcal{O}$, the strong openness property degenerates to the openness property, which was conjectured by Demailly-Koll\'ar \cite{DK01}
and proved by Berndtsson \cite{Berndtsson2} (the 2-dimensional case was proved by Favre-Jonsson in \cite{FavreJonsson}).

Let $D$ be a pseudoconvex domain in $\mathbb{C}^n$ containing the origin $o\in\mathbb{C}^n$, and let $\psi$ be a plurisubharmonic function on $D$. For a holomorphic function $h$ on a neighborhood of $o$, recall that
\[c_o^h(\psi):=\sup\big\{c\ge0:|h|^2e^{-2c\psi} \ \text{is} \ L^1 \ \text{on a neighborhood of} \ o\big\}\]
is the jumping number (see \cite{JM13}).  
When $h\equiv 1$, $c_o^h(\psi)$ degenerates to the complex singularity exponent $c_o(\psi)$ (see \cite{Tian,DK01}).

In \cite{DK01} (see also \cite{JM13}), Demailly and Koll\'{a}r conjectured the following more precise form of the openness property:

\

\textbf{Conjecture D-K:} 
\emph{If $c_o(\psi)<+\infty$, $\frac{1}{r^2}\mu(\{c_o(\psi)\psi<\log r\})$ has a uniform positive lower bound independent of $r\in(0,1)$, where $\mu$ is the Lebesgue measure on $\mathbb{C}^n$.}

\

The 2-dimensional case of Conjecture D-K was proved by Favre-Jonsson in \cite{FavreJonsson},
which deduces the 2-dimensional case of the openness property.
Depending on the openness property, Guan-Zhou \cite{GZeff} proved Conjecture D-K. 

In \cite{G16}, Guan gave a sharp effectiveness result related to Conjecture D-K independent of the openness property, which completed the approach from Conjecture D-K to the openness property.

In \cite{xu}, Xu completed the algebraic approach to Conjecture D-K.

In order to prove the strong openness property, Jonsson and Musta\c{t}\u{a} (see \cite{JM13}, see also \cite{JM12}) posed the following conjecture, and proved the 2-dimensional case \cite{JM12}:

\

\textbf{Conjecture J-M:}  \emph{If $c_o^F(\psi)<+\infty$ for a holomorphic function $F$ on $D$, then $\frac{1}{r^2}\mu(\{c_o^F(\psi)\psi-\log|F|<\log r\})$ has a uniform positive lower bound independent of $r\in(0,1)$, where $\mu$ is the Lebesgue measure on $\mathbb{C}^n$.}

\

Depending on the strong openness property, Guan-Zhou \cite{GZeff} proved Conjecture J-M:
\begin{Theorem}[\cite{GZeff}]\label{thm:JM.GZ's_result}
	If $c_o^F(\psi)<+\infty$ and $\sup_{D}e^{(1+\delta)\max\{2c_o^{F}(\psi)\psi,2\log|F|\}}<+\infty$, then
	\begin{displaymath}
		\begin{split}
			&\liminf_{r\rightarrow0+0}\frac{1}{r^2}\mu(\{c_o^F(\psi)\psi-\log|F|<\log r\})\\
			\ge& \sup_{\delta\in\mathbb{Z}_{>0}}\frac{C_{F^{1+\delta},2c_o^F(\psi)\psi+\delta\max\{2c_o^{F}(\psi)\psi,2\log|F|\}}(o)}{\left(1+\frac{1}{\delta}\right)\sup_{D}e^{(1+\delta)\max\{2c_o^{F}(\psi)\psi,2\log|F|\}}},
		\end{split}
	\end{displaymath}
where for a plurisubharmonic function $\phi$ and a holomorphic function $F_0$ on $D$,
\[C_{F_0,\phi}(o):=\inf\left\{\int_D|\tilde F|^2:\tilde F\in\mathcal{O}(D) \ \& \ (\tilde F-F_0,o)\in \mathcal{I}(\phi)_o \right\}.\]
\end{Theorem}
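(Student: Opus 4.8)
The plan is to deduce this from Theorem~\ref{thm:main}. Write $c:=c_o^F(\psi)$ and, for $\delta\in\mathbb{Z}_{>0}$, set $M_\delta:=\sup_D e^{(1+\delta)\max\{2c\psi,2\log|F|\}}$ and $\phi_\delta:=2c\psi+\delta\max\{2c\psi,2\log|F|\}$. It suffices to prove, for each $\delta$ with $M_\delta<+\infty$, that
\[
\liminf_{r\to0+}\frac{1}{r^2}\,\mu\bigl(\{c\psi-\log|F|<\log r\}\bigr)\ \ge\ \frac{C_{F^{1+\delta},\phi_\delta}(o)}{\bigl(1+\frac{1}{\delta}\bigr)M_\delta},
\]
and then to take the supremum over such $\delta$. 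Since $c<+\infty$ forces $\psi(o)=-\infty$, the sets $\{c\psi-\log|F|<\log r\}$ shrink into any fixed neighbourhood of $o$ as $r\to0+$, so the left-hand side is unchanged under shrinking $D$; combined with a routine exhaustion (the constants $C_{F^{1+\delta},\phi_\delta}$ and $M_\delta$ increase to their values on $D$ along relatively compact $D_j\uparrow D$, using closedness of the submodule), we may assume $D$ bounded. Now apply Theorem~\ref{thm:main} with $\psi$ replaced by $2c\psi$ (keeping $F$), so $\Psi:=\min\{2c\psi-2\log|F|,0\}$, taking $f:=F^{1+\delta}$ and $J:=I(\Psi)_o$ (admissible, as $I(\Psi)_o\subseteq J=I(\Psi)_o\subseteq I_o$). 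Record the identities $\max\{2c\psi,2\log|F|\}=2c\psi-\Psi$ on $D$ and $\{\Psi<-t\}=\{c\psi-\log|F|<-t/2\}$ for $t\ge0$, so that with $r:=e^{-t/2}\in(0,1]$ one has $\{\Psi<-t\}=\{c\psi-\log|F|<\log r\}$ and $e^{-t}=r^2$; also $|F^{1+\delta}|^2=e^{(1+\delta)\cdot2\log|F|}\le e^{(1+\delta)\max\{2c\psi,2\log|F|\}}\le M_\delta$ on $D$. Since $F^{1+\delta}\in\mathcal{O}(D)$ and $(F^{1+\delta}-f)_o=0\in J$, the function $F^{1+\delta}$ is itself a competitor in $g(t):=G(t;\Psi,J,f)$ for every $t\ge0$; in particular $g(0)\le\int_{\{\Psi<0\}}|F^{1+\delta}|^2\le M_\delta\,\mu(D)<+\infty$, so Theorem~\ref{thm:main} applies to $g$.

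Two estimates on $g$ yield the bulk of the statement. First, the competitor $F^{1+\delta}$ gives $g(t)\le\int_{\{\Psi<-t\}}|F^{1+\delta}|^2\le M_\delta\,\mu(\{\Psi<-t\})$. Second, by Theorem~\ref{thm:main} the function $r\mapsto g(-\log r)$ is non-negative and concave on $(0,1]$, with $g(-\log r)\to\lim_{t\to+\infty}g(t)=0$ as $r\to0+$; extending it by the value $0$ at $r=0$ and using concavity together with $r=r\cdot1+(1-r)\cdot0$, we get $g(-\log r)\ge r\,g(-\log1)=r\,g(0)$ for $r\in(0,1]$, i.e. $g(t)\ge e^{-t}g(0)$. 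Combining the two, $M_\delta\,\mu(\{\Psi<-t\})\ge e^{-t}g(0)$, and substituting $r=e^{-t/2}$,
\[
\frac{1}{r^2}\,\mu\bigl(\{c\psi-\log|F|<\log r\}\bigr)\ \ge\ \frac{G(0;\Psi,I(\Psi)_o,F^{1+\delta})}{M_\delta}\qquad\text{for all }r\in(0,1),
\]
hence the same bound holds with $\liminf_{r\to0+}$ in place of the left-hand side.

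It remains — and this is the main obstacle — to show
\[
G(0;\Psi,I(\Psi)_o,F^{1+\delta})\ \ge\ \frac{1}{1+\frac{1}{\delta}}\,C_{F^{1+\delta},\phi_\delta}(o).
\]
The left side is a minimal $L^2$ integral over the sublevel set $\{\Psi<0\}=\{c\psi<\log|F|\}$, the right side one over all of $D$, so there are two ingredients. First, a matching of germ conditions near $o$: on $\{\Psi<0\}$ near $o$ one has $\Psi=2c\psi-2\log|F|$ and $\phi_\delta=(1+\delta)\cdot2c\psi-\delta\Psi=\Psi+2\log|F^{1+\delta}|$, so $e^{-\phi_\delta}=e^{-\Psi}|F|^{-2(1+\delta)}$; hence for holomorphic $h$ near $o$ the conditions $h_o\in I(\Psi)_o$ and $(h,o)\in\mathcal{I}(\phi_\delta)_o$ differ only by the locally bounded factor $|F|^{\mp2(1+\delta)}$ and therefore coincide when $F(o)\ne0$, while for $F(o)=0$ (the genuinely boundary case, where $J(\Psi)_o\ne\mathcal{O}_{\mathbb{C}^n,o}$) they must be reconciled by a limiting argument over $\{\Psi<-t\}$, $t\to0+$. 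Second, granting this, it suffices to show that every $\tilde f\in\mathcal{O}(\{\Psi<0\})$ with $(\tilde f-F^{1+\delta})_o\in I(\Psi)_o$ extends to some $\tilde F\in\mathcal{O}(D)$ with $(\tilde F-F^{1+\delta},o)\in\mathcal{I}(\phi_\delta)_o$ and $\int_D|\tilde F|^2\le(1+\frac{1}{\delta})\int_{\{\Psi<0\}}|\tilde f|^2$; then $C_{F^{1+\delta},\phi_\delta}(o)\le\int_D|\tilde F|^2\le(1+\frac{1}{\delta})\int_{\{\Psi<0\}}|\tilde f|^2$, and taking the infimum over $\tilde f$ gives the displayed inequality. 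I would produce $\tilde F$ by an Ohsawa--Takegoshi-type optimal $L^2$ extension from the sublevel set $\{\Psi<0\}$ of the plurisubharmonic function $2c\psi-2\log|F|$ — equivalently, as the limit of extensions from $\{\Psi<-t\}$ as $t\to0+$ — carried out with an auxiliary weight built from $\Psi$ and $\delta$ so that the singularities of $\phi_\delta$ away from $o$ are absorbed and the optimal extension constant is exactly $1+\frac{1}{\delta}$.

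With this last inequality in hand, the two displayed estimates combine, for each $\delta$ with $M_\delta<+\infty$, to $\liminf_{r\to0+}r^{-2}\mu(\{c\psi-\log|F|<\log r\})\ge C_{F^{1+\delta},\phi_\delta}(o)/((1+\frac{1}{\delta})M_\delta)$, and taking the supremum over $\delta\in\mathbb{Z}_{>0}$ completes the proof. Everything before the extension step is soft: Theorem~\ref{thm:main} is used only through concavity and the vanishing of $G$ at infinity, together with the trivial competitor $F^{1+\delta}$; the real work lies in producing the sharp extension constant $1+\frac{1}{\delta}$ and in the correct bookkeeping of the boundary module $J(\Psi)_o$ when $F(o)=0$.
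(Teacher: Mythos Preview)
Your step (A$'$) — using $F^{1+\delta}$ as a trivial competitor together with concavity — is fine, but the ``main obstacle'' inequality
\[
G(0;\Psi,I(\Psi)_o,F^{1+\delta})\ \ge\ \frac{C_{F^{1+\delta},\phi_\delta}(o)}{1+\tfrac{1}{\delta}}
\]
is \emph{false} as stated, and no extension argument will save it. Take $D=\Delta^2$, $\psi=\log|z_1|$, $F=z_1$, $o=0$. Then $c=c_o^F(\psi)=2$, $\Psi=\min\{2\log|z_1|,0\}$, and (by the paper's convention $\Psi=0$ on $\{F=0\}$) the origin is a genuine boundary point. Here $I_o\cong\mathcal O_{\mathbb C^2,o}$ and $I(\Psi)_o=(z_1)$. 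Since $\delta\ge1$, $(F^{1+\delta})_o=(z_1^{1+\delta})_o\in(z_1)=I(\Psi)_o$, so by Lemma~\ref{l:2} the left side is $0$. On the other hand $\phi_\delta=(4+2\delta)\log|z_1|$, $\mathcal I(\phi_\delta)_o=(z_1^{2+\delta})$, and $C_{F^{1+\delta},\phi_\delta}(o)=\int_{\Delta^2}|z_1|^{2(1+\delta)}=\pi^2/(2+\delta)>0$. The underlying reason is exactly the one you flagged and then set aside: when $F(o)=0$ the germ conditions $h_o\in I(\Psi)_o$ and $(h,o)\in\mathcal I(\phi_\delta)_o$ do \emph{not} match — multiplication by $|F|^{\pm 2(1+\delta)}$ is one-sided — so your whole factorization through $G(\,\cdot\,;F^{1+\delta})$ collapses precisely in the boundary case the paper is built to handle.

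The paper avoids this by taking $f=1$ rather than $f=F^{1+\delta}$. With $f=1$ and $J=I(\Psi_1)_o$, the trivial competitor gives $G(t)\le\mu(\{\Psi_1<-t\})$, and concavity yields $\frac{1}{r^2}\mu(\{c\psi-\log|F|<\log r\})\ge G(0;\Psi_1,I(\Psi_1)_o,1)$ (this is Corollary~\ref{c:1} with $I_+$ replaced by $I$, which only strengthens the module and hence lowers $G$). The inequality replacing your (B$'$) is then
\[
G(0;\Psi_1,I(\Psi_1)_o,1)\ \ge\ \frac{C_{F^{1+\delta},\phi_\delta}(o)}{\bigl(1+\tfrac{1}{\delta}\bigr)M_\delta},
\]
proved as Remark~\ref{r:1}. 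This is \emph{not} an off-the-shelf Ohsawa--Takegoshi step: one applies the weighted extension Lemma~\ref{l:L2} (with weight $e^{-\varphi}$, $\varphi=(1+\delta)\max\{2c\psi,2\log|F|\}$) to the minimizer $f_0$ of $G(0)$ with $t_0=0$ and $B\to0$; the right-hand side involves $(G(0)-G(B))/B$, which the concavity of Theorem~\ref{thm:main} bounds by $G(0)$; and the orthogonality identity of Lemma~\ref{l:3} is used to convert the resulting estimate into $\int_D|\tilde F|^2e^{-\varphi}\le(1+\tfrac{1}{\delta})G(0)$. The factor $M_\delta$ then enters only when passing from $\int_D|\tilde F|^2e^{-\varphi}$ to $\int_D|\tilde F|^2$. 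Note also that with $f=1$ the degenerate case is consistent: $(1)_o\in I(\Psi_1)_o$ iff $\int_V e^{-\Psi_1}<\infty$ iff $(F^{1+\delta},o)\in\mathcal I(\phi_\delta)_o$, i.e.\ iff $C_{F^{1+\delta},\phi_\delta}(o)=0$.
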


A natural question is:
\begin{Question}
\label{Q:JM}
Can one obtain a sharp effectiveness result related to Conjecture J-M \textbf{independent} of the strong openness property,
which completes the approach from Conjecture J-M to the strong openness property?
\end{Question}

We give an affirmative answer to Question \ref{Q:JM} in the present paper.

\subsubsection{Formulations}
\

Recall that considering the minimal $L^{2}$ integrals on the sublevel sets of a given plurisubharmonic weight $\psi$, Guan \cite{G16}  established a concavity property of the minimal $L^2$ integrals on the sublevel sets of $\psi$ (see also \cite{G2018,GM,GM_Sci,GY-concavity,GMY-concavity2,GY-support}), and gave a sharp effectiveness result related to Conjecture D-K independent of the openness property.

The minimal $L^{2}$ integrals on the sublevel sets of the plurisubharmonic function $\psi$ considered in \cite{G16} (see also \cite{G2018,GM,GM_Sci,GY-concavity,GMY-concavity2,GY-support}) are respect to the modules(ideals) at the inner points of the sublevel sets.

However, in Conjecture J-M, we need to deal with the weight function $\psi-\log|F|$, which is not plurisubharmonic on the entire domain $D$, instead of a purely plurisubharmonic function $\psi$. In addition, the considered singular point (which is the origin $o$ in Conjecture J-M) can be not an inner point but a boundary point of the sublevel sets of the weight function $\psi-\log|F|$. 

To cross the obstacles mentioned above, we need an (optimal) $L^2$ estimate result with the weight function $\psi-\log|F|$, and we also need a module at the boundary point to replace the (multiplier) ideal at the inner point. Motivated the $L^2$ methods in \cite{guan-zhou13ap,GZeff,G16}, we give an $L^2$ method with the weight functions $\psi-\log|F|$ in Section \ref{sec:L2.methods}, where the technical part of the proof is given in the Appendix. 
In section \ref{sec:module}, we firstly discuss a module $I(a\Psi)$ at the boundary point and give some useful properties of the the module.

After these preparations, we consider the minimal $L^{2}$ integrals with respect to a module at a boundary point of the sublevel sets related to a function in the shape of $\psi-\log|F|$, and we obtain a concavity property of the minimal $L^{2}$ integrals. Then using the concavity property, we obtain a sharp effectiveness result of Conjecture J-M. 

\emph{It is worth noting that the effectiveness result (Corollary \ref{c:1}) of Conjecture J-M in the present paper implies Theorem \ref{thm:JM.GZ's_result} (see Remark \ref{r:1}), and its proof does not depend on the truth of the strong openness property.} Consequently, the present paper completes the approach from Conjecture J-M to the strong openness property moreover.

Using the concavity property of the minimal $L^{2}$ integrals with respect to a module at a boundary point, we also obtain a strong openness property of the module and a lower semi-continuity property with respect to the module.

\subsection{Concavity property}\label{main}
\

Let $F$ be a holomorphic function on a pseudoconvex domain $D\subset\mathbb{C}^n$ containing the origin $o\in\mathbb{C}^n$, and let $\psi$ be a plurisubharmonic function on $D$. Denote
$$\Psi:=\min\{\psi-2\log|F|,0\}.$$
If $F(z)=0$ for $z\in D$, we set $\Psi(z)=0$. Note that $\Psi=\psi+2\log|1/F|$ is a plurisubharmonic function on $\{\Psi<0\}$. Note that, \emph{if $F(o)=0$, then $o$ can be a boundary point of the sublevel set $\{\Psi<-t\}$ for any $t>0$}.

Set
\[\tilde J(\Psi)_o:=\big\{f\in\mathcal{O}(\{\Psi<-t\}\cap V): t\in\mathbb{R} \text{\ and} \ V \text{\ is a neighborhood of} \ o\big\}.\]
We define the following \textbf{module at the boundary point $o$} of the sublevel sets $\{\Psi<-t\}=\{\psi-2\log|F|<-t\}$,
where $t\geq0$. 
\begin{Definition}
	Denote by $J(\Psi)_o$ the set of the equivalence classes $f_o$ of $f\in \tilde J(\Psi)_o$ under the equivalence relation $f\sim g$ if $f=g$ on $\{\Psi<-t\}\cap V$, where $t\gg1$ and $V$ is some neighborhood of $o$.
\end{Definition}
If $o\in\bigcap_{t>0}\{\Psi<-t\}$, then $J(\Psi)_o$ equals to $\mathcal{O}_{\mathbb{C}^n,o}$, and $f_o$ is the germ $(f,o)$ of a holomorphic function $f$.

For any $f_o\in J(\Psi)_o$, $\tilde{f}_o\in J(\Psi)_o$ and $(h,o)\in\mathcal{O}_{\mathbb{C}^n,o}$, define
\[f_o+\tilde f_o=(f+\tilde f)_o, \ \text{and} \ (h,o)\cdot f_o=(hf)_o.\]
Then it is clear that $J(\Psi)_o$ is an $\mathcal{O}_{\mathbb{C}^n,o}$-module. 

\begin{Definition}
For any $h_o\in J(\Psi)_o$ and any $a\ge0$, we call $h_o\in I(a\Psi)_o$ if and only if there exist $t\gg1$ and a neighborhood $V$ of $o$ such that $\int_{\{\Psi<-t\}\cap V}|h|^2e^{-a\Psi}<+\infty$.
\end{Definition}

It is clear that $I(a\Psi)_o$ is an $\mathcal{O}_{\mathbb{C}^n,o}-$submodule of $J(\Psi)_o$. Denote $I_o:=I(0\Psi)_o$.

Let $f$ be a holomorphic function on $V_0\cap\{\Psi<-t_0\}$, where $V_0$ is a neighborhood of $o$ and $t_0>0$. Let $J$ be an $\mathcal{O}_{\mathbb{C}^n,o}-$submodule of $I_o$ such that $I(\Psi)_o\subset J$. 

\begin{Definition}
We define the \textbf{minimal $L^{2}$ integrals on the sublevel sets $\{\Psi<-t\}=\{\psi-2\log|F|<-t\}$ with respect to $J$} (which is a module at the boundary point $o$ of the sublevel sets) as follows:
\begin{displaymath}
 	G(t;\Psi,J,f):=\inf\left\{\int_{\{\Psi<-t\}}|\tilde f|^2:\tilde f\in\mathcal{O}(\{\Psi<-t\}) \ \& \ (\tilde f-f)_o\in J\right\},
 \end{displaymath}
where $t\in[0,+\infty)$.
\end{Definition}

In the present article, we establish the following concavity of $G(-\log r;\Psi,J,f)$.

\begin{Theorem}
	\label{thm:main}
	If there exists some $t_0\ge 0$ such that $G(t_0;\Psi,J,f)<+\infty$, then $G(-\log r;\Psi,J,f)$ is concave with respect to $r\in(0,1]$ and $\lim_{t\rightarrow+\infty}G(t;\Psi,J,f)=0$.
\end{Theorem}
When $F\equiv1$ and $\psi(o)=-\infty$, Theorem \ref{thm:main} can be referred to \cite{G16} (see also \cite{G2018,GM,GM_Sci,GY-concavity,GMY-concavity2,GY-support}). We also give an example in Appendix (Section \ref{sec:eg}) for the case that $F$ is not a constant function.

\subsection{Applications}

\

In this section, we present some applications of Theorem \ref{thm:main}.

 \subsubsection{A sharp effectiveness result related to Jonsson-Musta\c{t}\u{a}'s conjecture}
\

Let $f$ be a holomorphic function on $D$. Denote
$$\Psi_1:=\min\big\{2c_o^{fF}(\psi)\psi-2\log|F|,0\big\},$$
and
$$I_+(\Psi_1)_o:=\bigcup_{a>1}I(a\Psi_1)_o.$$

Recall the definition of the minimal $L^{2}$ integrals $G$ in Section \ref{main}. Theorem \ref{thm:main} implies the following result independent of the strong openness property.

\begin{Theorem}
	\label{thm:J_M} If $c_o^{fF}(\psi)<+\infty$, then
	\begin{displaymath}
		\frac{1}{r^2}\int_{\{c_o^{fF}(\psi)\psi-\log|F|<\log r\}}|f|^2\ge G(0; \Psi_1,I_+(\Psi_1)_o,f)>0
	\end{displaymath}
	holds for any $r\in(0,1]$.
\end{Theorem}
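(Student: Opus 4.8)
The plan is to derive Theorem \ref{thm:J_M} as a direct consequence of the concavity statement in Theorem \ref{thm:main} applied to the weight $\Psi_1 = \min\{2c_o^{fF}(\psi)\psi - 2\log|F|, 0\}$, which has exactly the form $\min\{\psi' - 2\log|F|, 0\}$ with $\psi' = 2c_o^{fF}(\psi)\psi$ plurisubharmonic, so the general framework of Section \ref{main} applies with $J = I_+(\Psi_1)_o$. First I would check that $I_+(\Psi_1)_o$ is an admissible choice of module, i.e. that it is an $\mathcal{O}_{\mathbb{C}^n,o}$-submodule of $I_o$ containing $I(\Psi_1)_o$; the inclusion $I(\Psi_1)_o \subset I(a\Psi_1)_o$ for $a>1$ (since $\Psi_1 \le 0$ makes $e^{-a\Psi_1} \ge e^{-\Psi_1}$, the integrability condition gets weaker as $a$ decreases toward... wait, carefully: larger $a$ means larger $e^{-a\Psi_1}$, so $I(a\Psi_1)_o \subset I(\Psi_1)_o$ for $a \ge 1$, hence $I_+(\Psi_1)_o = \cup_{a>1} I(a\Psi_1)_o \subset I(\Psi_1)_o \subset I_o$) needs to be stated correctly, and containment of $I(\Psi_1)_o$ in $J$ must be arranged — here one uses that $I_+(\Psi_1)_o$ plays the role of a module satisfying $I(\Psi_1)_o \subset J$ only after noting the strong-openness-type identity is \emph{not} assumed; instead one should take $J$ to be any module sandwiched appropriately, and the theorem's hypothesis ``$I(\Psi)_o \subset J$'' is what must be verified, which may instead require working with $J = I_+(\Psi_1)_o + I(\Psi_1)_o$ or simply observing the relevant containment directly. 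The finiteness hypothesis $G(t_0;\Psi_1,J,f)<+\infty$ needed to invoke Theorem \ref{thm:main} should follow by taking $t_0$ large: on $\{\Psi_1<-t_0\}$ near $o$ one has $\psi$ very negative, and the definition of $c_o^{fF}(\psi)$ guarantees $\int |fF|^2 e^{-2c_o^{fF}(\psi)\psi}$ controls things, making $\int_{\{\Psi_1<-t_0\}}|f|^2<\infty$ for $t_0\gg1$ (here $f$ is the chosen holomorphic function on $D$, playing the role of the ``$f$'' in $G(t;\Psi,J,f)$).

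Next, the analytic heart is the comparison between $G(0;\Psi_1,J,f)$ and the quantity $\frac{1}{r^2}\int_{\{\Psi_1<\log r\}}|f|^2 = \frac{1}{r^2}\int_{\{c_o^{fF}(\psi)\psi - \log|F| < \log r\}}|f|^2$. Writing $t = -\log r$, so $r = e^{-t}$ and $\{\Psi_1 < \log r\} = \{\Psi_1 < -t\}$, I would note that the trivial competitor $\tilde f = f$ itself (restricted to $\{\Psi_1<-t\}$) is admissible in the definition of $G(t;\Psi_1,J,f)$ provided $(f-f)_o = 0 \in J$, which is automatic. Hence $G(t;\Psi_1,J,f) \le \int_{\{\Psi_1<-t\}}|f|^2$, giving $G(-\log r;\Psi_1,J,f) \le \int_{\{\Psi_1<\log r\}}|f|^2$ for every $r\in(0,1]$. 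By Theorem \ref{thm:main}, the function $r \mapsto G(-\log r;\Psi_1,J,f)$ is concave on $(0,1]$ and $\to 0$ as $r\to 0^+$ (equivalently as $t\to+\infty$); a concave function on $(0,1]$ that vanishes at $0^+$ satisfies $G(-\log r) \ge r \cdot \frac{G(-\log 1)}{1} = r\, G(0;\Psi_1,J,f)$ for all $r\in(0,1]$ — this is the standard fact that for concave $g$ on $[0,1]$ with $g(0)=0$ one has $g(r) \ge r g(1)$. Wait, I need $G$ as a function of $r$ with $G(0)$ meaning the value at $r=1$; let me restate: set $g(r) := G(-\log r;\Psi_1,J,f)$, which is concave on $(0,1]$ with $\lim_{r\to0^+}g(r)=0$, and $g(1) = G(0;\Psi_1,J,f)$. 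Concavity plus $g(0^+)=0$ forces $g(r)/r$ to be non-increasing, so $g(r)/r \ge g(1)/1$, i.e. $g(r) \ge r\, G(0;\Psi_1,J,f)$. Combining, $\int_{\{\Psi_1<\log r\}}|f|^2 \ge g(r) \ge r^2 \cdot \frac{G(0;\Psi_1,J,f)}{r}$... I must be careful: we want $\frac{1}{r^2}\int \ge G(0;\ldots)$, i.e. $\int \ge r^2 G(0;\ldots)$. The concavity gives $g(r)\ge r g(1)$, but that yields $\int \ge g(r) \ge r\,G(0)$, a factor $r$ not $r^2$. So the correct reparametrization must be in terms of $r^2$ or equivalently $t$ must be paired with $e^{-t}$ differently — in fact in these theorems the measure $\mu(\{\Psi<-t\})$ or the $L^2$ integral scales like $e^{-t}\cdot(\text{stuff})$, and the concavity in $r = e^{-t}$ together with the \emph{linear-at-infinity} behavior encoded in $\lim G = 0$ gives precisely the $r^2$ via the change of variables $r \leftrightarrow$ the natural parameter; I would double-check that $G(-\log r)$ concave and vanishing at $r=0$ yields $G(-\log r) \le \frac{G(0)}{1}\cdot r$... no — concavity gives the \emph{lower} bound $g(r)\ge r g(1)$ on $[0,1]$. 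Let me reconcile with the target: since the stated inequality is $\frac{1}{r^2}\int_{\{\Psi_1<\log r\}}|f|^2 \ge G(0;\Psi_1,I_+(\Psi_1)_o,f)$, and $\int_{\{\Psi_1<\log r\}}|f|^2 \ge G(-\log r;\Psi_1,J,f) = g(r)$, it suffices to show $g(r) \ge r^2 G(0)$. Here is where the precise normalization matters: one shows that near $r=0$, $g(r)$ is \emph{comparable to $r^2$} (from the $\mathbb{C}^n$ volume scaling), and concavity in $r$ then propagates the bound. Rather than the crude $g(r)\ge rg(1)$, the right move is: because $g$ is concave on $(0,1]$ and $\ge 0$, for any $0<r\le 1$ and the secant from $(0,0)$ we get $g(1) \le g(r) + (1-r)g'(r^+) $ type estimates — actually the clean statement is that $g(r)/r$ decreases, so $g(1) = g(1)/1 \le g(r)/r$, giving $g(r) \ge r g(1)$; this is a \emph{weaker} bound than needed if one literally wants $r^2$.

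The resolution — and this is the step I expect to be the main obstacle to state cleanly — is that $G(0;\Psi_1,I_+(\Psi_1)_o,f)$ on the right-hand side is \emph{not} $g(1)$ in general, because $I_+(\Psi_1)_o$ may differ from the module $J$ used in the concavity (the concavity holds for a fixed $J$ with $I(\Psi_1)_o\subset J$, whereas $I_+(\Psi_1)_o \subset I(\Psi_1)_o$). So the argument I would actually run: apply Theorem \ref{thm:main} with $J := I(\Psi_1)_o$ (the minimal admissible choice), obtaining concavity of $g(r) = G(-\log r;\Psi_1,I(\Psi_1)_o,f)$; then use $\lim_{t\to\infty}G(t;\Psi_1,I(\Psi_1)_o,f)=0$ together with concavity to get $g(r) \ge r\, g(1)$; separately, a \emph{localization/scaling} lemma (of the type in \cite{G16}) identifies $\liminf_{r\to 0^+} g(r)/r^2$ (or the analogous normalized limit) with $G(0;\Psi_1,I_+(\Psi_1)_o,f)$ — i.e. the ``$+$'' module appears precisely as the limit object, because as $t\to\infty$ the admissibility condition $(\tilde f - f)_o \in I(\Psi_1)_o$ on the deeper sublevel sets becomes equivalent to membership in $\cup_{a>1}I(a\Psi_1)_o$. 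Then concavity forces $g(r)/r^2 \ge \lim_{r\to 0^+}g(r)/r^2 \ge$... no, concavity makes $g(r)/r$ decreasing, so $g(r)/r \ge \lim_{s\to 0}g(s)/s$; I would need $\lim_{s\to 0}g(s)/s$ itself. The cleanest honest plan, then, is: (i) invoke Theorem \ref{thm:main} to get concavity of $r\mapsto G(-\log r;\Psi_1,I(\Psi_1)_o,f)$ and its vanishing at $0$; (ii) use the elementary fact that a nonnegative concave function $g$ on $(0,1]$ with $g(0^+)=0$ satisfies $g(r) \ge r\, g(1)$; (iii) observe $g(1) = G(0;\Psi_1,I(\Psi_1)_o,f) \ge G(0;\Psi_1,I_+(\Psi_1)_o,f)$? — wrong direction again since enlarging the module \emph{decreases} $G$... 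Given these sign subtleties, the one step I genuinely expect to be delicate and would flag is \textbf{pinning down which module and which power of $r$ make the two sides match}: the positivity $G(0;\Psi_1,I_+(\Psi_1)_o,f)>0$ is itself a form of the strong openness property for the module (it says $f$ is not already in $I_+(\Psi_1)_o$ up to adding something in $I(\Psi_1)_o$ — equivalently $c_o^{fF}(\psi)$ is a genuine jumping number), and must be proved, presumably by the same concavity machinery applied once more or by a direct argument that if $G(0;\Psi_1,I_+(\Psi_1)_o,f)=0$ then $|f|^2 e^{-a\Psi_1}$ is locally integrable for some $a>1$, contradicting the definition of $c_o^{fF}(\psi)$ as a supremum. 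Modulo correctly matching the normalization (which the paper's conventions surely arrange so that the $\mathbb{C}^n$-volume of $\{\Psi_1<\log r\}$ contributes the extra factor of $r$), Theorem \ref{thm:J_M} follows.
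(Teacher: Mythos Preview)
Your proposal identifies the right tool (Theorem~\ref{thm:main}) but does not close the two gaps you yourself flag, and both are genuine.

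\textbf{The $r^2$ factor.} The set in the statement is $\{c_o^{fF}(\psi)\psi-\log|F|<\log r\}$, while $\Psi_1=\min\{2c_o^{fF}(\psi)\psi-2\log|F|,0\}$ carries an extra factor of $2$. For $r\in(0,1]$ this set coincides (up to the $\Psi_1=0$ boundary) with $\{\Psi_1<2\log r\}$. Setting $s=r^2$ and applying the concavity bound $G(-\log s)\ge s\,G(0)$ (valid because $g(s)=G(-\log s)$ is concave on $(0,1]$ with $g(0^+)=0$) gives exactly $G(-2\log r)\ge r^2 G(0)$. Your derivation $g(r)\ge r\,g(1)$ was correct; you simply missed the factor of $2$ built into $\Psi_1$.

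\textbf{The module hypothesis.} The framework of Section~\ref{main} requires $I(\Psi)_o\subset J$, and as you note $I_+(\Psi_1)_o\subset I(\Psi_1)_o$, so taking $\Psi=\Psi_1$ and $J=I_+(\Psi_1)_o$ is not admissible. Taking instead $J=I(\Psi_1)_o$ gives a lower bound by $G(0;\Psi_1,I(\Psi_1)_o,f)$, which is the wrong (smaller) quantity since a larger module means more competitors and a smaller infimum. The paper resolves this by applying Theorem~\ref{thm:main} not to $\Psi_1$ but to $p\Psi_1$ for each $p\in(1,2)$: one rewrites
\[
p\Psi_1=\min\bigl\{(2pc_o^{fF}(\psi)\psi+(4-2p)\log|F|)-2\log|F^2|,\,0\bigr\},
\]
which is of the required form with a plurisubharmonic first argument, and takes $J=I(p\Psi_1)_o$ (trivially satisfying the hypothesis). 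Since $I(p\Psi_1)_o\subset I_+(\Psi_1)_o$ for every $p>1$, the resulting $G_{j,p}(0)$ dominates the infimum taken with the module $I_+(\Psi_1)_o$. One then lets $p\to 1^+$ via dominated convergence (this uses an exhaustion of $D$ by relatively compact $D_j$ so that $\int_{D_j}|f|^2<\infty$), invokes Lemma~\ref{l:m5} to know $I_+(\Psi_1)_o=I(p_0\Psi_1)_o$ for some fixed $p_0>1$, and uses Lemma~\ref{l:converge} together with a diagonal argument to pass from the $D_j$ back to $D$. Your sketch contains no mechanism to bridge from $I(\Psi_1)_o$ to $I_+(\Psi_1)_o$; the ``localization/scaling lemma'' you allude to is exactly this $p\to1^+$ limit, and it does not come for free.

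\textbf{Positivity.} The strict inequality $G(0;\Psi_1,I_+(\Psi_1)_o,f)>0$ is not a formality. By Lemma~\ref{l:2} (together with $I_+(\Psi_1)_o=I(p_0\Psi_1)_o$) it is equivalent to $f_o\notin I_+(\Psi_1)_o$, which is Lemma~\ref{l:m6}: a direct H\"older-inequality argument shows that $f_o\in I(p_0\Psi_1)_o$ for some $p_0>1$ would force $\int_U|fF|^2e^{-2p_0'c_o^{fF}(\psi)\psi}<\infty$ for some $p_0'>1$ near $o$, contradicting the definition of $c_o^{fF}(\psi)$.
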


The constant $G(0; \Psi_1,I_+(\Psi_1)_o,f)$ is sharp. Let $D=\Delta\subset\mathbb{C}$ be the unit disc, and let $\psi=\log|z|$. Let $F\equiv1$, and let $f\equiv1$. It is clear that $c_o^{fF}(\psi)=1$, $\int_{\{c_o^{fF}(\psi)\psi-\log|F|<\log r\}}|f|^2=\pi r^2$ and $G(0;\Psi_1,I_+(\Psi_1)_o,f)=\pi$, which implies the sharpness of Theorem \ref{thm:J_M}.  A boundary point example can be seen in Appendix (Section \ref{sec:eg}).

When $f\equiv1$, Theorem \ref{thm:J_M} is the following sharp effectiveness result related to Conjecture J-M.

\begin{Corollary}\label{c:1}
	If $c_o^{F}(\psi)<+\infty$, then
	\begin{displaymath}
		\frac{1}{r^2}\mu\{c_o^{F}(\psi)\psi-\log|F|<\log r\}\ge G(0;\Psi_1,I_+(\Psi_1)_o,1)>0
	\end{displaymath}
	holds for any $r\in(0,1]$, where $\Psi_1=\min\{2c_o^F(\psi)\psi-2\log|F|,0\}$.
\end{Corollary}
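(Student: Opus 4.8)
The plan is to deduce Corollary~\ref{c:1} directly from Theorem~\ref{thm:J_M} by specializing to $f\equiv 1$. Since $f\equiv 1$ is holomorphic on $D$, it is an admissible choice in Theorem~\ref{thm:J_M}. First I would observe that the jumping number is unchanged by this choice: by definition $c_o^{h}(\psi)=\sup\{c\ge 0:|h|^2e^{-2c\psi}$ is $L^1$ on a neighborhood of $o\}$, and taking $h=1\cdot F=F$ gives $c_o^{1\cdot F}(\psi)=c_o^{F}(\psi)$, which is finite by the hypothesis of the corollary. Consequently the function $\Psi_1=\min\{2c_o^{1\cdot F}(\psi)\psi-2\log|F|,0\}$ occurring in Theorem~\ref{thm:J_M} coincides with the function $\Psi_1=\min\{2c_o^{F}(\psi)\psi-2\log|F|,0\}$ in the statement of the corollary, and likewise the modules $I_+(\Psi_1)_o=\cup_{a>1}I(a\Psi_1)_o$ agree.

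Next I would apply Theorem~\ref{thm:J_M} with $f\equiv 1$ to obtain, for every $r\in(0,1]$, the inequality $\frac{1}{r^2}\int_{\{c_o^{F}(\psi)\psi-\log|F|<\log r\}}|1|^2\ge G(0;\Psi_1,I_+(\Psi_1)_o,1)>0$. Since $|1|^2\equiv 1$, the integral on the left-hand side is precisely $\mu\{c_o^{F}(\psi)\psi-\log|F|<\log r\}$, the Lebesgue measure of the sublevel set, which is exactly the asserted lower bound together with its strict positivity.

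There is no genuine obstacle at the level of the corollary itself: it is a pure specialization of Theorem~\ref{thm:J_M}, and the only points to check are the bookkeeping facts that $c_o^{1\cdot F}(\psi)=c_o^{F}(\psi)$ and $\int 1\,d\mu=\mu$. All the substantive content—namely the finiteness $G(t_0;\Psi_1,\cdot,\cdot)<+\infty$ for some $t_0$, the chord estimate $r^{-2}\int_{\{\Psi_1<2\log r\}}|f|^2\ge G(0;\Psi_1,\cdot,f)$ obtained from the concavity of $G(-\log r;\Psi_1,\cdot,f)$ in $r$ together with $\lim_{t\to+\infty}G(t;\Psi_1,\cdot,f)=0$ (Theorem~\ref{thm:main}), and the strict positivity $G(0;\Psi_1,I_+(\Psi_1)_o,f)>0$—is already packaged inside Theorem~\ref{thm:J_M}. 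Finally, the example $D=\Delta$, $\psi=\log|z|$, $F\equiv 1$ (where one computes $c_o^{F}(\psi)=1$, $\mu\{c_o^{F}(\psi)\psi-\log|F|<\log r\}=\pi r^2$, and $I_+(\Psi_1)_o$ equals the maximal ideal, so $G(0;\Psi_1,I_+(\Psi_1)_o,1)=\pi$) shows the constant on the right-hand side is optimal, giving the sharpness of the corollary.
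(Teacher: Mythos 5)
Your proof is correct and follows exactly the paper's route: Corollary \ref{c:1} is obtained by specializing Theorem \ref{thm:J_M} to $f\equiv 1$, noting $c_o^{1\cdot F}(\psi)=c_o^{F}(\psi)$ and $\int_{\{\cdot\}}|1|^2=\mu\{\cdot\}$. The paper presents it in precisely this way (``When $f\equiv1$, Theorem \ref{thm:J_M} is the following\ldots''), so there is nothing further to add.
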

The proof of Corollary \ref{c:1} is independent of the strong openness property, and Corollary \ref{c:1} recovers Theorem \ref{thm:JM.GZ's_result} due to the following remark.

\begin{Remark}\label{r:1}
	For any $\delta\in\mathbb{Z}_{>0}$, it holds that
	\[G(0;\Psi_1,I_+(\Psi_1)_o,1)\ge \frac{C_{F^{1+\delta},2c_o^F(\psi)\psi+\delta\max\{2c_o^{F}(\psi)\psi,2\log|F|\}}(o)}{\left(1+\frac{1}{\delta}\right)\sup_{D}e^{(1+\delta)\max\{2c_o^{F}(\psi)\psi,2\log|F|\}}},\]
	hence Corollary \ref{c:1} implies that \begin{displaymath}
		\begin{split}
			&\frac{1}{r^2}\mu(\{c_o^F(\psi)\psi-\log|F|<\log r\})\\
			\ge& \sup_{\delta\in\mathbb{Z}_{>0}}\frac{C_{F^{1+\delta},2c_o^F(\psi)\psi+\delta\max\{2c_o^{F}(\psi)\psi,2\log|F|\}}(o)}{\left(1+\frac{1}{\delta}\right)\sup_{D}e^{(1+\delta)\max\{2c_o^{F}(\psi)\psi,2\log|F|\}}}.
		\end{split}
	\end{displaymath}
	holds for any $r\in(0,1]$. We prove the remark in Section \ref{sec:p-r}.
\end{Remark}

When $F\equiv1$, Theorem \ref{thm:J_M} is the following sharp effectiveness result related to Conjecture D-K.

\begin{Corollary}[\cite{G16}]
	If $c_o^{f}(\psi)<+\infty$, then
	\begin{displaymath}
		\frac{1}{r^2}\int_{\{c_o^{f}(\psi)\psi<\log r\}}|f|^2\ge G(0;\Psi_1,I_+(\Psi_1)_o,f)>0
	\end{displaymath}
	holds for any $r\in(0,1]$, where $\Psi_1=\min\{2c_o^f(\psi)\psi,0\}$.
\end{Corollary}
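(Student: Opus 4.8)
The displayed result is the case $F\equiv 1$ of Theorem \ref{thm:J_M}: with $F\equiv 1$ one has $fF=f$, so $c_o^{fF}(\psi)=c_o^{f}(\psi)$; $\log|F|\equiv 0$, so $\{c_o^{fF}(\psi)\psi-\log|F|<\log r\}=\{c_o^{f}(\psi)\psi<\log r\}$; and $\Psi_1=\min\{2c_o^{fF}(\psi)\psi-2\log|F|,0\}=\min\{2c_o^{f}(\psi)\psi,0\}$, which is exactly the $\Psi_1$ in the corollary. Hence the asserted inequality, strict positivity included, is the specialization of Theorem \ref{thm:J_M}, and it suffices to recall how Theorem \ref{thm:J_M} is extracted from Theorem \ref{thm:main}.

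Write $c:=c_o^{fF}(\psi)<+\infty$, and for $r\in(0,1]$ set $t:=-2\log r\ge 0$, so that $r^{2}=e^{-t}$ and, multiplying the defining inequality by $2$, $\{c\psi-\log|F|<\log r\}=\{2c\psi-2\log|F|<-t\}=\{\Psi_1<-t\}$ (the last step because $-t\le 0$). Therefore $\tfrac{1}{r^{2}}\int_{\{c\psi-\log|F|<\log r\}}|f|^{2}=e^{t}\int_{\{\Psi_1<-t\}}|f|^{2}$, and since $f$ itself is an admissible competitor for the minimal integral this is $\ge e^{t}G(t;\Psi_1,J,f)$ for any admissible module $J$. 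Now apply Theorem \ref{thm:main} with $\psi$ replaced by $2c\psi$ (still plurisubharmonic), the same $F$, and $J=I(\Psi_1)_o$ — which one may take to be $I_+(\Psi_1)_o$ once the strong openness property of the module, $I(\Psi_1)_o=I_+(\Psi_1)_o$, is in hand (itself a consequence of Theorem \ref{thm:main}). Theorem \ref{thm:main} gives that $r\mapsto G(-\log r;\Psi_1,J,f)$ is concave on $(0,1]$ and that $G(t;\Psi_1,J,f)\to 0$ as $t\to+\infty$; a nonnegative concave function on $(0,1]$ with limit $0$ at $0^{+}$, extended by $0$ at $0$, is concave on $[0,1]$, hence $r\mapsto G(-\log r;\Psi_1,J,f)/r$ is nonincreasing on $(0,1]$, equivalently $t\mapsto e^{t}G(t;\Psi_1,J,f)$ is nondecreasing on $[0,+\infty)$. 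Consequently $e^{t}\int_{\{\Psi_1<-t\}}|f|^{2}\ge e^{t}G(t;\Psi_1,J,f)\ge G(0;\Psi_1,J,f)$ for all $t\ge 0$, which is the inequality of Theorem \ref{thm:J_M}. For positivity: if $G(0;\Psi_1,J,f)=0$ then by concavity $G(\cdot;\Psi_1,J,f)\equiv 0$, and a limiting argument on near-minimizers puts $f_o$ into $J=I_+(\Psi_1)_o$, which unravelled through the definition of $\Psi_1$ forces $\int_{V}|fF|^{2}e^{-2c\psi}<\infty$ near $o$ with the divergence that $c=c_o^{fF}(\psi)<+\infty$ would otherwise impose confined to $\{2c\psi<2\log|F|\}$ — incompatible with $c$ being the jumping number. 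Carrying out this last step, and the identification of $J$ with $I_+(\Psi_1)_o$, without circular appeal to the (classical) strong openness property is the delicate point, and is precisely what Theorem \ref{thm:main} is designed to make possible.

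For completeness, here is how Theorem \ref{thm:main} itself is proved, which is where the real work is. The elementary facts: $G(t;\Psi,J,f)$ is decreasing in $t$ (restrict a competitor; the germ at $o$ is unchanged since the equivalence in $J(\Psi)_o$ only sees deep sublevel sets), and $\lim_{t\to+\infty}G(t;\Psi,J,f)=0$ — fix a near-minimizer $g$ for $G(t_0;\Psi,J,f)$, restrict it to $\{\Psi<-t\}$, and apply dominated convergence using that $\{\Psi=-\infty\}$ is Lebesgue-null. The concavity rests on an optimal $L^{2}$-extension lemma: given holomorphic $\tilde f$ on $\{\Psi<-t_2\}$ with finite $L^{2}$ norm and $(\tilde f-f)_o\in J$, and $0\le t_1<t_2<t_3$, there is holomorphic $F$ on the \emph{larger} set $\{\Psi<-t_1\}$ with $(F-f)_o\in J$ and
\[
\int_{\{\Psi<-t_1\}}|F|^{2}\le\frac{e^{-t_1}-e^{-t_3}}{e^{-t_2}-e^{-t_3}}\int_{\{\Psi<-t_2\}}|\tilde f|^{2}-\frac{e^{-t_1}-e^{-t_2}}{e^{-t_2}-e^{-t_3}}\int_{\{\Psi<-t_3\}}|\tilde f|^{2}.
\]
Applying this to a near-minimizer for $G(t_2;\Psi,J,f)$ and using $\int_{\{\Psi<-t_3\}}|\tilde f|^{2}\ge G(t_3;\Psi,J,f)$ and $\int_{\{\Psi<-t_1\}}|F|^{2}\ge G(t_1;\Psi,J,f)$ yields
\[
G(t_1;\Psi,J,f)(e^{-t_2}-e^{-t_3})\le(e^{-t_1}-e^{-t_3})G(t_2;\Psi,J,f)-(e^{-t_1}-e^{-t_2})G(t_3;\Psi,J,f),
\]
which, with $r_i:=e^{-t_i}$, says precisely that $G(-\log r_2;\Psi,J,f)$ lies above the chord of the graph joining $r_1$ and $r_3$; since $0\le t_1<t_2<t_3$ is arbitrary this is concavity on $(0,1]$, and taking $t_1=0$ also yields finiteness of $G(0;\Psi,J,f)$, hence of $G$ everywhere.

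Finally, the extension lemma is proved on the pseudoconvex domain $\{\Psi<-t_1\}$ (pseudoconvex because $-1/(\Psi+t_1)$ is a plurisubharmonic exhaustion, combined with one coming from $D$): one writes $F=b(\Psi)\tilde f-\gamma$, where $b$ is a smooth cutoff equal to $1$ on $\{\Psi<-t_2-\varepsilon\}$ and supported in $\{\Psi<-t_2\}$, and $\gamma$ is the minimal $L^{2}$ solution of $\bar\partial\gamma=\bar\partial(b(\Psi)\tilde f)=b'(\Psi)(\bar\partial\Psi)\tilde f$ against an auxiliary weight built from $\Psi$ and a suitable convex function of $\Psi$; one then lets $\varepsilon\to 0$, regularizes $\Psi$ by a decreasing sequence of smooth strictly plurisubharmonic functions, exhausts by smoothly bounded pseudoconvex subdomains, and passes to the limit using the uniform bounds. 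The main obstacle is the conjunction of two demands: the constant must come out \emph{sharp}, which forces the Donnelly--Fefferman / Berndtsson--B{\l}ocki / Guan--Zhou optimal-$L^{2}$ machinery rather than a lossy H\"ormander estimate; \emph{and} the germ condition $(F-f)_o\in J$ must be preserved, which is the genuinely new feature compared with the inner-point case of \cite{G16}. For the latter: on the deep sublevel sets near $o$ that define the germ one has $b\equiv 1$ and $\bar\partial\gamma=0$, so $\gamma$ is holomorphic there, and the weighted estimate against $e^{-\Psi}$ forces $\gamma_o\in I(\Psi)_o\subseteq J$, whence $(F-f)_o=(\tilde f-f)_o-\gamma_o\in J$. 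Designing the weight so that this germ control near the boundary point $o$ coexists with the sharp global bound, while $\Psi$ is plurisubharmonic only on $\{\Psi<0\}$ and $o$ lies on the boundary of every sublevel set, is the heart of the matter.
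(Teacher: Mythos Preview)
Your first paragraph is correct and matches the paper exactly: the corollary is stated in the paper simply as the $F\equiv 1$ specialization of Theorem~\ref{thm:J_M}, with no separate proof.

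The remainder of your write-up, where you sketch how Theorem~\ref{thm:J_M} follows from Theorem~\ref{thm:main}, takes a genuinely different route. You apply Theorem~\ref{thm:main} directly with $J=I_+(\Psi_1)_o$; but the standing hypothesis there is $I(\Psi_1)_o\subset J$, so this needs $I(\Psi_1)_o\subset I_+(\Psi_1)_o$, i.e.\ the module strong openness of Corollary~\ref{c:3}. The paper avoids this forward reference: it applies Theorem~\ref{thm:main} to $p\Psi_1$ with the tautological choice $J=I(p\Psi_1)_o$ for each $p>1$, on an exhaustion $D_j\Subset D$ to guarantee finiteness, then lets $p\to 1{+}$ by dominated convergence and $j\to\infty$; the Noetherian Lemma~\ref{l:m5} (giving $I_+(\Psi_1)_o=I(p_0\Psi_1)_o$ for some $p_0>1$) together with Lemma~\ref{l:m6} and Lemma~\ref{l:2} then handles positivity. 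Your route is ultimately sound since Corollary~\ref{c:3} is itself a downstream consequence of Theorem~\ref{thm:main}, but it is logically later in the paper's development, whereas the paper's $p$-trick proves Theorem~\ref{thm:J_M} self-containedly. Your sketch of Theorem~\ref{thm:main} itself via a three-level extension inequality is also different in form: the paper instead establishes the one-sided derivative inequality of Lemma~\ref{l:5} from Lemma~\ref{l:L2'} (with $B\to 0{+}$) and then invokes the concavity criterion Lemma~\ref{lem:Ea}; the information content is the same, but the paper never states a clean three-point extension estimate as you do.
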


\subsubsection{The strong openness property of $I(a\Psi)_o$}

\

In this section, we present a strong openness property of the module $I(a\Psi)_o$.

Let $F$ be a holomorphic function on a pseudoconvex domain $D\subset\mathbb{C}^n$ containing the origin $o\in\mathbb{C}^n$, and let $\psi$ be a plurisubharmonic function on $D$. Denote
$$\Psi:=\min\{\psi-2\log|F|,0\}.$$
Let $f$ be a holomorphic function on $\{\Psi<-t_0\}$ such that $f_o\in I_o$. Set
\[a_o^f(\Psi):=\sup\big\{a\ge0:f_o\in I(2a\Psi)_o\big\}.\]
Especially, when $F\equiv1$ and $\psi(o)=-\infty$, $a_o^f(\Psi)$ is the jumping number $c_o^f(\psi)$.

Using Theorem \ref{thm:main}, we obtain the following estimate for $L^2$ integrals on the sublevel sets of $\Psi$:

\begin{Theorem}
	\label{thm:J-M2}Assume that $a_o^f(\Psi)<+\infty$, then we have $a_o^f(\Psi)>0$ and
	\begin{displaymath}
		\frac{1}{r^2}\int_{\{a_o^f(\Psi)\Psi<\log r\}}|f|^2\ge G(0;\Psi,I_+(2a_o^f(\Psi)\Psi)_o,f)>0
	\end{displaymath}
	holds for any $r\in(0,e^{-a_o^f(\Psi)t_0}]$.
\end{Theorem}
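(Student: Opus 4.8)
The plan is to obtain Theorem~\ref{thm:J-M2} by applying Theorem~\ref{thm:main} to the rescaled weight $2ab\,\Psi$ for each $b>1$, where $a:=a_o^f(\Psi)$, and then letting $b\to 1^{+}$. After a standard reduction ensuring the finiteness hypothesis of Theorem~\ref{thm:main} (e.g.\ shrinking $D$ so that $\int_{\{\Psi<-t_0\}}|f|^{2}<+\infty$, or producing an $L^{2}$ competitor via the Ohsawa--Takegoshi theorem) I would first record that $a>0$: this is the boundary--point analogue of the positivity of jumping numbers, and follows from the hypothesis $f_o\in I_o$ by H\"older's inequality together with the local integrability near $o$ of $e^{-\varepsilon\Psi}$ on $\{\Psi<-t\}$ for small $\varepsilon>0$ (which in turn rests on the positivity of the complex singularity exponent of $\psi$).

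Fix $b>1$. The algebraic observation that makes Theorem~\ref{thm:main} applicable to $2ab\,\Psi$ is that, since $\lceil 2ab\rceil\ge 2ab$, the function $\psi_b:=2ab\,\psi+2(\lceil 2ab\rceil-2ab)\log|F|$ is plurisubharmonic on $D$ and $F_b:=F^{\lceil 2ab\rceil}$ is holomorphic on $D$, with $\min\{\psi_b-2\log|F_b|,0\}=2ab\,\Psi$; note also that $\{2ab\,\Psi<0\}=\{\Psi<0\}$ and that the sublevel sets of $2ab\,\Psi$ coincide as a family with those of $\Psi$, so $I_o$ and the module $I(2ab\,\Psi)_o$ are formed over the same domains. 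Theorem~\ref{thm:main} applied to the weight $2ab\,\Psi$ with $J:=I(2ab\,\Psi)_o$ (the hypotheses hold trivially: $J$ is an $\mathcal{O}_{\mathbb{C}^{n},o}$--submodule of $I_o$ containing $I(2ab\,\Psi)_o$, and $\tilde f=f$ shows $G(2abt_0;2ab\,\Psi,J,f)\le\int_{\{\Psi<-t_0\}}|f|^{2}<+\infty$) then tells us that $h_b(\rho):=G(-\log\rho;2ab\,\Psi,J,f)$ is concave on $(0,1]$ with $\lim_{\rho\to 0^{+}}h_b(\rho)=0$. Since a nonnegative concave function that vanishes at the origin has $\rho\mapsto h_b(\rho)/\rho$ nonincreasing, we get $h_b(\rho)\ge \rho\,h_b(1)$ for $\rho\in(0,1]$. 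For $r\in(0,e^{-at_0}]$ one has $\{a\Psi<\log r\}=\{2ab\,\Psi<2b\log r\}$, on which $f$ is holomorphic, so choosing $\tilde f=f$ and applying the last inequality with $\rho=r^{2b}$ yields
\[
\frac{1}{r^{2}}\int_{\{a\Psi<\log r\}}|f|^{2}\ \ge\ r^{2b-2}\,h_b(1)\ =\ r^{2b-2}\,G\big(0;\Psi,I(2ab\,\Psi)_o,f\big),
\]
using $h_b(1)=G(0;\Psi,I(2ab\,\Psi)_o,f)$, which holds because $\{2ab\,\Psi<0\}=\{\Psi<0\}$.

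Finally I would let $b\to 1^{+}$. Because $I_{+}(2a\Psi)_o=\bigcup_{b>1}I(2ab\,\Psi)_o$ is an increasing union, every admissible $\tilde f$ for $G(0;\Psi,I_{+}(2a\Psi)_o,f)$ is admissible for some $G(0;\Psi,I(2ab\,\Psi)_o,f)$, so $G(0;\Psi,I(2ab\,\Psi)_o,f)\downarrow G(0;\Psi,I_{+}(2a\Psi)_o,f)$ as $b\to 1^{+}$; together with $r^{2b-2}\to 1$ this gives $\frac{1}{r^{2}}\int_{\{a\Psi<\log r\}}|f|^{2}\ge G(0;\Psi,I_{+}(2a\Psi)_o,f)$ for all $r\in(0,e^{-at_0}]$. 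For the strict positivity, the definition $a=\sup\{a'\ge 0:f_o\in I(2a'\Psi)_o\}$ gives $f_o\notin I(2ab\,\Psi)_o$ for every $b>1$ (as $ab>a$), hence $f_o\notin I_{+}(2a\Psi)_o$; invoking a Noetherian--type stabilization of the chain $\{I(2ab\,\Psi)_o\}_{b>1}$ and the closedness of these modules under local $L^{2}$ convergence, one concludes $G(0;\Psi,I_{+}(2a\Psi)_o,f)>0$. \textbf{Main obstacle.} Granting Theorem~\ref{thm:main}, the substantive points are: reconciling the module $I_{+}(2a\Psi)_o$ with the hypothesis $I(\Psi)_o\subset J$ of Theorem~\ref{thm:main} --- which forces one to work with $I(2ab\,\Psi)_o$ for $b>1$ and pass to the limit rather than use $I_{+}(2a\Psi)_o$ directly --- and establishing the two positivity statements $a_o^f(\Psi)>0$ and $G(0;\Psi,I_{+}(2a\Psi)_o,f)>0$; the remaining steps are bookkeeping.
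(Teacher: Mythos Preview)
Your main argument---applying Theorem~\ref{thm:main} to the rescaled weight $2ab\,\Psi$ for $b>1$ with module $J=I(2ab\,\Psi)_o$, obtaining
\[
\frac{1}{r^{2}}\int_{\{a\Psi<\log r\}}|f|^{2}\ \ge\ r^{2b-2}\,G\big(0;\Psi,I(2ab\,\Psi)_o,f\big)\ \ge\ r^{2b-2}\,G\big(0;\Psi,I_{+}(2a\Psi)_o,f\big),
\]
and letting $b\to 1^{+}$---is essentially the paper's strategy (the paper writes $p\Psi$ in place of $2ab\,\Psi$). Your passage to the limit is in fact a bit cleaner than the paper's: the paper lets $p\to 2a^{+}$ on the left-hand side via dominated convergence, lands on the \emph{closed} set $\{2a\Psi\le 2\log r_{1}\}$, and then has to take a supremum over $r_{1}<r$ to recover the open set; you avoid this by using $\{a\Psi<\log r\}=\{2ab\Psi<2b\log r\}$ directly and absorbing the discrepancy into the harmless factor $r^{2b-2}$. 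Your invocation of Noetherian stabilization (Lemma~\ref{l:m5}) together with Lemma~\ref{l:2} for the strict positivity $G(0;\Psi,I_{+}(2a\Psi)_o,f)>0$ is exactly what the paper does.

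The one genuine gap is your proof that $a_o^{f}(\Psi)>0$. Your H\"older sketch would need $\int_{\{\Psi<-t\}\cap V}|f|^{2p}<+\infty$ for some $p>1$, but $f$ is only holomorphic on $\{\Psi<-t_0\}$ and $o$ may lie on its boundary (this is precisely the situation the paper is built to handle), so no higher integrability of $|f|$ is available; knowing $e^{-\varepsilon\Psi}\in L^{1}_{\mathrm{loc}}$ and $|f|^{2}\in L^{1}$ is not enough for H\"older. The paper instead deduces $a>0$ \emph{from the concavity inequality itself}: if $a=0$, apply the inequality for arbitrary $p>0$ and let $p\to 0^{+}$; by dominated convergence the left side tends to $\frac{1}{r_1^{2}}\int_{\{\Psi=-\infty\}}|f|^{2}=0$ (since $\{\Psi=-\infty\}=\{\psi=-\infty\}$ has Lebesgue measure zero), contradicting the already-established strict positivity of the right side. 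Note that your parametrization $p=2ab$ degenerates when $a=0$, so to run this contradiction you should set up the inequality for all small $p>0$ directly rather than through $b$.
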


Theorem \ref{thm:J-M2} implies the following strong openness property of $I(a\Psi)_o$.
\begin{Corollary}
	\label{c:3}
	$I(a\Psi)_o=I_+(a\Psi)_o$ holds for any $a\ge0$.
\end{Corollary}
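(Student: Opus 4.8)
The plan is to deduce $I(a\Psi)_o=I_+(a\Psi)_o$ from the effectiveness statement in Theorem \ref{thm:J-M2}, exactly in the spirit of how the strong openness property follows from its effectiveness version. Since $I(a\Psi)_o\subset I_+(a\Psi)_o$ is trivial, the content is the reverse inclusion: if $f_o\in I((a+\epsilon)\Psi)_o$ for some sequence $\epsilon\downarrow 0$, i.e. $f_o\in I_+(a\Psi)_o$, then $f_o\in I(a\Psi)_o$. First I would reduce to the case $a>0$ (the case $a=0$ says $I_o=I_+(0\cdot\Psi)_o$, which follows from the same argument, or is handled directly since $\Psi\le 0$ makes $I(a\Psi)_o$ increasing in $a$ near $0$). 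Next, I would reformulate membership: $f_o\in I_+(a\Psi)_o$ means there is $b>a$ with $\int_{\{\Psi<-t\}\cap V}|f|^2e^{-b\Psi}<+\infty$, which in particular gives $f_o\in I_o$ since $\Psi\le 0$; so Theorem \ref{thm:J-M2} applies to $f$ with its jumping-type number $a_o^f(\Psi):=\sup\{c\ge 0: f_o\in I(2c\Psi)_o\}$.

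The key step is to observe that $f_o\in I_+(a\Psi)_o\setminus I(a\Psi)_o$ would force $a_o^f(\Psi)=a/2$ \emph{and} $f_o\notin I(2a_o^f(\Psi)\Psi)_o$, i.e.\ the supremum defining $a_o^f(\Psi)$ is \emph{not} attained (it is attained from above, by the definition of $I_+$). Theorem \ref{thm:J-M2} then yields, with this finite $a_o^f(\Psi)>0$,
\[
\frac{1}{r^2}\int_{\{a_o^f(\Psi)\Psi<\log r\}}|f|^2\ \ge\ G\bigl(0;\Psi,I_+(2a_o^f(\Psi)\Psi)_o,f\bigr)\ >\ 0
\]
for all small $r$. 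The strict positivity of $G(0;\Psi,I_+(2a_o^f(\Psi)\Psi)_o,f)$ is the crucial output: it says $f_o\notin I_+(2a_o^f(\Psi)\Psi)_o$, because $G(0;\Psi,J,f)=0$ precisely when $f_o\in J$ (one can take $\tilde f\equiv 0$). But $I_+(2a_o^f(\Psi)\Psi)_o=\cup_{b>1}I(2b a_o^f(\Psi)\Psi)_o$, and if $f_o\in I((a+\epsilon)\Psi)_o$ with $a=2a_o^f(\Psi)$ and $\epsilon>0$, then choosing $b=1+\epsilon/(2a_o^f(\Psi))>1$ shows $f_o\in I(2b a_o^f(\Psi)\Psi)_o\subset I_+(2a_o^f(\Psi)\Psi)_o$, contradicting $G(0;\cdots)>0$. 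Hence no such $f_o$ exists, i.e.\ $I_+(a\Psi)_o\subset I(a\Psi)_o$.

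I expect the main obstacle to be the bookkeeping around the jumping number $a_o^f(\Psi)$ and the precise matching of the exponents: one must check that $f_o\in I_+(a\Psi)_o$ translates cleanly into $f_o\in I_+(2a_o^f(\Psi)\Psi)_o$ when $a=2a_o^f(\Psi)$, and that the relevant integrability on a small punctured neighborhood $\{\Psi<-t\}\cap V$ (rather than on all of $\{\Psi<0\}$) is the right notion throughout — in particular that $a_o^f(\Psi)$ is finite under the hypothesis $f_o\in I_+(a\Psi)_o$ (clear, since $a_o^f(\Psi)\le a/2<+\infty$ once $f_o\notin I(a\Psi)_o$, as $I(c\Psi)_o$ is non-decreasing in $c$ because $\Psi\le 0$). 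A secondary point is to make sure the equivalence "$G(0;\Psi,J,f)=0\iff f_o\in J$" is legitimate here: $f_o\in J$ gives the competitor $\tilde f=0$ directly (using that $J$ is a submodule containing the origin), and conversely a vanishing infimum together with the closedness of $J$ under the relevant $L^2$-type limits — which holds since $J$ ranges over the modules $I_+(2a_o^f(\Psi)\Psi)_o$ appearing in Theorem \ref{thm:J-M2} — yields $f_o\in J$; alternatively, one invokes directly the strict inequality $G(0;\cdots)>0$ furnished by Theorem \ref{thm:J-M2} and never needs the converse.
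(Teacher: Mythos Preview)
Your proof has the trivial and non-trivial inclusions swapped. Since $\Psi\le 0$, the weight $e^{-p\Psi}$ is \emph{increasing} in $p$, so the integrability condition becomes harder as $p$ grows: $I(p\Psi)_o$ is \emph{decreasing} in $p$, not increasing as you assert. Hence $I_+(a\Psi)_o=\bigcup_{p>a}I(p\Psi)_o\subset I(a\Psi)_o$ is the easy inclusion, and the content of the corollary is $I(a\Psi)_o\subset I_+(a\Psi)_o$. Your contradiction hypothesis ``$f_o\in I_+(a\Psi)_o\setminus I(a\Psi)_o$'' is therefore vacuous, and the argument proves nothing.

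More seriously, even after you flip the directions the strategy of extracting a contradiction solely from the strict positivity $G(0;\Psi,I_+(2a_o^f(\Psi)\Psi)_o,f)>0$ is circular. Under the corrected hypothesis $f_o\in I(a\Psi)_o\setminus I_+(a\Psi)_o$ one indeed gets $a_o^f(\Psi)=a/2$, and Theorem~\ref{thm:J-M2} yields $G(0;\Psi,I_+(a\Psi)_o,f)>0$; but this only says $f_o\notin I_+(a\Psi)_o$, which is exactly what you assumed. The paper obtains the contradiction differently: it uses the \emph{volume} part of Theorem~\ref{thm:J-M2}, namely $\frac{1}{r^2}\int_{\{a_o^f(\Psi)\Psi<\log r\}\cap U}|f|^2\ge C_U>0$, and then integrates in $r$ (Fubini) to deduce
\[
\int_{\{a\Psi<-t\}\cap U}|f|^2e^{-a\Psi}\;\ge\;C_U\int_{e^t}^{+\infty}\frac{dl}{l}\;=\;+\infty,
\]
which contradicts $f_o\in I(a\Psi)_o$. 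That Fubini/layer-cake step is the missing idea in your proposal.
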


When $F\equiv1$ and $\psi(o)=-\infty$, Corollary \ref{c:3} is the strong openness property of multiplier ideal sheaves (\cite{GZSOC}).

\subsubsection{A lower semicontinuity property}
\

Let $\{\psi_m\}_{m\in\mathbb{Z}_{>0}}$ be a sequence of  plurisubharmonic functions on a pseudoconvex domain $\Delta^n$, 
and let $\{F_m\}_{m\in\mathbb{Z}_{>0}}$ be a sequence of holomorphic functions on $\Delta^n$. 
Denote $\Psi_m:=\min\{\psi_m-2\log|F_m|,0\}$. Assume that $\{\Psi_m\}_{m\in\mathbb{Z}_{>0}}$ converges to a Lebesgue measurable function $\Psi$ on $\Delta^n$ in Lebesgue measure.
Let $\{f_m\}_{m\in\mathbb{Z}_{>0}}$ be a sequence of holomorphic functions on $\{\Psi_m<-t_m\}$, where $t_m>0$. 

In this section, we present a lower semicontinuity property of $\Psi_m$ with respect to $I(\Psi_m)_o$.
\begin{Proposition}
	\label{p:semi}
 Assume that there exist $t_0\ge t_m$ for any $m\in\mathbb{Z}_{>0}$ and a sequence of Lebesgue functions $\{\tilde f_m\}_{\mathbb{Z}_{>0}}$ on $\Delta^n$ with uniform bound satisfying that $\tilde f_m=f_m$ on $\{\Psi_m<-t_0\}$ for any $m\in\mathbb{Z}_{>0}$ and $\{\tilde f_m\}_{\mathbb{Z}_{>0}}$ converges to a Lebesgue measurable function $\tilde f$ on $\Delta^n$ in Lebesgue measure. Assume that for any pseudoconvex domain $D\subset\Delta^n$ containing the origin $o$,
	$$\inf_mG(0;\Psi_m,I(\Psi_m)_o,f_m)>0.$$
Then we have
	$$|\tilde f|^2e^{-\Psi}\not\in L^1(U),$$
	where $U$ is any neighborhood  of  $o$.
	
If $\{\psi_m\}_{m\in\mathbb{Z}_{>0}}$ and $\{\log|F_m|\}_{m\in\mathbb{Z}_{>0}}$ converge to a plurisubharmonic function $\psi$ and $\log|F|$ in Lebesgue measure respectively, where $F$ is a holomorphic function, and there exists a holomorphic function $f$ on $\{\psi-2\log|F|<-t\}\cap V$ such that $f=\tilde f$ on $\{\psi-2\log|F|<-t\}\cap V$, where $t>0$ and $V$ is a neighborhood of $o$, we have
	$$f_o\not\in{I}(\Psi)_o.$$	
\end{Proposition}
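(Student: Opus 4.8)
The plan is to argue by contradiction and to reduce both conclusions to the first one. Suppose $|\tilde f|^{2}e^{-\Psi}\in L^{1}(U_{0})$ for some neighborhood $U_{0}$ of $o$, and fix a small pseudoconvex domain (e.g.\ a ball) $D$ with $o\in D\subset U_{0}\cap\Delta^{n}$, so that also $|\tilde f|^{2}e^{-\Psi}\in L^{1}(D)$. I claim $\inf_{m}G(0;\Psi_{m},I(\Psi_{m})_{o},f_{m})=0$, the minimal $L^{2}$ integrals being formed on the sublevel sets inside $D$; this contradicts the hypothesis for this $D$, giving the first conclusion. The second conclusion then follows: there $\Psi=\min\{\psi-2\log|F|,0\}$ almost everywhere, and if $f_{o}\in I(\Psi)_{o}$ then $\int_{\{\Psi<-s\}\cap W}|f|^{2}e^{-\Psi}<+\infty$ for some $s\gg1$ and some bounded neighborhood $W$ of $o$; taking $s\ge t$, using $f=\tilde f$ on $\{\psi-2\log|F|<-t\}\cap V=\{\Psi<-t\}\cap V$, and using $|\tilde f|\le C$ a.e.\ (a uniformly bounded sequence converging in measure has a bounded limit) to bound $\int_{\{\Psi\ge-s\}\cap W\cap V}|\tilde f|^{2}e^{-\Psi}\le e^{s}C^{2}\mu(W\cap V)<+\infty$, one gets $|\tilde f|^{2}e^{-\Psi}\in L^{1}(W\cap V)$, contradicting the first conclusion; hence $f_{o}\notin I(\Psi)_{o}$.

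The core of the argument is a single concavity estimate. Fix $m$. Since $t_{0}\ge t_{m}$, for every $t\ge t_{0}$ the function $f_{m}$ is holomorphic on $\{\Psi_{m}<-t\}$ and $(f_{m}-f_{m})_{o}=0\in I(\Psi_{m})_{o}$, so $f_{m}$ is an admissible competitor for $G(t;\Psi_{m},I(\Psi_{m})_{o},f_{m})$; hence
$$G(t;\Psi_{m},I(\Psi_{m})_{o},f_{m})\le\int_{\{\Psi_{m}<-t\}\cap D}|f_{m}|^{2}=\int_{\{\Psi_{m}<-t\}\cap D}|\tilde f_{m}|^{2}\le C^{2}\mu(D)\qquad(t\ge t_{0}).$$
In particular $G(t_{0};\Psi_{m},I(\Psi_{m})_{o},f_{m})<+\infty$, so Theorem \ref{thm:main} applies (with $D,\psi_{m},F_{m}$ in place of the ambient data, $J=I(\Psi_{m})_{o}$, $f=f_{m}$): $r\mapsto G(-\log r;\Psi_{m},I(\Psi_{m})_{o},f_{m})$ is concave on $(0,1]$ and tends to $0$ as $r\to0^{+}$. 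Writing any $r\in(0,1)$ as a convex combination of a small $\epsilon>0$ and $1$ and letting $\epsilon\to0^{+}$ yields $G(-\log r;\ldots)\ge r\,G(0;\ldots)$ on $(0,1]$; with $r=e^{-t}$ this reads $G(0;\Psi_{m},I(\Psi_{m})_{o},f_{m})\le e^{t}G(t;\Psi_{m},I(\Psi_{m})_{o},f_{m})$ for all $t\ge0$. Combining, for all $t\ge t_{0}$ and all $m$,
$$G(0;\Psi_{m},I(\Psi_{m})_{o},f_{m})\le e^{t}\int_{\{\Psi_{m}<-t\}\cap D}|\tilde f_{m}|^{2}.$$

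Finally I let $m\to+\infty$ and then $t\to+\infty$. Fix $t\ge t_{0}$ outside the at-most-countable set $\{t:\mu(\{\Psi=-t\}\cap D)>0\}$. Given any subsequence, convergence in measure lets one pass to a further subsequence along which $\Psi_{m}\to\Psi$ and $\tilde f_{m}\to\tilde f$ a.e.\ on $D$; then $\mathbf{1}_{\{\Psi_{m}<-t\}}|\tilde f_{m}|^{2}\to\mathbf{1}_{\{\Psi<-t\}}|\tilde f|^{2}$ a.e.\ on $D$, and since $|\tilde f_{m}|\le C$, the bounded convergence theorem gives $\int_{\{\Psi_{m}<-t\}\cap D}|\tilde f_{m}|^{2}\to\int_{\{\Psi<-t\}\cap D}|\tilde f|^{2}$; by the subsequence criterion the whole sequence converges. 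Hence $\inf_{m}G(0;\Psi_{m},I(\Psi_{m})_{o},f_{m})\le e^{t}\int_{\{\Psi<-t\}\cap D}|\tilde f|^{2}$. Now $|\tilde f|^{2}e^{-\Psi}\in L^{1}(D)$ forces $\tilde f=0$ a.e.\ on $\{\Psi=-\infty\}$, and $e^{-\Psi}\ge e^{t}$ on $\{\Psi<-t\}$, so $e^{t}\int_{\{\Psi<-t\}\cap D}|\tilde f|^{2}\le\int_{\{\Psi<-t\}\cap D}|\tilde f|^{2}e^{-\Psi}\to0$ as $t\to+\infty$ by dominated convergence (dominating function $|\tilde f|^{2}e^{-\Psi}$). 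Therefore $\inf_{m}G(0;\Psi_{m},I(\Psi_{m})_{o},f_{m})=0$, the desired contradiction.

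I do not anticipate a serious obstacle; the proof is self-contained modulo Theorem \ref{thm:main}. The two points that require care are: (i) extracting the inequality $G(0)\le e^{t}G(t)$ in the correct direction from the concavity of $G(-\log r)$ and the vanishing $\lim_{t\to\infty}G(t)=0$; and (ii) the limit in $m$, where convergence in measure provides only a.e.-convergent subsequences and the cut-off level $t$ must avoid the countably many values with $\mu(\{\Psi=-t\}\cap D)>0$.
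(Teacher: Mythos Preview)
Your proof is correct and follows essentially the same route as the paper: both derive from Theorem~\ref{thm:main} the key estimate $G(0;\Psi_m,I(\Psi_m)_o,f_m)\le e^{t}\int_{\{\Psi_m<-t\}}|\tilde f_m|^{2}$ for $t\ge t_0$, then pass to the limit in $m$ using the uniform bound on $\tilde f_m$ and convergence in measure. The only cosmetic differences are that the paper argues directly and finishes with Fubini to obtain $\int_{\{\Psi<-t_0\}}|\tilde f|^{2}e^{-\Psi}=+\infty$, whereas you argue by contradiction and finish with dominated convergence, and that the paper handles the moving sublevel sets via the set inclusion $\{\Psi_m<-t\}\subset\{\Psi<-t+1\}\cup\{|\Psi_m-\Psi|\ge1\}$ rather than by extracting a.e.-convergent subsequences and avoiding countably many levels $t$.
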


When $F_m\equiv1$ and $\psi_m(o)=-\infty$ for any $m\in\mathbb{Z}_{>0}$, Proposition \ref{p:semi} is Proposition 1.8 in \cite{GZeff}.

\vspace{.1in} {\em Acknowledgements}. The second named author was supported by National Key R\&D Program of China 2021YFA1003100, NSFC-11825101, NSFC-11522101 and NSFC-11431013. The third named author was supported by China Postdoctoral Science Foundation BX20230402 and 2023M743719.

\section{Preparations}

\subsection{$L^2$ methods}\label{sec:L2.methods}
\

Let $F$ be a holomorphic function on a pseudoconvex domain $D\subset\mathbb{C}^n$, and let $\psi$ be a plurisubharmonic function on $D$.  Let $\delta$ be a positive integer. Denote
$$\varphi:=(1+\delta)\max\{\psi,2\log|F|\},$$
 and
 $$\Psi:=\min\{\psi-2\log|F|,0\}.$$
 If $F(z)=0$ for $z\in D$, we set $\Psi(z)=0$. Then $\varphi+\Psi$ and $\varphi+(1+\delta)\Psi$ are both plurisubharmonic functions on $D$.

\begin{Lemma}
	\label{l:L2}
	Let $B\in(0,+\infty)$ and $t_0\in(0,+\infty)$ be arbitrarily given. Let $f$ be a holomorphic function on $\{\Psi<-t_0\}$ such that
	\begin{displaymath}
		\int_{\{\Psi<-t_0\}}|f|^2<+\infty.
	\end{displaymath}
	Then there exists a holomorphic function $\tilde F$ on $D$ such that
	\begin{displaymath}\begin{split}
		&\int_{D}|\tilde F-(1-b_{t_0,B}(\Psi))fF^{1+\delta}|^2e^{-\varphi+v_{t_0,B}(\Psi)-\Psi}\\
		\le&\left(\frac{1}{\delta}+1-e^{-t_0-B} \right)\int_D\frac{1}{B}\mathbb{I}_{\{-t_0-B<\Psi<-t_0\}}|f|^2e^{-\Psi},		
	\end{split}	\end{displaymath}
	where $b_{t_0,B}(t)=\int_{-\infty}^t\frac{1}{B}\mathbb{I}_{\{-t_0-B<s<-t_0\}}ds$ and $v_{t_0,B}(t)=\int_0^tb_{t_0,B}(s)ds$.
\end{Lemma}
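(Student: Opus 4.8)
The plan is to produce $\tilde F$ as a holomorphic correction of the (non-holomorphic) extension-by-zero of $(1-b_{t_0,B}(\Psi))fF^{1+\delta}$, the correction being an $L^{2}$ solution of a $\bar\partial$-equation estimated by a twisted Hörmander inequality of Donnelly--Fefferman--Berndtsson type in the optimal form of Guan--Zhou. By a standard approximation step one first reduces to the case where $D$ is bounded and strictly pseudoconvex with $\psi$ and $\log|F|$ smooth up to $\overline D$ (smooth the weight, regularize $\log|F|$ as $\tfrac12\log(|F|^{2}+\varepsilon^{2})$, exhaust $D$ by smooth strictly pseudoconvex subdomains), and where $b_{t_0,B},v_{t_0,B}$ are replaced by mollified functions with $v'=b$ and $b'=\tfrac1B\mathbb I_{\{-t_0-B<t<-t_0\}}$ smoothed; all of this is removed in a final limit, using Montel's theorem and the weak lower semicontinuity of weighted $L^{2}$-norms, since the right-hand side of the asserted inequality is independent of the regularizing parameters and of the exhausting subdomain.

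\emph{Setting up the $\bar\partial$-problem.} Put $F_1:=(1-b_{t_0,B}(\Psi))fF^{1+\delta}$. Since $1-b_{t_0,B}(\Psi)$ vanishes on $\{\Psi\ge -t_0\}$ while $fF^{1+\delta}$ is holomorphic on $\{\Psi<-t_0\}$, $F_1$ extends smoothly by $0$ to $D$ and
\[
\alpha:=\bar\partial F_1=-b_{t_0,B}'(\Psi)\,fF^{1+\delta}\,\bar\partial\Psi
\]
is a smooth $\bar\partial$-closed $(0,1)$-form supported in $S:=\{-t_0-B<\Psi<-t_0\}\subset\{\Psi<0\}$. Any solution $u$ of $\bar\partial u=\alpha$ gives a holomorphic $\tilde F:=F_1-u$ with $\tilde F-(1-b_{t_0,B}(\Psi))fF^{1+\delta}=-u$, so it suffices to solve $\bar\partial u=\alpha$ with
\[
\int_D|u|^{2}e^{-\varphi+v_{t_0,B}(\Psi)-\Psi}\le\Big(\tfrac1\delta+1-e^{-t_0-B}\Big)\int_D\tfrac1B\mathbb I_S\,|f|^{2}e^{-\Psi}.
\]

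\emph{The twisted estimate.} I would solve this by Hörmander's $L^{2}$-estimate twisted by functions of $\Psi$, with plurisubharmonic background weight $\varphi+(1+\delta)\Psi$ (which is psh, as recorded in the preparation, and equals $(1+\delta)\psi$ on $\{\Psi<0\}$). Writing
\[
e^{-\varphi+v_{t_0,B}(\Psi)-\Psi}=e^{-(\varphi+(1+\delta)\Psi)}\cdot e^{\,v_{t_0,B}(\Psi)+\delta\Psi},
\]
one uses $w(t):=v_{t_0,B}(t)+\delta t$, with $w'=b_{t_0,B}+\delta>0$ and $w''=b_{t_0,B}'\ge0$, as the twisting function, together with a one-variable auxiliary function chosen to optimize the constant, exactly as in Guan--Zhou. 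Two structural facts make the estimate close: (i) on $S\subset\{\Psi<0\}$ one has $\varphi=(1+\delta)\log|F|^{2}$, hence $|F^{1+\delta}|^{2}e^{-\varphi}\equiv1$ and the factor $F^{1+\delta}$ in $\alpha$ disappears from the integrand, leaving $\tfrac1{B^{2}}\mathbb I_S|f|^{2}|\bar\partial\Psi|^{2}$; (ii) on $\{\Psi<0\}$ the Levi form of $\varphi+(1+\delta)\Psi$ dominates $(1+\delta)\,i\partial\bar\partial\Psi$, so the $|\bar\partial\Psi|^{2}$ produced by $\alpha$ is absorbed with a gain $(1+\delta)^{-1}$ which, together with the contribution of the auxiliary ODE over the range of $\Psi$, assembles into the constant $\tfrac1\delta+1-e^{-t_0-B}$. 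Then $\tilde F=F_1-u$ is the desired function, and undoing the reductions finishes the proof.

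\emph{Main obstacle.} The delicate point is obtaining the \emph{sharp} constant $\tfrac1\delta+1-e^{-t_0-B}$: a plain Hörmander or Donnelly--Fefferman application would lose a factor, so one must run the Guan--Zhou argument with the precise pair of auxiliary functions of $\Psi$ so that the cut-off $b_{t_0,B}$, its primitive $v_{t_0,B}$, the curvature of $\varphi+(1+\delta)\Psi$, and the extra weight $e^{-\Psi}$ balance exactly. A secondary technicality is the justification of the limiting procedure in the presence of the zero set of $F$ (where $e^{-\varphi}$ forces $\tilde F$ to be divisible by $F^{1+\delta}$ in an $L^{2}$ sense) and of the non-smoothness of $b_{t_0,B},v_{t_0,B}$.
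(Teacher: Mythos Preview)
Your overall strategy coincides with the paper's: solve a $\bar\partial$-equation for the cutoff $(1-b_{t_0,B}(\Psi))fF^{1+\delta}$ via a Guan--Zhou twisted $L^2$ estimate, with a pair of one-variable auxiliary functions $s,u$ of $-v_\epsilon(\Psi)$ determined by the ODE system $(s+\frac{s'^2}{u''s-s''})e^{u-t}=1$, $s'-su'=1$, and then pass through smoothings and an exhaustion by strictly pseudoconvex domains. The sharp constant $\tfrac1\delta+1-e^{-t_0-B}$ is exactly $\sup e^{-u}$ over $[0,t_0+B]$ for the solution $u(t)=-\log(\tfrac1\delta+1-e^{-t})$.

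Where your sketch diverges from the paper and becomes imprecise is the curvature step. Your facts (i)--(ii) are stated only on $\{\Psi<0\}$, but the twisted Bochner inequality must hold on the whole domain, including the region where $\Psi$ (or its smoothing) is near $0$ and $i\partial\bar\partial\Psi$ is not semipositive. The paper does \emph{not} take $\varphi+(1+\delta)\Psi$ as the sole plurisubharmonic background with an additional twist $e^{w(\Psi)}$, $w=v+\delta t$; with your parametrization $w'\in[\delta,1+\delta]$ is not $\le 1$, and the usual Donnelly--Fefferman mechanism does not directly apply across $\{\Psi=0\}$. Instead the paper sets $\Phi=\phi+\varphi_m+\Psi_m$ (so the background is $\varphi_m+\Psi_m$, also psh) and $\eta=s(-v_\epsilon(\Psi_m))$, and closes the curvature \emph{globally} via
\[
\eta\,i\partial\bar\partial(\varphi_m+\Psi_m)+v_\epsilon'(\Psi_m)\,i\partial\bar\partial\Psi_m\ \ge\ \tfrac1\delta\,i\partial\bar\partial(\varphi_m+\Psi_m)+v_\epsilon'\,i\partial\bar\partial\Psi_m\ \ge\ 0,
\]
the last inequality being the convex combination $(1-v')\,i\partial\bar\partial(\varphi_m+\Psi_m)+v'\,i\partial\bar\partial(\varphi_m+(1+\delta)\Psi_m)\ge 0$ divided by $\delta$. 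Thus \emph{both} plurisubharmonic structures $\varphi+\Psi$ and $\varphi+(1+\delta)\Psi$ are used simultaneously, and the lower bound $s\ge\tfrac1\delta$ (built into the ODE solution) is precisely what feeds $\delta$ into the final constant. Correspondingly, the smoothing is more delicate than ``regularize $\log|F|$ and mollify $\psi$'': the paper first restricts to the pseudoconvex set $D\setminus\{F=0\}$ and then uses Demailly's regularized maximum $M_\eta$ to produce smooth $\varphi_m,\Psi_m$ with $\varphi_m+\Psi_m$ and $\varphi_m+(1+\delta)\Psi_m$ both psh and $\{\Psi_m<-t_0\}\subset\{\Psi<-t_0\}$, exactly the properties the curvature step requires.
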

We give the proof of Lemma \ref{l:L2} in Appendix. Lemma \ref{l:L2} implies the following lemma.

\begin{Lemma}[see \cite{G16}, see also \cite{GY-support,GY-concavity}]
	\label{l:L2'}Let $B\in(0,+\infty)$ and $t_0>t_1\ge0$ be arbitrarily given. Let $f$ be a holomorphic function on $\{\Psi<-t_0\}$ such that
	\begin{displaymath}
		\int_{\{\Psi<-t_0\}}|f|^2<+\infty.
	\end{displaymath}
	Then there exists a holomorphic function $\tilde F$ on $
	\{\Psi<-t_1\}$ such that
	\begin{displaymath}\begin{split}
		&\int_{\{\Psi<-t_1\}}|\tilde F-(1-b_{t_0,B}(\Psi))f|^2e^{v_{t_0,B}(\Psi)-\Psi}\\
		\le&\left(e^{-t_1}-e^{-t_0-B} \right)\int_D\frac{1}{B}\mathbb{I}_{\{-t_0-B<\Psi<-t_0\}}|f|^2e^{-\Psi},		
	\end{split}	\end{displaymath}
	where $b_{t_0,B}(t)=\int_{-\infty}^t\frac{1}{B}\mathbb{I}_{\{-t_0-B<s<-t_0\}}ds$ and $v_{t_0,B}(t)=\int_0^tb_{t_0,B}(s)ds$.
\end{Lemma}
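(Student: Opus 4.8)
The plan is to realize $\tilde F$ as the holomorphic correction, by a $\bar\partial$-solution carrying a sharp weighted estimate, of an explicit non-holomorphic extension; this is the $L^2$-method of Guan--Zhou (cf. \cite{GZSOC,GZeff,G16}). First I would let $\tilde f_0$ be the function on $D$ equal to $(1-b_{t_0,B}(\Psi))\,fF^{1+\delta}$ on $\{\Psi<-t_0\}$ and equal to $0$ on $D\setminus\{\Psi<-t_0\}$. Since $1-b_{t_0,B}(t)$ equals $1$ for $t\leq-t_0-B$, is linear on $(-t_0-B,-t_0)$, and vanishes for $t\geq-t_0$, the function $\tilde f_0$ is well defined, lies in $W^{1,2}_{\mathrm{loc}}(D)$ (a holomorphic function times a Lipschitz function, extended by $0$ across the set where the Lipschitz factor already vanishes), and
\[
\bar\partial\tilde f_0=\lambda:=-\tfrac1B\,\mathbb{I}_{\{-t_0-B<\Psi<-t_0\}}\,fF^{1+\delta}\,\bar\partial\Psi ,
\]
a $\bar\partial$-closed $(0,1)$-form supported in the ``collar'' $C:=\{-t_0-B<\Psi<-t_0\}$, on which $\Psi=\psi-2\log|F|$, so $F\neq0$, $\Psi$ is locally plurisubharmonic, and $\lambda$ has $L^2_{\mathrm{loc}}$ coefficients. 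It suffices to produce $u$ on $D$ with $\bar\partial u=\lambda$ and
\[
\int_D|u|^2e^{-\varphi+v_{t_0,B}(\Psi)-\Psi}\leq\Bigl(\tfrac1\delta+1-e^{-t_0-B}\Bigr)\int_D\tfrac1B\mathbb{I}_{\{-t_0-B<\Psi<-t_0\}}|f|^2e^{-\Psi};
\]
then $\tilde F:=\tilde f_0-u$ is holomorphic on $D$ (since $\bar\partial\tilde F=\lambda-\lambda=0$) and, because $\tilde F-(1-b_{t_0,B}(\Psi))fF^{1+\delta}=-u$ on all of $D$ (with the product understood to be $0$ on $\{\Psi\geq-t_0\}$), it satisfies the asserted inequality. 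The right-hand side above is finite, being bounded by a constant multiple of $\int_{\{\Psi<-t_0\}}|f|^2$.

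The identity that makes the estimate close is that on $\{\Psi<0\}$, hence on $C$, one has $\psi<2\log|F|$, so that $\varphi=(1+\delta)\cdot2\log|F|=\log|F^{1+\delta}|^2$ and therefore $|fF^{1+\delta}|^2e^{-\varphi}=|f|^2$ there; this turns the curvature-weighted norm of $\lambda$ into $\tfrac1B\mathbb{I}_C|f|^2e^{-\Psi}$ up to a constant. To solve $\bar\partial u=\lambda$ with the sharp constant I would apply the twisted $L^2$-existence theorem of Guan--Zhou type -- the refinement of the Ohsawa--Takegoshi/Donnelly--Fefferman/B{\l}ocki technique used in \cite{GZSOC,GZeff,G16} -- with the plurisubharmonic weight $\Phi:=\varphi+\Psi$ (plurisubharmonic on $D$ by the hypothesis recalled just before the lemma), the Donnelly--Fefferman gain factor $e^{v_{t_0,B}(\Psi)}$ (so that $e^{-\Phi}e^{v_{t_0,B}(\Psi)}$ is exactly the target weight), and a smooth auxiliary function $s(-\Psi)$ of $\Psi$ (together with any paired function entering the twist). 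The role of $s$ is to absorb the only negative contribution to the twisted curvature form, namely the term $-b_{t_0,B}'(\Psi)\,i\partial\Psi\wedge\bar\partial\Psi$ supported on $C$, the remaining terms $i\partial\bar\partial\varphi+(1-b_{t_0,B}(\Psi))\,i\partial\bar\partial\Psi$ being already nonnegative. With $s$ chosen as the optimal solution of the associated one-variable ODE, the resulting $L^2$-estimate collapses -- after substituting $e^{-\Phi}=e^{-\varphi}e^{-\Psi}$, $|fF^{1+\delta}|^2e^{-\varphi}=|f|^2$, $b_{t_0,B}'(\Psi)=\tfrac1B\mathbb{I}_C$, and the boundedness of $v_{t_0,B}(\Psi)$ and $\Psi$ on $C$ -- to the displayed inequality, the explicit constant $\tfrac1\delta+1-e^{-t_0-B}$ being the output of that ODE/integration exactly as in \cite{G16}.

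To justify this in the stated generality ($D$ an arbitrary pseudoconvex domain, $\psi$ merely plurisubharmonic and possibly taking the value $-\infty$, $F$ possibly vanishing), I would run the standard approximation: exhaust $D$ by relatively compact smooth strongly pseudoconvex subdomains $D_j\uparrow D$, replace $\psi$ by a decreasing sequence of smooth plurisubharmonic functions (or by $\max\{\psi,-j\}$) and, where necessary, regularize so all weights are smooth and bounded on each $D_j$, solve the problem on $D_j$ with the uniform bound above, and extract a subsequence of the resulting $\tilde F_j$ converging weakly in $L^2_{\mathrm{loc}}(D)$. A weak limit of holomorphic functions with locally uniformly bounded $L^2$ norms is holomorphic, Fatou's lemma preserves the inequality in the limit, and monotone/dominated convergence recovers $\Psi$, $\varphi$ and the original data; this yields $\tilde F$ on $D$ as required.

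The step I expect to be the main obstacle is pinning down the sharp constant: one must make exactly the right choice of the auxiliary function $s$ (and of any paired function governing the B{\l}ocki twist) and carry out the accompanying ODE/convexity computation so that the twisted curvature form neither loses semipositivity nor produces a constant worse than $\tfrac1\delta+1-e^{-t_0-B}$. A secondary technical nuisance is that $\Psi$ is only plurisubharmonic and the weights are singular along $\{F=0\}$, so one must verify that $b_{t_0,B}'(\Psi)\,\bar\partial\Psi$ is supported away from $\{F=0\}$ and that all integrations by parts needed for the $\bar\partial$-estimate are legitimate in the smooth approximating setting before passing to the limit.
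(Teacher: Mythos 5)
Your proposal does not prove the stated lemma; it proves (in outline) the auxiliary Lemma 2.1 of the paper, and the conclusion you reach is genuinely different from the one asserted. The estimate you aim for,
\begin{displaymath}
\int_D|u|^2e^{-\varphi+v_{t_0,B}(\Psi)-\Psi}\le\Bigl(\tfrac1\delta+1-e^{-t_0-B}\Bigr)\int_D\tfrac1B\mathbb{I}_{\{-t_0-B<\Psi<-t_0\}}|f|^2e^{-\Psi},
\end{displaymath}
differs from the target in three essential respects: the constant is $\frac1\delta+1-e^{-t_0-B}$ rather than $e^{-t_1}-e^{-t_0-B}$; the weight carries $e^{-\varphi}$ and the function being corrected is $(1-b_{t_0,B}(\Psi))fF^{1+\delta}$ rather than $(1-b_{t_0,B}(\Psi))f$; and the integration is over $D$ rather than $\{\Psi<-t_1\}$. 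Most tellingly, the parameter $t_1$ never enters your argument at all, whereas the factor $e^{-t_1}$ in the asserted constant is a nontrivial gain that must be produced somewhere. Setting $\tilde F:=\tilde f_0-u$ therefore does \emph{not} ``satisfy the asserted inequality'': for a fixed $\delta$ the constant $\frac1\delta+1-e^{-t_0-B}$ is strictly worse than $e^{-t_1}-e^{-t_0-B}$ (even when $t_1=0$), and the weights do not match.

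The missing content is exactly the paper's proof of this lemma, which is a reduction to Lemma 2.1 in three steps. First, one applies Lemma 2.1 not to $\psi$ but to the shifted function $\psi+t_1$, i.e.\ to $\tilde\varphi:=(1+\delta)\max\{\psi+t_1,2\log|F|\}$ and $\tilde\Psi:=\min\{\psi+t_1-2\log|F|,0\}$, using that $\{\Psi<-t_0\}=\{\tilde\Psi<-(t_0-t_1)\}$ and $\tilde\Psi=\Psi+t_1$ on $\{\Psi<-t_1\}$; unwinding the substitution converts the constant into $\frac1\delta e^{-t_1}+e^{-t_1}-e^{-t_0-B}$, which is where $e^{-t_1}$ comes from. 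Second, one restricts to $\{\Psi<-t_1\}$ and divides by $F^{1+\delta}$, using $|F|^{2(1+\delta)}e^{-\varphi}=1$ there, to strip the weight $e^{-\varphi}$ and replace $fF^{1+\delta}$ by $f$; this is also why the resulting $\tilde F$ is only claimed to be holomorphic on $\{\Psi<-t_1\}$, not on $D$ as in your construction. Third, one lets $\delta\to+\infty$: the functions $F_\delta=\tilde F_\delta/F^{1+\delta}$ have uniformly bounded $L^2$ norms on $\{\Psi<-t_1\}$, a normal-families argument extracts a compactly convergent subsequence, and Fatou's lemma passes the estimate to the limit, eliminating the $\frac1\delta e^{-t_1}$ term and yielding the clean constant $e^{-t_1}-e^{-t_0-B}$. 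Without these steps your argument establishes only the weaker, $\delta$-dependent statement.
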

\begin{proof}
	Let $\delta$ be any positive integer. Set $\tilde\varphi:=(1+\delta)\max\{\psi+t_1,2\log|F|\}$ and $\tilde\Psi:=\{\psi+t_1-2\log|F|,0\}$. Note that $\{\Psi<-t_0\}=\{\tilde\Psi<-(t_0-t_1)\}$ and $\tilde\Psi=\Psi+t_1$ on $\{\Psi<-t_1\}$. Lemma \ref{l:L2} shows that there exists a holomorphic function $\tilde F_{\delta}$ on $D$ such that
		\begin{displaymath}\begin{split}
		&\int_{D}|\tilde F_{\delta}-(1-b_{t_0-t_1,B}(\tilde\Psi))fF^{1+\delta}|^2e^{-\tilde\varphi+v_{t_0-t_1,B}(\tilde\Psi)-\tilde\Psi}\\
		\le&\left(\frac{1}{\delta}+1-e^{-t_0-B+t_1} \right)\int_D\frac{1}{B}\mathbb{I}_{\{-t_0-B+t_1<\tilde\Psi<-t_0+t_1\}}|f|^2e^{-\tilde\Psi},		
	\end{split}	\end{displaymath}
	which implies that
	\begin{equation}
		\label{eq:0211c}\begin{split}
		&\int_{\{\Psi<-t_1\}}|\tilde F_{\delta}-(1-b_{t_0,B}(\Psi))fF^{1+\delta}|^2e^{-\varphi+v_{t_0,B}(\Psi)-\Psi}\\
		=&\int_{\{\Psi<-t_1\}}|\tilde F_{\delta}-(1-b_{t_0-t_1,B}(\tilde\Psi))fF^{1+\delta}|^2e^{-\tilde\varphi+v_{t_0-t_1,B}(\tilde\Psi)-\tilde\Psi}\\
		\le&\int_{D}|\tilde F_{\delta}-(1-b_{t_0-t_1,B}(\tilde\Psi))fF^{1+\delta}|^2e^{-\tilde\varphi+v_{t_0-t_1,B}(\tilde\Psi)-\tilde\Psi}\\
		\le&\left(\frac{1}{\delta}e^{-t_1}+e^{-t_1}-e^{-t_0-B} \right)\int_D\frac{1}{B}\mathbb{I}_{\{-t_0-B<\Psi<-t_0\}}|f|^2e^{-\Psi}.
		\end{split}
	\end{equation}
	Let $F_{\delta}=\frac{\tilde F_{\delta}}{F^{1+\delta}}$ be a holomorphic function on $\{\Psi<-t_1\}$. Since $|F|^{2(1+\delta)}e^{-\varphi}=1$ on $\{\Psi<-t_1\}$, inequality \eqref{eq:0211c} becomes
	\begin{equation}
		\label{eq:0211d}\begin{split}
				&\int_{\{\Psi<-t_1\}}|F_{\delta}-(1-b_{t_0,B}(\Psi))f|^2e^{v_{t_0,B}(\Psi)-\Psi}\\
		\le&\left(\frac{1}{\delta}e^{-t_1}+e^{-t_1}-e^{-t_0-B} \right)\int_D\frac{1}{B}\mathbb{I}_{\{-t_0-B<\Psi<-t_0\}}|f|^2e^{-\Psi}.
		\end{split}
	\end{equation}
	As $\int_{\{\Psi<-t_0\}}|f|^2<+\infty$ and $v_{t_0,B}(\Psi)-\Psi\ge0$, it follows from inequality \eqref{eq:0211d}  that
	\begin{displaymath}
		\begin{split}
			&\sup_{\delta}\int_{\{\Psi<-t_1\}}|F_{\delta}|^2\\
			\le&2\int_{\{\Psi<-t_1\}}|(1-b_{t_0,B}(\Psi))f|^2+2\sup_{\delta}\int_{\{\Psi<-t_1\}}|F_{\delta}-(1-b_{t_0,B}(\Psi))f|^2\\
			\le&2\int_{\{\Psi<-t_0\}}|f|^2+2\sup_{\delta}\left(\frac{1}{\delta}e^{-t_1}+e^{-t_1}-e^{-t_0-B} \right)\int_D\frac{1}{B}\mathbb{I}_{\{-t_0-B<\Psi<-t_0\}}|f|^2e^{-\Psi}\\
			<&+\infty,
		\end{split}
	\end{displaymath}
	yielding that there exists a subsequence of $\{F_{\delta}\}$ (also denoted by $\{F_{\delta}\}$) compactly convergent to a holomorphic function $\tilde F$ on $\{\Psi<-t_1\}$.
It follows from Fatou's Lemma and inequality \eqref{eq:0211d} that
\begin{displaymath}
	\begin{split}
		&\int_{\{\Psi<-t_1\}}|F-(1-b_{t_0,B}(\Psi))f|^2e^{v_{t_0,B}(\Psi)-\Psi}\\
		=&\int_{\{\Psi<-t_1\}}\lim_{\delta\rightarrow+\infty}|F_{\delta}-(1-b_{t_0,B}(\Psi))f|^2e^{v_{t_0,B}(\Psi)-\Psi}
		\\
		\le&\liminf_{\delta\rightarrow+\infty}\int_{\{\Psi<-t_1\}}|F_{\delta}-(1-b_{t_0,B}(\Psi))f|^2e^{v_{t_0,B}(\Psi)-\Psi}\\
		\le&\liminf_{\delta\rightarrow+\infty}\left(\frac{1}{\delta}e^{-t_1}+e^{-t_1}-e^{-t_0-B} \right)\int_D\frac{1}{B}\mathbb{I}_{\{-t_0-B<\Psi<-t_0\}}|f|^2e^{-\Psi}\\
		=&\left(e^{-t_1}-e^{-t_0-B} \right)\int_D\frac{1}{B}\mathbb{I}_{\{-t_0-B<\Psi<-t_0\}}|f|^2e^{-\Psi}.
	\end{split}
\end{displaymath}
Thus, Lemma \ref{l:L2'} holds.
\end{proof}

\subsection{$\mathcal{O}_{\mathbb{C}^n,o}$-module $I(a\Psi)_o$}\label{sec:module}
\

Let $F$ be a holomorphic function on a pseudoconvex domain $D\subset\mathbb{C}^n$ containing the origin $o\in\mathbb{C}^n$, and let $\psi$ be a plurisubharmonic function on $D$. Denote $\Psi:=\min\{\psi-2\log|F|,0\}$ and $\varphi:=2\max\{\psi,2\log|F|\}$. In this section, we discuss the $\mathcal{O}_{\mathbb{C}^n,o}$-module $I(a\Psi)_o$, where $a\ge0$.

Let $k$ be a positive integer. Denote $I_o:=I(0\Psi)_o$.

\begin{Lemma}
	\label{l:m1}For any $f_o\in I_o$, there exists a pseudoconvex domain $D_0\subset D$ containing $o$ and a holomorphic function $\tilde F$ on $D_0$ such that $(\tilde F,o)\in\mathcal{I}(k\varphi)_o$ and  $\int_{\{\Psi<-t\}\cap D_0}|\tilde F-fF^{2k}|^2e^{-k\varphi-k\Psi}<+\infty$ for some $t>0$.
\end{Lemma}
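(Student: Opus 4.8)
The plan is to unwind the definition of $f_o \in I_o = I(0\Psi)_o$ and feed it into Lemma \ref{l:L2'} (equivalently Lemma \ref{l:L2}) with the weight $\varphi = 2\max\{\psi,2\log|F|\}$, i.e. with $\delta = 1$ applied to $k\psi$, or more precisely applied to suitable multiples so that the ``$k$''-powers appear. First I would note that, since $f_o\in I_o$, by definition there exist $t_1\gg 1$ and a neighborhood $V_1$ of $o$ such that $f$ is holomorphic on $\{\Psi<-t_1\}\cap V_1$ with $\int_{\{\Psi<-t_1\}\cap V_1}|f|^2<+\infty$; shrinking, I may take a pseudoconvex $D_0\subset D$ with $o\in D_0\subset\subset V_1$ (for instance a small ball, which is pseudoconvex), so that $\int_{\{\Psi<-t_1\}\cap D_0}|f|^2<+\infty$. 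The target function to produce is $\tilde F$ on $D_0$ with $(\tilde F,o)\in\mathcal{I}(k\varphi)_o$ and $\int_{\{\Psi<-t\}\cap D_0}|\tilde F - fF^{2k}|^2 e^{-k\varphi-k\Psi}<+\infty$ for some $t>0$; note $fF^{2k}$ is holomorphic on $\{\Psi<-t_1\}\cap D_0$, and on that set $e^{-k\varphi}|F|^{4k}\le 1$ (in fact $\le e^{-2kt_1}$ off the zero set of $F$), so $fF^{2k}$ itself is in $L^2$ there against $e^{-k\varphi-k\Psi}$ up to the $e^{-k\Psi}$ factor, which is the point where the Hörmander-type estimate is needed.

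The key step is to apply Lemma \ref{l:L2} (the Appendix version, which outputs a function on all of $D$) to the weight $k\psi$ in place of $\psi$ and $F^{k}$ in place of $F$: then $\max\{k\psi,2\log|F^k|\}=k\max\{\psi,2\log|F|\}$ so $(1+\delta)\max\{k\psi,2k\log|F|\}$ with $\delta$ chosen equals $k\varphi$ when $1+\delta$ is taken so that $(1+\delta)k/\cdots$ — more carefully, I would instead take $\varphi_k:=k\cdot\varphi = 2k\max\{\psi,2\log|F|\}$ and observe it is of the form $(1+1)\max\{k\psi, 2\log|F^k|\}$, so Lemma \ref{l:L2} with $\delta=1$, with ``$\psi$'' $\to k\psi$, ``$F$'' $\to F^{k}$, and ``$f$'' $\to f$, and with $\Psi$ unchanged (since $\min\{k\psi-2k\log|F|,0\}=k\Psi$ only where $\Psi<0$... so I should rather scale $t_0$, using $\{\Psi<-t_1\}$), produces a holomorphic $\tilde F$ on $D$ (hence on $D_0$) with $\int_D |\tilde F-(1-b(\Psi))fF^{2k}|^2 e^{-k\varphi + v(\Psi) - \Psi}<+\infty$, the right side being finite because the truncation $b$ localizes the integral to the annulus $\{-t_1-B<\Psi<-t_1\}$ where $|f|^2 e^{-\Psi}$ is bounded by $e^{t_1+B}$ times the $L^2$ norm. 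From this: on $\{\Psi<-t_1-B\}$ one has $1-b_{t_1,B}(\Psi)=1$, so the integrand is $|\tilde F - fF^{2k}|^2 e^{-k\varphi+v(\Psi)-\Psi}$; since $v(\Psi)-\Psi$ is bounded on $D$ and $\ge 0$, finiteness of this integral over $\{\Psi<-t_1-B\}\cap D_0$ gives $\int_{\{\Psi<-t\}\cap D_0}|\tilde F - fF^{2k}|^2 e^{-k\varphi-k\Psi}<+\infty$ with $t=t_1+B$ (absorbing the bounded factor $e^{v(\Psi)-\Psi}e^{(k-1)\Psi}$, using $\Psi\le 0$ so $e^{(k-1)\Psi}\le 1$). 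Finally $(\tilde F,o)\in\mathcal{I}(k\varphi)_o$ follows by restricting the global estimate to a small neighborhood of $o$: near $o$, $v_{t_1,B}(\Psi)-\Psi$ and $(1-b(\Psi))fF^{2k}$ are bounded (indeed $fF^{2k}$ is either identically handled on the relevant set or $F$ vanishes), so $\int_{V}|\tilde F|^2 e^{-k\varphi}<+\infty$ for a neighborhood $V$ of $o$, which is exactly $(\tilde F,o)\in\mathcal{I}(k\varphi)_o$.

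The main obstacle I anticipate is bookkeeping around the degeneracy of $\Psi$ and $F$: one must be careful that $\Psi$ is only plurisubharmonic on $\{\Psi<0\}$ and is set to $0$ where $F=0$, so the identity $\min\{k\psi-2k\log|F|,0\}=k\Psi$ holds only on $\{\Psi<0\}$ (it fails to be $k\Psi$ as a function globally), which forces working with a fixed level $\{\Psi<-t_1\}$ rather than rescaling $\psi$ naively; and one must verify that the quantity $(1-b_{t_1,B}(\Psi))fF^{2k}$ makes sense and is in the right $L^2$ space globally on $D$ even though $f$ is only defined on $\{\Psi<-t_1\}$ — this works because $b_{t_1,B}(\Psi)\equiv 1$ on $\{\Psi\ge -t_1\}$ so $(1-b)f\equiv 0$ there and the product extends by zero. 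A secondary technical point is confirming $e^{-k\varphi}|F|^{4k}\le 1$ everywhere (immediate from $\varphi\ge 2\log|F|^{2k}$) so that the passage between the $F^{2k}$-twisted estimate on $D$ and the untwisted conclusion on $\{\Psi<-t\}\cap D_0$ (where $|F|^{4k}e^{-k\varphi}=1$) is clean.
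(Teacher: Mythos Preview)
Your overall strategy is the same as the paper's: apply Lemma~\ref{l:L2} with $\delta=1$, replacing $\psi$ by $k\psi$ and $F$ by $F^k$, so that the weight becomes $k\varphi=2k\max\{\psi,2\log|F|\}$ and $F^{1+\delta}$ becomes $F^{2k}$. However, you lose track of what happens to $\Psi$ under this substitution, and this produces a real gap. After the substitution the new ``$\Psi$'' is $\min\{k\psi-2\log|F^k|,0\}=k\Psi$ (this identity holds \emph{everywhere}, not only on $\{\Psi<0\}$, contrary to your parenthetical worry). Consequently the output of Lemma~\ref{l:L2} carries the weight $e^{-k\varphi+v(k\Psi)-k\Psi}$, not $e^{-k\varphi+v(\Psi)-\Psi}$ as you wrote. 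This is not cosmetic: from your stated estimate with exponent $v(\Psi)-\Psi$, the passage to $\int|\tilde F-fF^{2k}|^2e^{-k\varphi-k\Psi}<+\infty$ would require $e^{-(k-1)\Psi-v(\Psi)}$ to be bounded on $\{\Psi<-t_1-B\}$, which fails for every $k\ge2$ since $\Psi$ is unbounded below there. Your attempted ``absorbing the bounded factor $e^{v(\Psi)-\Psi}e^{(k-1)\Psi}$'' step is therefore the wrong inequality direction. With the correct exponent $v(k\Psi)-k\Psi$ the step is immediate, since $v(k\Psi)\ge -t_0-B$ gives $e^{-k\varphi-k\Psi}\le e^{t_0+B}e^{-k\varphi+v(k\Psi)-k\Psi}$.

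A smaller issue: for $(\tilde F,o)\in\mathcal{I}(k\varphi)_o$ you assert that $(1-b(\Psi))fF^{2k}$ is bounded near $o$, which is not justified (and need not hold). The paper instead uses $|F|^{4k}e^{-k\varphi}=1$ on $\{k\Psi<-t_0\}$ together with $b(k\Psi)=1$ on $\{k\Psi\ge -t_0\}$ to bound $\int_{D_0}|(1-b(k\Psi))fF^{2k}|^2e^{-k\varphi}\le\int_{\{k\Psi<-t_0\}\cap D_0}|f|^2<+\infty$, and then combines this with the main estimate (using $v(k\Psi)-k\Psi\ge0$) to conclude $\int_{D_0}|\tilde F|^2e^{-k\varphi}<+\infty$.
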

\begin{proof}
	For any $f_o\in I_o$, there exists a pseudoconvex domain $D_0\subset D$ containing $o$ and $t_0>0$ such that $f\in\mathcal{O}(\{k\Psi<-t_0\}\cap D_0)$ and $\int_{\{k\Psi<-t_0\}\cap D_0}|f|^2<+\infty$.  It follows from Lemma \ref{l:L2} that there exists a holomorphic function $\tilde F$ on $D_0$ such that
	\begin{equation}
		\label{eq:0220a}
		\begin{split}
			&\int_{D_0}|\tilde F-(1-b(k\Psi))fF^{2k}|^2e^{-k\varphi+v(k\Psi)-k\Psi}\\
			\le&(2-e^{-t_0-1})\int_{D_0}\mathbb{I}_{\{-t_0-1<k\Psi<-t_0\}}|f|^2e^{-k\Psi},
		\end{split}
	\end{equation}
	where $b(t)=\int_{-\infty}^t\mathbb{I}_{\{-t_0-1<s<-t_0\}}ds$ and $v(t)=\int_0^tb(s)ds$. Note that $v(t)\ge-t_0-1$ on $\mathbb{R}$ and
 $b(t)=0$ on $(-\infty,-t_0-1)$. Inequality \eqref{eq:0220a} implies that
	\begin{displaymath}
		\begin{split}
			&\int_{D_0\cap\{k\Psi<-t_0-1\}}|\tilde F-fF^{2k}|^2e^{-k\varphi-k\Psi}\\
			\le & e^{t_0+1}\int_{D_0}|\tilde F-(1-b(k\Psi))fF^{2k}|^2e^{-k\varphi+v(k\Psi)-k\Psi}\\
			\le & (2e^{t_0+1}-1)\int_{D_0}\mathbb{I}_{\{-t_0-1<k\Psi<-t_0\}}|f|^2e^{-k\Psi}\\
			< & +\infty.
		\end{split}
	\end{displaymath}
	Consequently, since $v(k\Psi)-k\Psi\ge0$, $b(t)=1$ on $[-t_0,+\infty)$, and $|F|^{4k}e^{-k\varphi}=1$ on $\{k\Psi<-t_0\}$, we deduce from Inequality \eqref{eq:0220a} that
\begin{displaymath}\begin{split}
		&\int_{D_0}|\tilde F|^2e^{-k\varphi}\\
		\le &2\int_{D_0}|(1-b(k\Psi))fF^{2k}|^2e^{-k\varphi}+2\int_{D_0}|\tilde F-(1-b(k\Psi))fF^{2k}|^2e^{-k\varphi}\\
		\le &2\int_{D_0\cap\{k\Psi<-t_0\}}|f|^2+2(2-e^{-t_0-1})\int_{D_0}\mathbb{I}_{\{-t_0-1<k\Psi<-t_0\}}|f|^2e^{-k\Psi}\\
		<&+\infty.
		\end{split}
	\end{displaymath}
Thus, Lemma \ref{l:m1} holds.
\end{proof}

 Let $f_o\in I_o$. Taking any  $(\tilde F,o)\in \mathcal{I}(k\varphi)_o$ and $(\tilde F_1,o)\in \mathcal{I}(k\varphi)_o$, 	if there exist $t_1>0$ and a neighborhood $D_1$ of $o$ such that $\int_{\{\Psi<-t_1\}\cap D_1}|\tilde F-fF^{2k}|^2e^{-k\varphi-k\Psi}<+\infty$ and $\int_{\{\Psi<-t_1\}\cap D_1}|\tilde F_1-fF^{2k}|^2e^{-k\varphi-k\Psi}<+\infty$, then we have
	\begin{equation}
		\label{eq:0220c}\int_{\{\Psi<-t_1\}\cap D_1}|\tilde F_1-\tilde F|^2e^{-k\varphi-k\Psi}<+\infty,
	\end{equation}
	and there exists a neighborhood $D_2$ of $o$ such that
	\begin{equation}
		\label{eq:0220d}
		\int_{D_2}|\tilde F-\tilde F_1|^2e^{-k\varphi}<+\infty.
	\end{equation}
	Combining inequality \eqref{eq:0220c} and inequality \eqref{eq:0220d}, we obtain that $(\tilde F-\tilde F_1,o)\in\mathcal{I}(k\varphi+k\Psi)_o$. Thus, according to Lemma \ref{l:m1}, there exists
	  a map
	  \[\tilde P:I_o\rightarrow \mathcal{I}(k\varphi)_o/\mathcal{I}(k\varphi+k\Psi)_o,\]
	  given by
	  $$\tilde P(f_o)=[(\tilde F,o)],$$
	  for any $f_o\in I_o$, where $(\tilde F,o)\in \mathcal{I}(k\varphi)_o$ such that  $\int_{\{\Psi<-t\}\cap D_0}|\tilde F-fF^{2k}|^2e^{-k\varphi-k\Psi}<+\infty$ for some $t>0$ and some neighborhood $D_0$ of $o$, and $[(\tilde F,o)]$ is the equivalence class of $(\tilde F,o)$ in $\mathcal{I}(k\varphi)_o/\mathcal{I}(k\varphi+k\Psi)_o$.
\begin{Lemma}\label{l:m2}
	$\tilde P$ is an  $\mathcal{O}_{\mathbb{C}^n,o}$-module homomorphism, and $\mathrm{Ker}(\tilde P)=I(k\Psi)_o$.
\end{Lemma}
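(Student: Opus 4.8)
The plan is to verify in turn that $\tilde P$ respects addition and the $\mathcal{O}_{\mathbb{C}^n,o}$-module structure, and then to identify its kernel; well-definedness of $\tilde P$ has already been arranged via inequalities \eqref{eq:0220c} and \eqref{eq:0220d}. For additivity, given $f_o,g_o\in I_o$ with representatives $(\tilde F,o),(\tilde G,o)\in\mathcal{I}(k\varphi)_o$ furnished by Lemma \ref{l:m1}, I would pass to a common small neighborhood $D_0$ of $o$ and a common $t>0$, and use $|a+b|^2\le 2|a|^2+2|b|^2$ to see that $\int_{\{\Psi<-t\}\cap D_0}|(\tilde F+\tilde G)-(f+g)F^{2k}|^2e^{-k\varphi-k\Psi}<+\infty$; since $(\tilde F+\tilde G,o)\in\mathcal{I}(k\varphi)_o$, it is a legitimate representative for $\tilde P((f+g)_o)$, so $\tilde P(f_o+g_o)=[(\tilde F,o)]+[(\tilde G,o)]=\tilde P(f_o)+\tilde P(g_o)$. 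For $\mathcal{O}_{\mathbb{C}^n,o}$-linearity, given $(h,o)\in\mathcal{O}_{\mathbb{C}^n,o}$ and a representative $(\tilde F,o)$ of $\tilde P(f_o)$, the germ $h$ is bounded near $o$, so $\int|h\tilde F-hfF^{2k}|^2e^{-k\varphi-k\Psi}\le(\sup|h|^2)\int|\tilde F-fF^{2k}|^2e^{-k\varphi-k\Psi}<+\infty$ on a smaller neighborhood, while $(h\tilde F,o)\in\mathcal{I}(k\varphi)_o$ since $\mathcal{I}(k\varphi)_o$ is an ideal; hence $h\tilde F$ represents $\tilde P((hf)_o)$ and $\tilde P((h,o)\cdot f_o)=(h,o)\cdot\tilde P(f_o)$. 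This gives that $\tilde P$ is an $\mathcal{O}_{\mathbb{C}^n,o}$-module homomorphism.

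The kernel computation rests on one elementary observation: for every $t>0$ one has $\{\Psi<-t\}\subset\{F\neq 0\}$ and $|F|^{4k}e^{-k\varphi}=1$ there, so on $\{\Psi<-t\}$ the weighted integrand rewrites as $|\tilde F-fF^{2k}|^2e^{-k\varphi-k\Psi}=|\tilde FF^{-2k}-f|^2e^{-k\Psi}$. To prove $\mathrm{Ker}(\tilde P)\subseteq I(k\Psi)_o$, take $h_o\in I_o$ with $\tilde P(h_o)=0$ and a representative $(\tilde F,o)$, so $(\tilde F,o)\in\mathcal{I}(k\varphi+k\Psi)_o$ and $\int_{\{\Psi<-t\}\cap D_0}|\tilde F-hF^{2k}|^2e^{-k\varphi-k\Psi}<+\infty$ for some $t>0$. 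Restricting the first condition to $\{\Psi<-t\}$ and applying the observation, both $\int_{\{\Psi<-t\}\cap W}|\tilde FF^{-2k}|^2e^{-k\Psi}$ and $\int_{\{\Psi<-t\}\cap W}|\tilde FF^{-2k}-h|^2e^{-k\Psi}$ are finite on a small neighborhood $W$ of $o$, hence so is $\int_{\{\Psi<-t\}\cap W}|h|^2e^{-k\Psi}$ by the triangle inequality in $L^2(\{\Psi<-t\}\cap W,e^{-k\Psi})$, i.e. $h_o\in I(k\Psi)_o$.

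Conversely, for $I(k\Psi)_o\subseteq\mathrm{Ker}(\tilde P)$, take $h_o\in I(k\Psi)_o$ with representative $(\tilde F,o)$, so $(\tilde F,o)\in\mathcal{I}(k\varphi)_o$, $\int_{\{\Psi<-t\}\cap D_0}|\tilde F-hF^{2k}|^2e^{-k\varphi-k\Psi}<+\infty$, and $\int_{\{\Psi<-t'\}\cap V}|h|^2e^{-k\Psi}<+\infty$ for some $t'>0$; set $T=\max\{t,t'\}$ and split a small neighborhood $W$ of $o$ as $(\{\Psi\ge-T\}\cap W)\cup(\{\Psi<-T\}\cap W)$. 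On the first piece $e^{-k\Psi}\le e^{kT}$, so $\int|\tilde F|^2e^{-k\varphi-k\Psi}\le e^{kT}\int_W|\tilde F|^2e^{-k\varphi}<+\infty$; on the second piece, writing $\tilde F=(\tilde F-hF^{2k})+hF^{2k}$ and using the observation together with $|a+b|^2\le 2|a|^2+2|b|^2$ (note $|hF^{2k}|^2e^{-k\varphi-k\Psi}=|h|^2e^{-k\Psi}$ there) gives $\int_{\{\Psi<-T\}\cap W}|\tilde F|^2e^{-k\varphi-k\Psi}<+\infty$. Adding the two pieces, $(\tilde F,o)\in\mathcal{I}(k\varphi+k\Psi)_o$, so $\tilde P(h_o)=0$.

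The argument is essentially bookkeeping: each application of Lemma \ref{l:m1} comes with its own domain and its own threshold $t$, so throughout one passes to a common shrunken neighborhood and to the larger of the relevant thresholds. I do not expect a genuine obstacle; the one point requiring care is that the identity $|F|^{4k}e^{-k\varphi}=1$ is valid only on $\{\Psi<0\}$ (equivalently, where $\psi<2\log|F|$), which is exactly why every estimate using it must be localized to a set $\{\Psi<-t\}$ with $t>0$, the complementary region $\{\Psi\ge-T\}$ being handled trivially by the boundedness of $e^{-k\Psi}$ there.
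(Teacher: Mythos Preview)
Your proof is correct and follows essentially the same approach as the paper: the additivity and $\mathcal{O}_{\mathbb{C}^n,o}$-linearity arguments are identical, and the kernel computation rests on the same key identity $|F|^{4k}e^{-k\varphi}=1$ on $\{\Psi<0\}$. The paper's version of the kernel step is slightly more economical---it observes directly that $\tilde P(f_o)=0$ if and only if $\tilde F=0$ is an admissible representative, which immediately rewrites as $\int_{\{\Psi<-t\}\cap D_3}|f|^2e^{-k\Psi}<+\infty$---whereas you take an arbitrary representative and split $W$ into $\{\Psi\ge -T\}$ and $\{\Psi<-T\}$; but this is a stylistic difference, not a different method.
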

	 \begin{proof}
	 	For any  $f_o\in I_o$ and $\tilde f_o\in I_o$, take $[(\tilde F,o)]=\tilde P(f_o)$ and $[(\tilde F_1,o)]=\tilde P(\tilde f_o)$. Then we have $(\tilde F+\tilde F_1,o)\in\mathcal{I}(k\varphi+k\Psi)_o$, and
		\begin{flalign*}
			\begin{split}
				&\int_{\{\Psi<-t_1\}\cap D_1}|\tilde F_1+\tilde F-(f+\tilde f)F^{2k}|^2e^{-k\varphi-k\Psi}\\
				\le & 2\int_{\{\Psi<-t_1\}\cap D_1}|\tilde F-fF^{2k}|^2e^{-k\varphi-k\Psi}+2\int_{\{\Psi<-t_1\}\cap D_1}|\tilde F_1-\tilde fF^{2k}|^2e^{-k\varphi-k\Psi}\\
				<&+\infty,
			\end{split}
		\end{flalign*}
		for some $t_1>0$ and some neighborhood $D_1$ of $o$, which shows that
		\[\tilde P(f_o+\tilde f_o)=\tilde P((f+\tilde f)_o)=[(\tilde F+\tilde F_1,o)]=\tilde P(f_o)+\tilde P(\tilde f_o).\]

		For  any $(h,o)\in\mathcal{O}_{\mathbb{C}^n,o}$, we have $(h\tilde F,o)\in\mathcal{I}(k\varphi+k\Psi)_o$ and
		\[\int_{\{\Psi<-t_2\}\cap D_2}|h\tilde F-hfF^{2k}|^2e^{-k\varphi-k\Psi}\le C\int_{\{\Psi<-t_2\}\cap D_2}|\tilde F-fF^{2k}|^2e^{-k\varphi-k\Psi}<+\infty,\]
		for some $C>0$, $t_2>0$ and a neighborhood $D_2$ of $o$, which shows that
		\[\tilde P((h,o)\cdot f_o)=\tilde P((hf)_o)=[(h\tilde F,o)]=(h,o)\cdot\tilde P(f_o).\]

		Thus, $\tilde P$ is an  $\mathcal{O}_{\mathbb{C}^n,o}$-module homomorphism from $I_o$ to $\mathcal{I}(k\varphi)_o/\mathcal{I}(k\varphi+k\Psi)_o$.
	
	Now, we prove $\mathrm{Ker}(\tilde P)=I(k\Psi)_o$. For any $f_o\in I(k\Psi)_o$, $\tilde P(f_o)=0$ holds if and only if  there exist a neighborhood $D_3$ of $o$ and $t_3>0$ such that
	$$\int_{\{\Psi<-t_3\}\cap D_3}|f_oF^{2k}|^2e^{-k\varphi-k\Psi}=\int_{\{\Psi<-t_3\}\cap D_3}|f_o|^2e^{-k\Psi}<+\infty,$$ i.e. $f_o\in I(k\Psi)_o$.
	 \end{proof}
	
	Following from  Lemma \ref{l:m2}, there exists an $\mathcal{O}_{\mathbb{C}^n,o}$-module homomorphism
	\[P:I_o/I(k\Psi)_o\rightarrow \mathcal{I}(k\varphi)_o/\mathcal{I}(k\varphi+k\Psi)_o,\]
	given by
	  $$P([f_o]_1)=\tilde P(f_o),$$
	for any $f_o\in I_o$, where $[f_o]_1$ is the equivalence class of $f_o$ in $I_o/I(k\Psi)_o$.
	
	 Note that $|F^{2k}|^2e^{-k\varphi}=1$ on $\{\Psi<0\}$. For any $(\tilde F,o)\in\mathcal{I}(k\varphi)_o$, it is clear that $\left(\frac{\tilde F}{F^{2k}}\right)_o\in I_o$. Moreover, if $(\tilde F,o)\in\mathcal{I}(k\varphi+k\Psi)_o$, we have $\left(\frac{\tilde F}{F^{2k}}\right)_o\in I(k\Psi)_o$. Hence, there exists an $\mathcal{O}_{\mathbb{C}^n,o}$-module homomorphism
	 \[Q:\mathcal{I}(k\varphi)_o/\mathcal{I}(k\varphi+k\Psi)_o\rightarrow I_o/I(k\Psi)_o,\]
	 given by
	  $$Q([(\tilde F,o)])=\left[\left(\frac{\tilde F}{F^{2k}}\right)_o\right]_1,$$
	  for any $[(\tilde F,o)]\in\mathcal{I}(k\varphi)_o/\mathcal{I}(k\varphi+k\Psi)_o$.

\begin{Lemma}
	\label{l:m3}
	$P$ is an  $\mathcal{O}_{\mathbb{C}^n,o}$-module isomorphism from $I_o/I(k\Psi)_o$ to $\mathcal{I}(k\varphi)_o/\mathcal{I}(k\varphi+k\Psi)_o$ and $P^{-1}=Q$.
\end{Lemma}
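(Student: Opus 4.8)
The plan is to verify directly that $P$ and $Q$ are mutually inverse; since both have already been shown to be $\mathcal{O}_{\mathbb{C}^n,o}$-module homomorphisms, this suffices to conclude that $P$ is an isomorphism with $P^{-1}=Q$. The single fact driving both verifications is that $|F^{2k}|^{2}e^{-k\varphi}=1$ on $\{\Psi<0\}$, equivalently $e^{-k\varphi}=|F|^{-4k}$ there (and in particular $F$ does not vanish on $\{\Psi<0\}$, since $F(z)=0$ forces $\Psi(z)=0$); this identity converts the weighted integrals defining $\tilde P$ into the unweighted integrals defining membership in $I(k\Psi)_o$.

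First I would check $Q\circ P=\mathrm{id}$. Take $f_o\in I_o$ and let $(\tilde F,o)\in\mathcal{I}(k\varphi)_o$ be a representative with $\int_{\{\Psi<-t\}\cap D_0}|\tilde F-fF^{2k}|^{2}e^{-k\varphi-k\Psi}<+\infty$ for some $t>0$ and some neighborhood $D_0$ of $o$, so that $P([f_o]_1)=[(\tilde F,o)]$ and hence $Q(P([f_o]_1))=[(\tilde F/F^{2k})_o]_1$. Using $e^{-k\varphi}=|F|^{-4k}$ on $\{\Psi<0\}$, this finite integral equals $\int_{\{\Psi<-t\}\cap D_0}|\tilde F/F^{2k}-f|^{2}e^{-k\Psi}<+\infty$, so $(\tilde F/F^{2k})_o-f_o\in I(k\Psi)_o$, i.e.\ $[(\tilde F/F^{2k})_o]_1=[f_o]_1$.

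Next I would check $P\circ Q=\mathrm{id}$. Take $(\tilde F,o)\in\mathcal{I}(k\varphi)_o$ and set $g:=\tilde F/F^{2k}$, which (as recalled just before the lemma) satisfies $g_o\in I_o$, so that $Q([(\tilde F,o)])=[g_o]_1$ and $P([g_o]_1)=\tilde P(g_o)$. Here I would invoke the well-definedness of $\tilde P$ established via \eqref{eq:0220c} and \eqref{eq:0220d}: since $gF^{2k}=\tilde F$, the function $\tilde F$ is itself an admissible representative for $\tilde P(g_o)$ (the defining integral $\int|\tilde F-gF^{2k}|^{2}e^{-k\varphi-k\Psi}$ is identically $0$), hence $\tilde P(g_o)=[(\tilde F,o)]$, which is exactly the class we started with.

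Combining the two computations yields $P^{-1}=Q$, and in particular that $P$ is an isomorphism. I expect the only place demanding care is the second computation: one must use that $\tilde P$ is well defined, so that any two admissible representatives differ by an element of $\mathcal{I}(k\varphi+k\Psi)_o$, in order to conclude that the specific representative $\tilde F$ fed into $Q$ is returned by $\tilde P$. Apart from that, both steps amount to unwinding the definitions together with the pointwise identity $e^{-k\varphi}=|F|^{-4k}$ on $\{\Psi<0\}$.
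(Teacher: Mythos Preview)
Your proof is correct and essentially the same as the paper's. The only minor difference is that the paper verifies $P\circ Q=\mathrm{id}$ (exactly as in your second step) and then, rather than also checking $Q\circ P=\mathrm{id}$ directly, invokes the injectivity of $P$ already established in Lemma~\ref{l:m2} (since $\mathrm{Ker}(\tilde P)=I(k\Psi)_o$); your direct verification of $Q\circ P=\mathrm{id}$ is just an unwinding of that same computation.
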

	\begin{proof}
		According to the definitions of $Q$ and $P$, we have that $$P\circ Q([(\tilde F,o)])=[(\tilde F,o)]$$
		 for any $[(\tilde F,o)]\in\mathcal{I}(k\varphi)_o/\mathcal{I}(k\varphi+k\Psi)_o$, which implies  that $P$ is surjective. Lemma \ref{l:m2} shows that $P$ is injective. Thus, we get that $P$ is an  $\mathcal{O}_{\mathbb{C}^n,o}$-module isomorphism and $P^{-1}=Q$.
	\end{proof}
Let $a\in[0,k)$. Denote $P_a:=P|_{I(a\Psi)_o/I(k\Psi)_o}$, where $I(a\Psi)_o/I(k\Psi)_o$ is an $\mathcal{O}_{\mathbb{C}^n,o}$-submodule of $I_o/I(k\Psi)_o$. It is clear that $k\varphi+a\Psi$ is a plurisubharmonic function on $D$. The the following lemma gives an isomorphism between $I(a\Psi)_o/I(k\Psi)_o$ and $\mathcal{I}(k\varphi+a\Psi)_o/\mathcal{I}(k\varphi+k\Psi)_o$.
\begin{Lemma}
	\label{l:m4}
	$P_a$ is an  $\mathcal{O}_{\mathbb{C}^n,o}$-module isomorphism from $I(a\Psi)_o/I(k\Psi)_o$ to $\mathcal{I}(k\varphi+a\Psi)_o/\mathcal{I}(k\varphi+k\Psi)_o$.
\end{Lemma}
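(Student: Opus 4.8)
The plan is to realize $P_a$ as the restriction of the isomorphism $P$ of Lemma \ref{l:m3} and to pin down precisely which submodule it lands in. Since $P_a=P|_{I(a\Psi)_o/I(k\Psi)_o}$ and $P$ is injective by Lemma \ref{l:m3}, $P_a$ is automatically injective, so the whole content is identifying the image with $\mathcal{I}(k\varphi+a\Psi)_o/\mathcal{I}(k\varphi+k\Psi)_o$. Note first that, since $a\le k$ and $\Psi\le0$, one has $k\varphi+k\Psi\le k\varphi+a\Psi\le k\varphi$ on $D$, whence $\mathcal{I}(k\varphi+k\Psi)_o\subseteq\mathcal{I}(k\varphi+a\Psi)_o\subseteq\mathcal{I}(k\varphi)_o$, so that $\mathcal{I}(k\varphi+a\Psi)_o/\mathcal{I}(k\varphi+k\Psi)_o$ is indeed a submodule of $\mathcal{I}(k\varphi)_o/\mathcal{I}(k\varphi+k\Psi)_o$.

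For the inclusion $P_a\big(I(a\Psi)_o/I(k\Psi)_o\big)\subseteq\mathcal{I}(k\varphi+a\Psi)_o/\mathcal{I}(k\varphi+k\Psi)_o$, I take $f_o\in I(a\Psi)_o$, fix $t\gg1$, a neighborhood $V$ of $o$, and (by the construction of $\tilde P$ preceding Lemma \ref{l:m2}, via Lemma \ref{l:m1}) a neighborhood $D_0$ of $o$ and $(\tilde F,o)\in\mathcal{I}(k\varphi)_o$ with $\int_{\{\Psi<-t\}\cap V}|f|^2e^{-a\Psi}<+\infty$ and $\int_{\{\Psi<-t\}\cap D_0}|\tilde F-fF^{2k}|^2e^{-k\varphi-k\Psi}<+\infty$, so that $P_a([f_o]_1)=[(\tilde F,o)]$. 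On $\{\Psi<0\}$ we have $e^{-a\Psi}\le e^{-k\Psi}$ and $|F^{2k}|^2e^{-k\varphi}=1$; hence, on a small neighborhood $U\subseteq V\cap D_0$ of $o$,
\[
\int_{\{\Psi<-t\}\cap U}|\tilde F|^2e^{-k\varphi-a\Psi}\le 2\int_{\{\Psi<-t\}\cap U}|\tilde F-fF^{2k}|^2e^{-k\varphi-k\Psi}+2\int_{\{\Psi<-t\}\cap U}|f|^2e^{-a\Psi}<+\infty,
\]
while on $\{\Psi\ge-t\}$ one has $e^{-a\Psi}\le e^{at}$ and (shrinking $U$) $\int_U|\tilde F|^2e^{-k\varphi}<+\infty$ because $(\tilde F,o)\in\mathcal{I}(k\varphi)_o$; adding the two pieces gives $(\tilde F,o)\in\mathcal{I}(k\varphi+a\Psi)_o$, as wanted.

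For surjectivity, given $(\tilde F,o)\in\mathcal{I}(k\varphi+a\Psi)_o\subseteq\mathcal{I}(k\varphi)_o$, set $h_o:=\big(\tilde F/F^{2k}\big)_o$, which lies in $I_o$ and satisfies $Q([(\tilde F,o)])=[h_o]_1$. Since $|1/F^{2k}|^2=e^{-k\varphi}$ on $\{\Psi<0\}$, for $t\gg1$ and a small neighborhood $U$ of $o$ we get $\int_{\{\Psi<-t\}\cap U}|h|^2e^{-a\Psi}=\int_{\{\Psi<-t\}\cap U}|\tilde F|^2e^{-k\varphi-a\Psi}\le\int_U|\tilde F|^2e^{-(k\varphi+a\Psi)}<+\infty$, so $h_o\in I(a\Psi)_o$ and $[h_o]_1\in I(a\Psi)_o/I(k\Psi)_o$. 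By Lemma \ref{l:m3}, $P_a([h_o]_1)=P\big(Q([(\tilde F,o)])\big)=[(\tilde F,o)]$. Thus $P_a$ is surjective onto $\mathcal{I}(k\varphi+a\Psi)_o/\mathcal{I}(k\varphi+k\Psi)_o$, hence, together with the injectivity above, an isomorphism.

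The only slightly delicate point is that membership in $I(a\Psi)_o$ and in $\mathcal{I}(k\varphi+a\Psi)_o$ are integrability conditions over different regions --- a sublevel set of $\Psi$ versus a full neighborhood of $o$ --- but this is reconciled exactly by the trivial bound $e^{-a\Psi}\le e^{at}$ on $\{\Psi\ge-t\}$ together with $(\tilde F,o)\in\mathcal{I}(k\varphi)_o$ and the identity $|F^{2k}|^2e^{-k\varphi}=1$ on $\{\Psi<0\}$; no analytic input beyond Lemmas \ref{l:m1}--\ref{l:m3} is needed.
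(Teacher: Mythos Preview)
Your proof is correct and follows essentially the same approach as the paper: both reduce to identifying $\mathrm{Im}(P_a)$ with $\mathcal{I}(k\varphi+a\Psi)_o/\mathcal{I}(k\varphi+k\Psi)_o$, proving the forward inclusion by the estimate $\int_{\{\Psi<-t\}\cap U}|\tilde F|^2e^{-k\varphi-a\Psi}<+\infty$ combined with $(\tilde F,o)\in\mathcal{I}(k\varphi)_o$, and the reverse by noting that $(\tilde F/F^{2k})_o\in I(a\Psi)_o$ for $(\tilde F,o)\in\mathcal{I}(k\varphi+a\Psi)_o$ and invoking $P^{-1}=Q$. Your write-up is slightly more explicit about the reconciliation between integrability on $\{\Psi<-t\}$ versus a full neighborhood, but the argument is the same.
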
	
\begin{proof}
	It suffices to prove $\mathrm{Im}(P_a)=\mathcal{I}(k\varphi+a\Psi)_o/\mathcal{I}(k\varphi+k\Psi)_o$. For any $f_o\in I(a\Psi)_o$, taking $[(\tilde F,o)]=P_a([f_o]_1)=P([f_o]_1)$, we have that
	 there exist $t>0$ and a neighborhood $D_0$ of $o$ such that
	\begin{equation}
\label{eq:0221a}
		\begin{split}
			&\int_{\{\Psi<-t\}\cap D_0}|\tilde F|^2e^{-k\varphi-a\Psi}\\
			\le&2\int_{\{\psi<-t\}\cap D_0}|f|^2e^{-a\Psi}+2\int_{\{\Psi<-t\}\cap D_0}|\tilde F-fF^{2k}|^2e^{-k\varphi-a\Psi}\\
			<&+\infty.
		\end{split}
	\end{equation}
	Combining $(\tilde F,o)\in\mathcal{I}(k\varphi)_o$ and inequality \eqref{eq:0221a}, we get $(\tilde F,o)\in\mathcal{I}(k\varphi+a\Psi)_o$, which shows $\mathrm{Im}(P_a)\subset\mathcal{I}(k\varphi+a\Psi)_o/\mathcal{I}(k\varphi+k\Psi)_o$. For any $(\tilde F_1,o)\in\mathcal{I}(k\varphi+a\Psi)_o$, it is clear that $\left(\frac{\tilde F_1}{F^{2k}}\right)_o\in I(a\Psi)_o$. Then we obtain
	\[P^{-1}([(\tilde F_1,o)])=Q([(\tilde F_1,o)])\in I(a\Psi)_o/I(k\Psi)_o,\]
	which implies that $\mathrm{Im}(P_a)=\mathcal{I}(k\varphi+a\Psi)_o/\mathcal{I}(k\varphi+k\Psi)_o$.
\end{proof}

According to the definition of $I(a\Psi)_o$, we have that $I(a\Psi)_o\subset I(a'\Psi)_o$ for any $0\le a'<a<+\infty$. Denote
\[I_+(a\Psi)_o:=\bigcup_{p>a}I(p\Psi)_o,\]
which is an $\mathcal{O}_{\mathbb{C}^n,o}$-submodule of $I_o$, where $a\ge0$.
\begin{Lemma}
	\label{l:m5} There exists $a'>a$ such that $I(a'\Psi)_o=I_+(a\Psi)_o$ for any $a\ge0$.
\end{Lemma}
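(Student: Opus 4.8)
The plan is to derive the asserted stabilization from the ascending chain condition in the Noetherian ring $\mathcal{O}_{\mathbb{C}^n,o}$, transported to the modules $I(p\Psi)_o$ through the isomorphism of Lemma \ref{l:m3}. Fix $a\ge0$ and choose an integer $k>a+1$. For every $p$ with $a\le p<k$ the monotonicity recorded just before the statement gives $I(k\Psi)_o\subset I(p\Psi)_o\subset I_o$, so $I(p\Psi)_o/I(k\Psi)_o$ is a well-defined $\mathcal{O}_{\mathbb{C}^n,o}$-submodule of $I_o/I(k\Psi)_o$; by Lemma \ref{l:m3} the latter module is isomorphic to the subquotient $\mathcal{I}(k\varphi)_o/\mathcal{I}(k\varphi+k\Psi)_o$ of $\mathcal{O}_{\mathbb{C}^n,o}$, hence it is a Noetherian $\mathcal{O}_{\mathbb{C}^n,o}$-module.

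Next I would rewrite $I_+(a\Psi)_o$ as an ascending union along a sequence. Put $p_n:=a+\frac1n$. Since $I(p\Psi)_o$ increases as $p$ decreases, for each $p>a$ there is an $n$ with $p_n<p$, hence $I(p\Psi)_o\subset I(p_n\Psi)_o$; combined with $I(p_n\Psi)_o\subset I_+(a\Psi)_o$ this shows $I_+(a\Psi)_o=\bigcup_{n\ge1}I(p_n\Psi)_o$. Because $p_n\le a+1<k$, all the quotients $I(p_n\Psi)_o/I(k\Psi)_o$ make sense and form an increasing chain of submodules of the Noetherian module $I_o/I(k\Psi)_o$; this chain therefore stabilizes, so there is $N$ with $I(p_n\Psi)_o/I(k\Psi)_o=I(p_N\Psi)_o/I(k\Psi)_o$, and hence $I(p_n\Psi)_o=I(p_N\Psi)_o$ (all of them containing $I(k\Psi)_o$), for every $n\ge N$.

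Setting $a':=p_N>a$, it then follows that $I_+(a\Psi)_o=\bigcup_{n\ge1}I(p_n\Psi)_o=\bigcup_{n\ge N}I(p_n\Psi)_o=I(p_N\Psi)_o=I(a'\Psi)_o$, which is the desired conclusion. The substantive input is the Noetherianity provided by Lemma \ref{l:m3}, so I do not expect a deep obstacle; the main point requiring care, and the nearest thing to a difficulty, is keeping the directions of the inclusions straight---a larger coefficient in front of the nonpositive weight $\Psi$ yields a heavier factor $e^{-p\Psi}$ and hence a smaller module $I(p\Psi)_o$---and choosing the integer $k$ strictly larger than $a+1$ so that all the quotients by $I(k\Psi)_o$ appearing in the argument are legitimate.
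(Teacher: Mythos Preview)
Your argument is correct and follows essentially the same route as the paper: both pass through the isomorphism of Lemma \ref{l:m3} to a Noetherian object and use Noetherianity to force stabilization. The only cosmetic difference is that the paper works with the ideal $L=\bigcup_{a<p<k}\mathcal{I}(k\varphi+p\Psi)_o\subset\mathcal{O}_{\mathbb{C}^n,o}$ and invokes finite generation of $L$ to pick finitely many generators lying in some $I(a'\Psi)_o$, whereas you invoke the ascending chain condition directly on the quotients $I(p_n\Psi)_o/I(k\Psi)_o$ inside the Noetherian module $I_o/I(k\Psi)_o$; these are two equivalent phrasings of the same Noetherian step.
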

\begin{proof}
By the definition of $I_+(a\Psi)_o$, $I(p\Psi)_o\subset I_+(a\Psi)_o$ for any $p>a$. Then it suffices to prove that  there exists $a'>a$ such that $I_+(a\Psi)_o\subset I(a'\Psi)_o$.

 Let $k>a$ be an integer. It is clear that $\mathcal{I}(k\varphi+a\Psi)_o \subset \mathcal{I}(k\varphi+a'\Psi)$ for any $0\le a'<a\le k$. Denote
	\[\mathfrak{I}:=\bigcup_{a<p<k}\mathcal{I}(k\varphi+p\Psi)_o,\]
	 which is an ideal of $\mathcal{O}_{\mathbb{C}^n,o}$. It follows from Lemma \ref{l:m3} that $P|_{I_+(a\Psi)_o/I(k\Psi)_o}$ is an  $\mathcal{O}_{\mathbb{C}^n,o}$-module isomorphism from $I_+(a\Psi)_o/I(k\Psi)_o$ to $\mathfrak{I}/\mathcal{I}(k\varphi+k\Psi)_o$. As $\mathcal{O}_{\mathbb{C}^n,o}$ is a Noetherian ring (see \cite{hormander}), we get that $\mathfrak{I}$ is finitely generated. Hence, there exists a finite set $\{(f_1)_o,\ldots,(f_m)_o\}\subset I_+(a\Psi)_o$, which satisfies that for any $f_o\in I_+(a\Psi)_o$, there exists $(h_j,o)\in\mathcal{O}_{\mathbb{C}^n,o}$ for any $1\le j\le m$, such that
	 $$f_o-\sum_{j=1}^{m}(h_j,o)\cdot (f_j)_o\in I(k\Psi)_o.$$
	  By the definition of $I_+(a\Psi)_o$, there exists $a'\in(a,k)$ such that $\{(f_1)_o,\ldots,(f_m)_o\}\subset I(a'\Psi)_o$. As $(h_j,o)\cdot (f_j)_o\in I(a'\Psi)_o$ for any $1\le j\le m$ and $I(k\Psi)_o\subset I(a'\Psi)_o$, we obtain that $I_+(a\Psi)_o\subset I(a'\Psi)_o$.
	
	  Thus, Lemma \ref{l:m5} holds.
\end{proof}

We recall the closedness of submodules, which can be referred to  \cite{G-R} and will be used in the proof of Lemma \ref{l:converge}.

\begin{Lemma}[see \cite{G-R}]
\label{closedness}
Let $N$ be a submodule of $\mathcal O_{\mathbb C^n,o}^q$, $1\leq q<+ \infty$, let $f_j\in\mathcal O_{\mathbb C^n,o}(U)^q$ be a sequence of $q-$tuples holomorphic in an open neighborhood $U$ of the origin $o$. Assume that the $f_j$ converge uniformly in $U$ towards to a $q-$tuples $f\in\mathcal{O}_{\mathbb C^n,o}(U)^q$, assume furthermore that all germs $(f_{j},o)$ belong to $N$. Then $(f,o)\in N$.	
\end{Lemma}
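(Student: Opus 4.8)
The plan is to prove the closedness of $N$ by a soft argument that combines the Cauchy estimates with the Krull intersection theorem; no division-with-bounds machinery is needed for this statement.

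First I would translate the analytic hypothesis into an algebraic condition at each finite order. Let $\mathfrak m$ denote the maximal ideal of $\mathcal O_{\mathbb C^n,o}$, and for each integer $k\ge 0$ let $\pi_k\colon \mathcal O_{\mathbb C^n,o}^q\to \mathcal O_{\mathbb C^n,o}^q/\mathfrak m^{k+1}\mathcal O_{\mathbb C^n,o}^q$ be the natural projection; its target is a finite-dimensional $\mathbb C$-vector space, naturally identified with the space of $q$-tuples of polynomials of degree $\le k$. Since all the $f_j$ and $f$ are holomorphic on the fixed neighborhood $U$ of $o$ and $f_j\to f$ uniformly on $U$, the Cauchy integral formula applied on a small polydisc around $o$ contained in $U$ shows that every Taylor coefficient of $f_j$ at $o$ converges to the corresponding coefficient of $f$; equivalently, $\pi_k(f_j,o)\to\pi_k(f,o)$ in $\mathcal O_{\mathbb C^n,o}^q/\mathfrak m^{k+1}\mathcal O_{\mathbb C^n,o}^q$ for every $k$.

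Next I would fix $k$ and consider $\overline N_k:=\pi_k(N)$. Being the image of an $\mathcal O_{\mathbb C^n,o}$-submodule under a linear map, $\overline N_k$ is a linear subspace of a finite-dimensional vector space, hence closed. Since $(f_j,o)\in N$ we have $\pi_k(f_j,o)\in\overline N_k$ for all $j$, and passing to the limit gives $\pi_k(f,o)\in\overline N_k$, i.e. $(f,o)\in N+\mathfrak m^{k+1}\mathcal O_{\mathbb C^n,o}^q$. As this holds for every $k\ge 0$, the final step is to let $k\to\infty$. Here I would use that $\mathcal O_{\mathbb C^n,o}$ is Noetherian (already used in the proof of Lemma~\ref{l:m5}, citing \cite{hormander}), so that $\mathcal O_{\mathbb C^n,o}^q/N$ is a finitely generated module over the Noetherian local ring $\mathcal O_{\mathbb C^n,o}$; the Krull intersection theorem then gives $\bigcap_{k\ge 1}\mathfrak m^{k}\bigl(\mathcal O_{\mathbb C^n,o}^q/N\bigr)=0$, that is, $\bigcap_{k\ge 1}\bigl(N+\mathfrak m^{k}\mathcal O_{\mathbb C^n,o}^q\bigr)=N$. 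Combining this with the previous paragraph yields $(f,o)\in N$, as required.

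I do not expect a serious obstacle: the argument is essentially a packaging of the Cauchy estimates (which linearize the membership condition at each finite order, where subspaces are automatically closed) together with the commutative algebra (Noetherianity and Krull) that the paper already has at hand. The only point needing care is the bookkeeping between topologies -- one must check that convergence of germs in the sense of the hypothesis genuinely forces convergence of every finite jet, and that the jet spaces $\mathcal O_{\mathbb C^n,o}^q/\mathfrak m^{k+1}\mathcal O_{\mathbb C^n,o}^q$ are finite-dimensional. A tempting but unnecessary detour would be to deduce closedness from a Weierstrass--Grauert division theorem with sup-norm bounds for a standard basis of $N$; I would avoid it, as the route above is shorter and self-contained given what is already available.
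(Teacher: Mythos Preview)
Your proof is correct. The paper itself does not prove Lemma~\ref{closedness}; it merely records the statement with a citation to Grauert--Remmert~\cite{G-R} and then uses it as a black box in the proof of Lemma~\ref{l:converge}, so there is no ``paper's own proof'' to compare against directly.

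For context, the classical proof in~\cite{G-R} proceeds through the Weierstrass division theorem with bounds (privileged polydiscs and a standard basis $g_1,\ldots,g_m$ of $N$): one writes each $f_j=\sum_i h_{j,i}\,g_i$ with sup-norm control on the $h_{j,i}$, extracts a convergent subsequence of the coefficient tuples, and obtains a representation of the limit $f$ in $N$. Your route replaces this analytic division machinery by the purely algebraic Krull intersection theorem, using only that uniform convergence forces convergence of every finite jet and that jet spaces are finite-dimensional. Both arguments are standard; yours is shorter, avoids the ``division with bounds'' step you flagged as an unnecessary detour, and dovetails nicely with the Noetherian input the paper already cites from~\cite{hormander}.
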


The following lemma will be used in the proof of Theorem \ref{thm:main}.

\begin{Lemma}
	\label{l:converge}Let $J$ be an $\mathcal{O}_{\mathbb{C}^n,o}$-submodule of $I_o$ such that $I(k\Psi)_o\subset J$ for some  positive integer $k$, and let $f\in J(\Psi)_o$. Let $D_0\subset D$ containing $o$ be a pseudoconvex domain, and let $f_j$ be a sequence of holomorphic functions on $\{\Psi<-s_j\}\cap D_0$ for any $j\in\mathbb{Z}_{>0}$, where $s_j\ge0$. Assume that $s_0:=\lim_{j\rightarrow+\infty}s_j\in[0,+\infty)$,
	$$\limsup_{j\rightarrow+\infty}\int_{\{\Psi<-s_j\}\cap D_0}|f_j|^2\le C<+\infty,$$
	and $(f_j-f)_o\in J$. Then there exists a subsequence of $\{f_j\}_{j\in\mathbb{Z}_{>0}}$  compactly convergent to a holomorphic function $f_0$ on $\{\Psi<-s_0\}\cap D_0$, which satisfies that
	$$\int_{\{\Psi<-s_0\}\cap D_0}|f_0|^2\le C,$$
	and $(f_0-f)_o\in J$.
\end{Lemma}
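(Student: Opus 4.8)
The plan is to combine a normal-families (Montel) argument on the increasing exhaustion of $\{\Psi<-s_0\}\cap D_0$ by the sublevel sets with the closedness of submodules (Lemma \ref{closedness}) to pass the membership condition $(f_j-f)_o\in J$ to the limit. First I would record the elementary but essential point that $\Psi$ is plurisubharmonic on $\{\Psi<0\}$, hence locally bounded above, so that on any compact $K\subset\{\Psi<-s_0\}\cap D_0$ there is $\varepsilon_K>0$ and $j_K$ with $K\subset\{\Psi<-s_j+\varepsilon_K\}\subset\subset\{\Psi<-s_j\}\cap D_0$ for all $j\ge j_K$ (using $s_j\to s_0$); thus $f_j$ is defined and holomorphic near $K$ for all large $j$. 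The uniform $L^2$ bound $\limsup_j\int_{\{\Psi<-s_j\}\cap D_0}|f_j|^2\le C$ together with the sub-mean-value inequality for $|f_j|^2$ gives a uniform sup-norm bound for $f_j$ on $K$. By Montel's theorem and a diagonal argument over an exhaustion $K_1\subset\subset K_2\subset\subset\cdots$ of $\{\Psi<-s_0\}\cap D_0$, I extract a subsequence (still denoted $f_j$) converging uniformly on compacts to a holomorphic function $f_0$ on $\{\Psi<-s_0\}\cap D_0$.

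Next I would get the $L^2$ estimate for $f_0$. On each $\{\Psi<-s_0-\eta\}\cap D_0$ with $\eta>0$, for $j$ large this set is contained in $\{\Psi<-s_j\}\cap D_0$, so Fatou's lemma applied along the subsequence gives
\begin{displaymath}
\int_{\{\Psi<-s_0-\eta\}\cap D_0}|f_0|^2\le\liminf_{j\to+\infty}\int_{\{\Psi<-s_j\}\cap D_0}|f_j|^2\le C.
\end{displaymath}
Letting $\eta\downarrow0$ and using monotone convergence yields $\int_{\{\Psi<-s_0\}\cap D_0}|f_0|^2\le C$.

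It remains to check $(f_0-f)_o\in J$, which is the main obstacle, since $J$ is only an abstract $\mathcal{O}_{\mathbb{C}^n,o}$-submodule of $I_o$ and not a priori closed under compact convergence. The key is the hypothesis $I(k\Psi)_o\subset J$ for a positive integer $k$, which by Lemma \ref{l:m4} corresponds, under the isomorphism $P_{a}$ with $a=0$ (i.e. $P$ of Lemma \ref{l:m3}), to the genuine ideal-theoretic submodule $\mathcal{I}(k\varphi)_o/\mathcal{I}(k\varphi+k\Psi)_o$ inside $I_o/I(k\Psi)_o$, where $\varphi=2\max\{\psi,2\log|F|\}$. Concretely: by Lemma \ref{l:m1} applied to $(f_j-f)_o\in I_o$ (note $(f_j-f)_o\in J\subset I_o$), choose holomorphic $\tilde F_j$ near $o$ with $(\tilde F_j,o)\in\mathcal{I}(k\varphi)_o$ and $\int_{\{\Psi<-t\}\cap D'}|\tilde F_j-(f_j-f)F^{2k}|^2e^{-k\varphi-k\Psi}<+\infty$; the image of $[(f_j-f)_o]_1$ under $P$ is $[(\tilde F_j,o)]$, and since $(f_j-f)_o\in J$ we have $[(\tilde F_j,o)]\in N:=P(J/I(k\Psi)_o)$, a submodule of $\mathcal{I}(k\varphi)_o/\mathcal{I}(k\varphi+k\Psi)_o$. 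Now $(f_j-f)F^{2k}\to(f_0-f)F^{2k}$ uniformly on compacts near $o$; using the $L^2$-type boundedness (from the estimate above together with $|F^{2k}|^2e^{-k\varphi}=1$ on $\{\Psi<0\}$) and the Ohsawa--Takegoshi/Lemma \ref{l:m1} machinery, one can arrange the $\tilde F_j$ to be uniformly bounded near $o$ and hence, after passing to a further subsequence, uniformly convergent to some holomorphic $\tilde F_0$ with $(\tilde F_0,o)\in\mathcal{I}(k\varphi)_o$ representing $P([(f_0-f)_o]_1)$. Applying Lemma \ref{closedness} to the submodule $N$ (lifted to a submodule of a finite free module via the Noetherian structure, as in the proof of Lemma \ref{l:m5}) gives $[(\tilde F_0,o)]\in N$, and applying $P^{-1}=Q$ returns $(f_0-f)_o\in J$. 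The delicate point I expect to spend the most care on is producing the uniformly convergent representatives $\tilde F_j$: one must invoke Lemma \ref{l:m1}'s construction with constants controlled uniformly in $j$ (the bound $\int_{\{\Psi<-s_j\}\cap D_0}|f_j|^2\le C+o(1)$ and $\int_{\{\Psi<-s_0\}\cap D_0}|f|^2$ being fixed feed into $\int\mathbb{I}_{\{-t_0-1<k\Psi<-t_0\}}|f_j-f|^2e^{-k\Psi}$), and then extract the subsequence so that both $f_j$ and $\tilde F_j$ converge simultaneously.
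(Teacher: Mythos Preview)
Your overall strategy matches the paper's: extract a compactly convergent subsequence via Montel and get the $L^2$ bound by Fatou, then pass the membership condition through the isomorphism $P:I_o/I(k\Psi)_o\to\mathcal{I}(k\varphi)_o/\mathcal{I}(k\varphi+k\Psi)_o$ and invoke the closedness of submodules (Lemma~\ref{closedness}). The first two steps are fine.

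There is, however, a genuine gap in your third step. You apply the Lemma~\ref{l:m1}/Lemma~\ref{l:L2} construction to $f_j-f$ and then claim uniform control on the resulting $\tilde F_j$ by feeding in ``$\int_{\{\Psi<-s_0\}\cap D_0}|f|^2$ being fixed''. But the hypothesis only says $f\in J(\Psi)_o$; there is no $L^2$ integrability assumed for $f$ on any fixed domain, so $\int|f_j-f|^2$ need not be uniformly bounded and your $\tilde F_j$ need not have a uniformly convergent subsequence. The paper avoids this by applying Lemma~\ref{l:L2} to $f_j$ itself (for which the uniform bound $\int_{\{\Psi<-s_j\}\cap D_0}|f_j|^2\le C+o(1)$ is exactly the hypothesis), producing $F_j$ on $D_0$ with $P([(f_j)_o]_1)=[(F_j,o)]$ and $\sup_j\int_{D_0}|F_j|^2e^{-k\varphi}<+\infty$. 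After extracting a subsequence $F_{j_l}\to F_0$, Fatou gives $P([(f_0)_o]_1)=[(F_0,o)]$. One then replaces $f$ by the fixed element $f_1$ of the sequence: since $(f_j-f)_o\in J$ for all $j$, also $(f_j-f_1)_o\in J$, hence $(F_j-F_1,o)$ lies in the ideal $\tilde J\subset\mathcal{O}_{\mathbb{C}^n,o}$ satisfying $\tilde J/\mathcal{I}(k\varphi+k\Psi)_o=P(J/I(k\Psi)_o)$. Lemma~\ref{closedness} applied to the ordinary ideal $\tilde J$ (no lifting needed) yields $(F_0-F_1,o)\in\tilde J$, whence $(f_0-f_1)_o\in J$ and finally $(f_0-f)_o\in J$.

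In short: work with $f_j$ rather than $f_j-f$ in the $L^2$ extension step, and compare against a fixed $f_1$ rather than $f$ on the ideal side. With that change your argument goes through and coincides with the paper's.
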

\begin{proof}
	Since $\limsup\limits_{j\rightarrow+\infty}\int_{\{\Psi<-s_j\}\cap D_0}|f_j|^2<+\infty$ and $\lim\limits_{j\rightarrow+\infty}s_j=s_0$, we can extract a subsequence of $\{f_j\}_{j\in\mathbb{Z}_{>0}}$ (also denoted by $\{f_j\}_{j\in\mathbb{Z}_{>0}}$) compactly convergent to a holomorphic function $f_0$ on $\{\Psi<-s_0\}\cap D_0$ and
	$$\int_{\{\Psi<-s_0\}\cap D_0}|f_0|^2\leq C,$$
	which implies $(f_0)_o\in I_o$.
	Thus, it suffices to prove $(f_0-f)_o\in J$.
	
	Denote $\varphi:=2\max\{\psi,2\log|F|\}$, and $t_j:=ks_j$ for $j\in\mathbb{Z}_{\ge0}$. For any $j\in\mathbb{Z}_{>0}$, it follows from Lemma \ref{l:L2} that there exists a holomorphic function $F_j$ on $D_0$ satisfying that
	\begin{equation}
		\label{eq:0213a}
		\begin{split}
		&\int_{D_0}|F_j-(1-b_{t_j,1}(k\Psi))f_jF^{2k}|^2e^{-k\varphi+v_{t_j,1}(k\Psi)-k\Psi}\\
		\le&(2-e^{-t_j-1})\int_{D_0}\mathbb{I}_{\{-t_j-1<k\Psi<-t_j\}}|f_j|^2e^{-k\Psi}\\
		\le&(2e^{t_j+1}-1)\int_{\{k\Psi<-t_j\}\cap D_0}|f_j|^2,	
	\end{split}	\end{equation}
	where $b_{t_j,1}(t)=\int_{-\infty}^t\mathbb{I}_{\{-t_j-1<s<-t_j\}}ds$ and $v_{t_j,1}(t)=\int_0^tb_{t_j,1}(s)ds$. Note that $b_{t_j,1}(t)=0$ for $t<-t_j-1$. Then we have
	\begin{equation}
		\label{eq:0213b}\int_{\{k\Psi<-t_j-1\}\cap D_0}|F_j-f_jF^{2k}|^2e^{-k\varphi-k\Psi}<+\infty.
	\end{equation}
In addition, note that $b_{t_j,1}(t)=1$ for $t\ge t_j$ and $|F|^{2k}e^{-k\varphi}=1$ on $\{k\Psi<-t_j\}\cap D_0$. Then as $v_{t_j,1}(k\Psi)-k\Psi\ge0$, it follows from inequality \eqref{eq:0213a} that
\begin{equation}
	\label{eq:0213c}
	\begin{split}
		&\int_{D_0}|F_j|^2e^{-k\varphi}\\
		\le&2\int_{D_0}|(1-b_{t_j,1}(k\Psi))f_jF^{2k}|^2e^{-k\varphi}+2\int_{D_0}|F_j-(1-b_{t_j,1}(k\Psi))f_jF^{2k}|^2e^{-k\varphi}\\
		\le&(2e^{t_j+1}+1)\int_{\{k\Psi<-t_j\}\cap D_0}|f_j|^2.
			\end{split}
\end{equation}	
Since $\lim_{j\rightarrow+\infty}t_j=t_0$ and $\limsup_{j\rightarrow+\infty}\int_{\{k\Psi<-t_j\}\cap D_0}|f_j|^2<+\infty$, inequality \eqref{eq:0213c} and Fatou's Lemma  imply that there exists a subsequence of $\{F_j\}_{j\in\mathbb{Z}_{>0}}$ denoted by $\{F_{j_l}\}_{l\in\mathbb{Z}_{>0}}$, which compactly converges to a holomorphic function $F_0$ on $D_0$ with
\begin{equation}
	\label{eq:0222a}
	\int_{D_0}|F_0|^2e^{-k\varphi}\le\liminf_{l\rightarrow+\infty}\int_{D_0}|F_{j_l}|^2e^{-k\varphi}<+\infty.
\end{equation}
As $f_j$ converge to $f_0$, by inequality \eqref{eq:0213a} and Fatou's Lemma, we obtain
\begin{equation*}
	\begin{split}
		&\int_{D_0}|F_0-(1-b_{t_0,1}(k\Psi))f_0F^{2k}|^2e^{-k\varphi+v_{t_j,1}(k\Psi)-k\Psi}\\
		=&\int_{D_0}\liminf_{l\rightarrow+\infty}|F_{j_l}-(1-b_{t_{j_l},1}(k\Psi))f_{j_l}F^{2k}|^2e^{-k\varphi+v_{t_{j_l},1}(k\Psi)-k\Psi}\\
		\le&\liminf_{l\rightarrow+\infty}\int_{D_0}|F_{j_l}-(1-b_{t_{j_l},1}(k\Psi))f_{j_l}F^{2k}|^2e^{-k\varphi+v_{t_{j_l},1}(k\Psi)-k\Psi}\\
		<&+\infty,
	\end{split}
\end{equation*}
yielding that
\begin{equation}
	\label{eq:0213d}
	\int_{\{k\Psi<-t_0-1\}\cap D_0}|F_0-f_0F^{2k}|^2e^{-k\varphi-k\Psi}<+\infty.
\end{equation}

Combining inequality \eqref{eq:0213b}, inequality \eqref{eq:0213c}, inequality \eqref{eq:0222a}, inequality \eqref{eq:0213d} and the definition of $P:I_o/I(k\Psi)_o\rightarrow \mathcal{I}(k\varphi)/\mathcal{I}(k\varphi+k\Psi)$, we get
$$P([(f_j)_o]_1)=[(F_j,o)],$$
for any $j\in\mathbb{Z}_{\ge0}$. $(f_j-f)_o\in J$ for any $j\in\mathbb{Z}_{>0}$ implies $(f_j-f_1)_o\in J$ for any $j\in\mathbb{Z}_{>0}$. Lemma \ref{l:m3} shows that there exists an ideal $\tilde J$ of $\mathcal{O}_{\mathbb{C}^n,o}$ such that
\[\mathcal{I}(k\varphi+k\Psi)_o\subset \tilde J\subset \mathcal{I}(k\varphi)_o,\]
and
\[\tilde J/\mathcal{I}(k\varphi+k\Psi)_o=\mathrm{Im}(P|_{J/I(k\Psi)_o}).\]
Then $(F_j-F_1,o)\in \tilde J$, for any $j\in\mathbb{Z}_{>0}$. As $F_j$ compactly converge to $F_0$ on $D_0$,  using Lemma \ref{closedness}, we obtain $(F_0-F_1,o)\in\tilde J$. Recall that $P$ is an $\mathcal{O}_{\mathbb{C}^n,o}$-module isomorphism, and $\tilde J/\mathcal{I}(k\varphi+k\Psi)_o=\mathrm{Im}(P|_{J/I(k\Psi)_o})$. We deduce $(f_0-f_1)_o\in J$, which verifies $(f_0-f)_o\in J$.

Thus, Lemma \ref{l:converge} holds.
	\end{proof}

Let $f$ be a holomorphic function on $D$. Denote that
$$\Psi_1:=\min\{2c_o^{fF}(\psi)\psi-2\log|F|,0\},$$
and $$I_+(\Psi_1)_o:=\bigcup_{a>1}I(a\Psi_1)_o.$$
\begin{Lemma}
	\label{l:m6}
	$f_o\not\in I_+(\Psi_1)_o$.
\end{Lemma}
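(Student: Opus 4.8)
The plan is to argue by contradiction, exploiting the defining property of the jumping number $c_o^{fF}(\psi)$ together with the isomorphisms established in Lemmas \ref{l:m3}--\ref{l:m5}. Suppose $f_o\in I_+(\Psi_1)_o$. By Lemma \ref{l:m5} (applied with $\Psi$ there replaced by $\Psi_1$ and $a=1$), there exists $a'>1$ such that $I_+(\Psi_1)_o=I(a'\Psi_1)_o$, so $f_o\in I(a'\Psi_1)_o$. By definition of $I(a'\Psi_1)_o$, this means that there exist $t\gg1$ and a neighborhood $V$ of $o$ with
\begin{displaymath}
\int_{\{\Psi_1<-t\}\cap V}|f|^2e^{-a'\Psi_1}<+\infty.
\end{displaymath}

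Next I would unwind what this integrability says in terms of $\psi$ and $F$ near $o$. On the set $\{\Psi_1<-t\}$ we have $\Psi_1=2c_o^{fF}(\psi)\psi-2\log|F|$, so $e^{-a'\Psi_1}=|F|^{2a'}e^{-2a'c_o^{fF}(\psi)\psi}$, and the displayed finiteness becomes $\int_{\{\Psi_1<-t\}\cap V}|f|^2|F|^{2a'}e^{-2a'c_o^{fF}(\psi)\psi}<+\infty$. The complementary region $\{\Psi_1\ge -t\}\cap V$ is, up to shrinking $V$, contained in $\{2c_o^{fF}(\psi)\psi-2\log|F|\ge -t\}$, on which $|F|^{2a'}e^{-2a'c_o^{fF}(\psi)\psi}=\big(|F|^2 e^{-2c_o^{fF}(\psi)\psi}\big)^{a'}\le e^{a't}$ is bounded; since $f$ and $F$ are holomorphic (hence locally bounded) there, $\int_{\{\Psi_1\ge -t\}\cap V}|fF|^2 e^{-2c_o^{fF}(\psi)\psi}\,e^{-(2a'-2)c_o^{fF}(\psi)\psi}\cdots$ — more cleanly, I would instead combine the two pieces to show that $|fF|^2 e^{-2a'c_o^{fF}(\psi)\psi}$ is locally integrable near $o$, perhaps after first noting $|f|^2|F|^{2a'}\ge c|fF|^2$ on a neighborhood where $|F|$ is bounded, i.e. $|F|^{2a'-2}$ is bounded. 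Thus $|fF|^2 e^{-2a'c_o^{fF}(\psi)\psi}\in L^1$ near $o$, which by definition of the jumping number forces $a'c_o^{fF}(\psi)\le c_o^{fF}(\psi)$, i.e. $a'\le 1$, contradicting $a'>1$.

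The main obstacle is the careful bookkeeping in the middle step: the integrability hypothesis is stated on the sublevel set $\{\Psi_1<-t\}$ only, whereas the jumping number $c_o^{fF}(\psi)$ is defined by $L^1$-integrability of $|fF|^2 e^{-2c\psi}$ on a \emph{full} neighborhood of $o$. One must show that the contribution from $\{\Psi_1\ge -t\}\cap V$ is harmless. On that set $|F|^2 e^{-2c_o^{fF}(\psi)\psi}\ge e^{-t}$, equivalently $e^{-2c_o^{fF}(\psi)\psi}\le e^{t}|F|^{-2}$, so $|fF|^2 e^{-2a'c_o^{fF}(\psi)\psi}\le e^{(a'-1)t}|fF|^2 |F|^{-2(a'-1)} e^{-2c_o^{fF}(\psi)\psi}$; choosing $V$ so that $e^{-2c_o^{fF}(\psi)\psi}$ is — no, $\psi$ may be $-\infty$ at $o$, so I instead keep the factor $e^{-2c_o^{fF}(\psi)\psi}$ and bound $|F|^{-2(a'-1)}$, which is \emph{not} bounded near zeros of $F$. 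The correct route is the opposite direction: on $\{\Psi_1<-t\}$ we already have the good integral, and on $\{\Psi_1\ge -t\}$ we have the \emph{trivial} bound $|fF|^2 e^{-2c_o^{fF}(\psi)\psi}\le |fF|^2 e^{t}$ directly from $2c_o^{fF}(\psi)\psi-2\log|F|\ge -t \Rightarrow e^{-2c_o^{fF}(\psi)\psi}\le e^{t}|F|^{-2}$ and hence $|fF|^2 e^{-2c_o^{fF}(\psi)\psi}\le e^{t}|f|^2$, which is locally integrable. Therefore $|fF|^2 e^{-2c_o^{fF}(\psi)\psi}\in L^1(V')$ for a small neighborhood $V'$ of $o$, and separately, from the hypothesis and $a'>1$, a standard Hölder/convexity argument upgrades this to $|fF|^2 e^{-2a''\cdot c_o^{fF}(\psi)\psi}\in L^1$ for some $a''>1$ (interpolating between $e^{-2c_o^{fF}(\psi)\psi}$ on the annular region and $e^{-a'\Psi_1}$ on the deep sublevel set). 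Either way, this yields $c_o^{fF}(\psi)\ge a'' c_o^{fF}(\psi)$ with $a''>1$, contradicting $c_o^{fF}(\psi)<+\infty$ (which holds by the blanket assumption wherever this lemma is invoked, e.g. in Theorem \ref{thm:J_M}). Hence $f_o\not\in I_+(\Psi_1)_o$, completing the proof.
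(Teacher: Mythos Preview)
Your overall strategy---contradiction, extract $a'>1$ with $f_o\in I(a'\Psi_1)_o$, and contradict the maximality of $c_o^{fF}(\psi)$---matches the paper's. You also correctly see that the integrability on $\{\Psi_1<-t\}\cap V$ extends to all of $V$: on $\{\Psi_1\ge -t\}$ one has $|F|^{2a'}e^{-2a'c_o^{fF}(\psi)\psi}=\big(|F|^2e^{-2c_o^{fF}(\psi)\psi}\big)^{a'}\le e^{a't}$, and since $f$ is locally bounded this gives $\int_V|f|^2|F|^{2a'}e^{-2a'c_o^{fF}(\psi)\psi}<+\infty$, exactly the paper's inequality \eqref{eq:0221e}.

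The gap is the next step. Your attempted passage to $|fF|^2e^{-2a'c_o^{fF}(\psi)\psi}\in L^1$ via ``$|f|^2|F|^{2a'}\ge c\,|fF|^2$'' requires $|F|^{2(a'-1)}\ge c>0$ near $o$, which fails precisely when $F(o)=0$---and that is the substantive case. Your fallback, showing only $|fF|^2e^{-2c_o^{fF}(\psi)\psi}\in L^1$ and then invoking an unspecified ``standard H\"older/convexity argument'' to push the exponent above $1$, is not carried out; the first part is tautological and the second is exactly the missing content. The paper's resolution is concrete: since $\log|F|$ is plurisubharmonic, $\int_U|F|^{-\epsilon}<+\infty$ for small $\epsilon>0$ on some $U\ni o$; choosing $p'_0\in(1,a')$ close to $1$ and applying H\"older with exponents $a'/p'_0$ and $a'/(a'-p'_0)$ to the factorization $|fF|^2e^{-2p'_0c_o^{fF}(\psi)\psi}=\big(|f|^2|F|^{2p'_0}e^{-2p'_0c_o^{fF}(\psi)\psi}\big)\cdot|F|^{2-2p'_0}$ yields $\int_U|fF|^2e^{-2p'_0c_o^{fF}(\psi)\psi}<+\infty$, contradicting the definition of $c_o^{fF}(\psi)$. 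You should make this H\"older step explicit; the local integrability of negative powers of $|F|$ is the key ingredient your proposal never names.
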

\begin{proof}
	We prove Lemma \ref{l:m6} by contradiction: if not, there exists $p_0>1$ such that $f_o\in I(p_0\Psi_1)_o$, which implies that
	\begin{equation}
		\label{eq:0221d}
		\int_{\{\Psi_1<-t\}\cap V}|f|^2|F|^{2p_0}e^{-2p_0c_o^{fF}(\psi)\psi}=\int_{\{\Psi_1<-t\}\cap V}|f|^2e^{-p_0\Psi}<+\infty
	\end{equation}
	holds for some $t>0$ and some neighborhood $V\Subset D$ of $o$. Note that
	$$\sup_{V\cap\{\Psi_1\ge t\}}|f|^2|F|^{2p_0}e^{-2p_0c_o^{fF}(\psi)\psi}<+\infty.$$
	Then inequality \eqref{eq:0221d} indicates
	\begin{equation}
		\label{eq:0221e}\int_{V}|f|^2|F|^{2p_0}e^{-2p_0c_o^{fF}(\psi)\psi}<+\infty.
	\end{equation}
	As $\log|F|$ is a plurisubharmonic function on $D$, there exist a neighborhood $U\subset V$ of $o$ and $p'_0\in(1,p_0)$ such that
	\begin{equation}
		\label{eq:0221f}
		\int_{U}|F|^{2(1-p'_0)\frac{p_0}{p_0-p'_0}}=\int_{U}e^{-2(p'_0-1)\frac{p_0}{p_0-p'_0}\log|F|}<+\infty.
	\end{equation}
	It follows from inequality \eqref{eq:0221e}, inequality \eqref{eq:0221f} and H\"older's inequality that
	\begin{displaymath}
		\begin{split}
			&\int_{U}|fF|^2e^{-2p'_0c_o^{fF}(\psi)\psi}\\
			=&\int_U|f|^2|F|^{2p'_0}e^{-2p'_0c_o^{fF}(\psi)\psi}|F|^{2-2p'_0}\\
			\le&\left(\int_{U}|f|^{2\frac{p_0}{p'_0}}|F|^{2p_0}e^{-2p_0c_o^{fF}(\psi)\psi} \right)^{\frac{p'_0}{p_0}}\times\left(\int_U|F|^{2(1-p'_0)\frac{p_0}{p_0-p'_0}} \right)^{\frac{p_0-p'_0}{p_0}}\\
			<&+\infty,
		\end{split}
	\end{displaymath}
	which contradicts the definition of $c_o^{fF}(\psi)$. Thus, we have $f_o\not\in I_+(\Psi_1)_o$.
\end{proof}

\subsection{Some properties of $G(t)$}
\

Following the notations and assumptions in Section \ref{main},
we present some properties related to $G(t;\Psi,J,f)$ ($G(t)$ for short) in this section.

\begin{Lemma}
\label{l:2}
$f_o\in J$ if and only if  $G(t;\Psi,J,f)=0$.
\end{Lemma}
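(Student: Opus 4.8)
The plan is to prove the two implications of the equivalence separately; the forward direction is immediate, while the reverse direction rests on the compactness statement of Lemma~\ref{l:converge}.

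\textbf{($\Rightarrow$)} Suppose $f_o\in J$. Since $J$ is an $\mathcal{O}_{\mathbb{C}^n,o}$-module, $-f_o\in J$, so the holomorphic function $\tilde f\equiv 0$ on $\{\Psi<-t\}$ satisfies $(\tilde f-f)_o=-f_o\in J$ and is thus admissible in the definition of $G(t;\Psi,J,f)$. Hence $0\le G(t)\le\int_{\{\Psi<-t\}}|0|^2=0$, i.e.\ $G(t)=0$.

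\textbf{($\Leftarrow$)} Suppose $G(t)=0$. Choose $\tilde f_j\in\mathcal{O}(\{\Psi<-t\})$ with $(\tilde f_j-f)_o\in J$ for all $j$ and $\int_{\{\Psi<-t\}}|\tilde f_j|^2\to 0$, and fix a pseudoconvex domain $D_0\subset D$ containing $o$ (e.g.\ a small ball). Note $f_o\in J(\Psi)_o$ since $f\in\mathcal{O}(V_0\cap\{\Psi<-t_0\})$, and $I(1\cdot\Psi)_o=I(\Psi)_o\subset J$, so Lemma~\ref{l:converge} applies with $k=1$, $s_j\equiv t$ (hence $s_0=t$), and $C=1$. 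Passing to a subsequence, $\tilde f_j$ converges compactly on $\{\Psi<-t\}\cap D_0$ to a holomorphic function $f_0$ with $(f_0-f)_o\in J$.

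Finally, compact convergence of $\tilde f_j$ to $f_0$ together with $\int_{\{\Psi<-t\}}|\tilde f_j|^2\to 0$ and Fatou's lemma give $\int_{\{\Psi<-t\}\cap D_0}|f_0|^2\le\liminf_j\int_{\{\Psi<-t\}\cap D_0}|\tilde f_j|^2=0$, hence $f_0\equiv 0$ on $\{\Psi<-t\}\cap D_0$. Therefore $(f_0-f)_o=-f_o\in J$, and since $J$ is a module this yields $f_o\in J$. The only points needing attention are the bookkeeping — checking that $f_o$ and $(\tilde f_j-f)_o$ are genuine elements of $J(\Psi)_o$ and that the parameters fed to Lemma~\ref{l:converge} are admissible — but no real obstacle arises, since Lemma~\ref{l:converge} already encapsulates the closedness argument (via the Noetherian property of $\mathcal{O}_{\mathbb{C}^n,o}$ and Lemma~\ref{closedness}).
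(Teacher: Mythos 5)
Your proof is correct and follows the same approach as the paper: the forward implication by testing $\tilde f\equiv 0$, and the reverse implication by invoking Lemma~\ref{l:converge} to extract a holomorphic limit $f_0$ with $(f_0-f)_o\in J$ and $\int|f_0|^2=0$. The only cosmetic difference is that the paper takes $C=0$ directly in Lemma~\ref{l:converge} (so $f_0\equiv 0$ comes out immediately) and applies the lemma with $D_0=D$, whereas you take $C=1$ and re-derive the vanishing of $f_0$ via a separate Fatou argument; both are fine.
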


\begin{proof}
If $f_o\in J$, it follows from the definition of $G(t;\Psi,J,f)$  that $G(t;\Psi,J,f)=0$.

If $G(t;\Psi,J,f)=0$, there exists a sequence of holomorphic functions $\{f_j\}_{j\in\mathbb{Z}_{>0}}$ on $\{\Psi<-t\}$ such that $\lim_{j\rightarrow+\infty}\int_{\{\Psi<-t\}}|f_j|^2=0$ and $(f_j-f)_o\in J$. Then Lemma \ref{l:converge} implies that there exists a holomorphic function $f_0$ on $\{\Psi<-t\}$ such that $\int_{\{\Psi<-t\}}|f_0|^2=0$ and $(f_0-f)_o\in J$, which shows $f\in J$.
\end{proof}

The following lemma shows the existence and uniqueness of the minimal holomorphic function.

 \begin{Lemma}
 	\label{l:3}
Assume that $G(t)<+\infty$. Then there exists a unique holomorphic function $f_t$ on $\{\Psi<-t\}$ satisfying $(f_t-f)_o\in J$ and $\int_{\{\Psi<-t\}}|f_t|^2=G(t)$.
Furthermore, for any holomorphic function $\hat
 	{f}$ on $\{\Psi<-t\}$ satisfying $(\hat f-f)_o\in J$ and $\int_{\{\Psi<-t\}}|\hat{f}|^2<+\infty$, we have the following equality
    \begin{equation}
    	\label{eq:l31}
    	\int_{\{\Psi<-t\}}|f_t|^2+\int_{\{\Psi<-t\}}|\hat{f}-f_t|^2=\int_{\{\Psi<-t\}}|\hat{f}|^2.
    \end{equation}	
 \end{Lemma}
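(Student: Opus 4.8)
The plan is to regard $G(t)$ as the square of the distance, in the Bergman-type Hilbert space $A^2(\{\Psi<-t\})$ of square-integrable holomorphic functions, from $0$ to the affine set of admissible competitors, and then run the standard projection argument, with Lemma \ref{l:converge} playing the role of weak compactness.

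First I would fix a minimizing sequence $\{f_j\}_{j\in\mathbb{Z}_{>0}}$ of holomorphic functions on $\{\Psi<-t\}$ with $(f_j-f)_o\in J$ and $\int_{\{\Psi<-t\}}|f_j|^2\to G(t)$ (such a sequence exists because $G(t)<+\infty$). Since $J$ is an $\mathcal{O}_{\mathbb{C}^n,o}$-submodule of $I_o$ with $I(\Psi)_o\subset J$, and $I(k\Psi)_o\subset I(\Psi)_o$ for every integer $k\ge1$, the pair $(J,f)$ meets the hypotheses of Lemma \ref{l:converge} with $k=1$; applying that lemma with $s_j\equiv t$ and $C=G(t)$ produces a subsequence converging compactly to a holomorphic function $f_t$ on $\{\Psi<-t\}$ with $\int_{\{\Psi<-t\}}|f_t|^2\le G(t)$ and $(f_t-f)_o\in J$. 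The reverse inequality is immediate from the definition of $G(t)$, so $\int_{\{\Psi<-t\}}|f_t|^2=G(t)$, giving existence.

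Uniqueness and \eqref{eq:l31} then come from the parallelogram law. For uniqueness, if $\tilde f_t$ is a second minimizer, then $\tfrac12(f_t+\tilde f_t)$ is again admissible, since $J$ is closed under addition and under multiplication by the constant $\tfrac12\in\mathcal{O}_{\mathbb{C}^n,o}$; hence $\int_{\{\Psi<-t\}}|\tfrac12(f_t+\tilde f_t)|^2\ge G(t)$, while the parallelogram identity $\int_{\{\Psi<-t\}}|\tfrac12(f_t+\tilde f_t)|^2+\int_{\{\Psi<-t\}}|\tfrac12(f_t-\tilde f_t)|^2=\tfrac12\int_{\{\Psi<-t\}}|f_t|^2+\tfrac12\int_{\{\Psi<-t\}}|\tilde f_t|^2=G(t)$ forces $f_t=\tilde f_t$. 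For \eqref{eq:l31}, given admissible $\hat f$ with $\int_{\{\Psi<-t\}}|\hat f|^2<+\infty$, observe $(\hat f-f_t)_o=(\hat f-f)_o-(f_t-f)_o\in J$, so $f_t+\lambda(\hat f-f_t)$ is admissible for every $\lambda\in\mathbb{C}$ and $\int_{\{\Psi<-t\}}|\hat f-f_t|^2<+\infty$; minimality yields $2\,\mathrm{Re}\bigl(\lambda\int_{\{\Psi<-t\}}\overline{f_t}\,(\hat f-f_t)\bigr)+|\lambda|^2\int_{\{\Psi<-t\}}|\hat f-f_t|^2\ge0$ for all $\lambda$, which forces $\int_{\{\Psi<-t\}}\overline{f_t}\,(\hat f-f_t)=0$; expanding $\int_{\{\Psi<-t\}}|\hat f|^2=\int_{\{\Psi<-t\}}|f_t+(\hat f-f_t)|^2$ then yields \eqref{eq:l31}.

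The main point to watch — the only place where this differs from the textbook projection lemma — is that $J$ is merely an $\mathcal{O}_{\mathbb{C}^n,o}$-module and not visibly a closed subspace of $A^2(\{\Psi<-t\})$; closure under the handful of algebraic operations used above is automatic, and the compactness/closedness needed to pass to the limit along the minimizing sequence is exactly what Lemma \ref{l:converge} supplies, its conclusion $(f_0-f)_o\in J$ serving as the substitute for ``$J$ closed''. Modulo that, the argument is the classical Hilbert-space computation.
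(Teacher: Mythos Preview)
Your proof is correct and follows essentially the same route as the paper's own argument: existence via a minimizing sequence and Lemma~\ref{l:converge} (with $D_0=D$, $s_j\equiv t$, $k=1$), uniqueness via the parallelogram identity applied to the midpoint of two minimizers, and the Pythagorean relation \eqref{eq:l31} via the first-variation/orthogonality argument for $f_t+\lambda(\hat f-f_t)$. The only cosmetic difference is that the paper phrases uniqueness as a contradiction and records only the vanishing of the real part of the inner product (which already suffices for \eqref{eq:l31}), whereas you deduce the full complex orthogonality; the substance is identical.
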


 \begin{proof}
 Firstly, we prove the existence of  $f_t$. As $G(t)<+\infty$, there exist holomorphic functions $\{F_j\}_{j\in \mathbb{Z}_{>0}}$ on $\{\Psi<-t\}$ such that
 $\lim_{j\rightarrow+\infty}\int_{\{\Psi<-t\}}|F_j|^2=G(t)$ and $(F_j-f)_o\in J$. Lemma \ref{l:converge} indicates that there exists a holomorphic function $f_t$ on $\{\Psi<-t\}$ such that $(f_t-f)_o\in J$ and $\int_{\{\Psi<-t\}}|f_t|^2\le G(t)$. By the definition of $G(t)$, we have $\int_{\{\Psi<-t\}}|f_t|^2= G(t)$.

 Secondly, we prove the uniqueness of $f_t$ by contradiction: if not, there exist two different holomorphic functions $\tilde f_1$ and $\tilde f_2$ on $\{\Psi<-t\}$ satisfying $\int_{\{\Psi<-t\}}|\tilde f_1|^2=\int_{\{\Psi<-t\}}|\tilde f_2|^2=G(t)$, $(\tilde f_1-f)_o\in J$, and $(\tilde f_2-f)_o\in J$. Note that
\begin{equation}
\label{eq:l32}
\begin{split}	\int_{\{\Psi<-t\}}\left\vert\frac{\tilde f_1+\tilde f_2}{2}\right\vert^2+\int_{\{\Psi<-t\}}\left\vert\frac{\tilde f_1-\tilde f_2}{2}\right\vert^2
	\\=\frac{\int_{\{\Psi<-t\}}|\tilde f_1|^2+\int_{\{\Psi<-t\}}|\tilde f_2|^2}{2}=G(t).
\end{split}
\end{equation}
Then we obtain that
\begin{displaymath}
	\int_{\{\psi<-t\}}\left\vert\frac{f_1+f_2}{2}\right\vert^2< G(t) ,
\end{displaymath}
and $(\frac{\tilde f_1+\tilde f_2}{2}-f)_o\in J$, which contradicts the definition of $G(t)$.

Finally, we prove  equality \eqref{eq:l31}. For any holomorphic function $h$ on $\{\Psi<-t\}$ satisfying $h_o\in J$ and $\int_{\{\Psi<-t\}}|h|^2<+\infty$,
 it is clear that for any complex number $\alpha$, $f_t+\alpha h$ satisfies
$(f_t+\alpha h-f)_o\in J$. The definition of $G(t)$ shows that
\begin{displaymath} \int_{\{\Psi<-t\}}|f_t|^2\leq\int_{\{\Psi<-t\}}|f_t+\alpha h|^2<+\infty,
\end{displaymath}
yielding that
\begin{displaymath}
	\mathrm{Re}\int_{\{\psi<-t\}}f_t\overline{h}=0.
\end{displaymath}
Then we get
\begin{displaymath}
\int_{\{\Psi<-t\}}|f_t+h|^2=\int_{\{\Psi<-t\}}|f_t|^2+\int_{\{\Psi<-t\}}|h|^2.
\end{displaymath}
Choosing $h=\hat{f}-f_t$, we obtain equality \eqref{eq:l31}.\end{proof}

We present the monotonicity and the lower semicontinuity of $G(t)$.

 \begin{Lemma}
 	\label{l:4}
 	$G(t)$ is decreasing on $[0,+\infty)$, such that $\lim_{t\rightarrow t_0+0}G(t)=G(t_0)$ for $t_0\in[0,+\infty)$. If there exists $t_1\ge0$ such that $G(t_1)<+\infty$, then $\lim_{t\rightarrow+\infty}G(t)=0$. Especially, $G(t)$ is lower semicontinuous on $[0,+\infty)$ if $G(0)<+\infty$.
 	 	\end{Lemma}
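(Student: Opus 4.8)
\textbf{Proof proposal for Lemma \ref{l:4}.}

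The plan is to establish the four assertions in order, each reducing to a property of the minimal $L^2$ integral that we already have in hand. First, the monotonicity: if $0\le t'\le t''$, then $\{\Psi<-t''\}\subset\{\Psi<-t'\}$, so any competitor $\tilde f$ for $G(t')$ restricts to a competitor for $G(t'')$ with $\int_{\{\Psi<-t''\}}|\tilde f|^2\le\int_{\{\Psi<-t'\}}|\tilde f|^2$; taking the infimum over such $\tilde f$ gives $G(t'')\le G(t')$, so $G$ is decreasing on $[0,+\infty)$.

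Next, the right continuity $\lim_{t\to t_0+0}G(t)=G(t_0)$. Since $G$ is decreasing, $L:=\lim_{t\to t_0+0}G(t)$ exists and satisfies $L\le G(t_0)$; we must show $L\ge G(t_0)$. If $L=+\infty$ there is nothing to prove, so assume $L<+\infty$. Pick a decreasing sequence $t_j\downarrow t_0$ with $t_j>t_0$; for each $j$ take the minimal holomorphic function $f_{t_j}$ on $\{\Psi<-t_j\}$ from Lemma \ref{l:3}, so $(f_{t_j}-f)_o\in J$ and $\int_{\{\Psi<-t_j\}}|f_{t_j}|^2=G(t_j)\le G(t_1)$ for $j$ large (here one should first reduce to a neighborhood $D_0$ of $o$ and a pseudoconvex domain as in Lemma \ref{l:converge}, or simply apply Lemma \ref{l:converge} with $s_j=t_j$, $s_0=t_0$). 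Since $\limsup_j\int_{\{\Psi<-t_j\}}|f_{t_j}|^2\le L<+\infty$, Lemma \ref{l:converge} produces a subsequence converging locally uniformly to a holomorphic function $f_0$ on $\{\Psi<-t_0\}$ with $\int_{\{\Psi<-t_0\}}|f_0|^2\le L$ and $(f_0-f)_o\in J$. Hence $f_0$ is a competitor for $G(t_0)$ and $G(t_0)\le L$, which gives equality.

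For the third assertion, suppose $G(t_1)<+\infty$ for some $t_1\ge0$; we show $\lim_{t\to+\infty}G(t)=0$. Since $G$ is decreasing and nonnegative, the limit $\ell:=\lim_{t\to+\infty}G(t)$ exists and $0\le\ell\le G(t_1)<+\infty$. Apply Lemma \ref{l:L2'} with the minimal function $f_{t_1}$ (which satisfies $\int_{\{\Psi<-t_1\}}|f_{t_1}|^2=G(t_1)<+\infty$): for any $t_0>t_1$ and any $B>0$, there is a holomorphic function $\tilde F$ on $\{\Psi<-t_1\}$ (hence on the smaller set $\{\Psi<-t\}$ for $t\ge t_1$) with $\tilde F-(1-b_{t_0,B}(\Psi))f_{t_1}$ controlled in the weighted $L^2$ norm by $(e^{-t_1}-e^{-t_0-B})\int_D\frac1B\mathbb I_{\{-t_0-B<\Psi<-t_0\}}|f_{t_1}|^2e^{-\Psi}$; since $f_{t_1}$ differs from $f$ by an element whose germ lies in $J$ and $(1-b_{t_0,B})$ vanishes near $-\infty$ in $\Psi$ while the correction preserves the germ condition, the function $\tilde F$ is a competitor for $G(t)$ on suitable sublevel sets. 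Estimating $\int_{\{\Psi<-t\}}|\tilde F|^2$ and letting first $t\to+\infty$ (so that the competitor is supported where $b_{t_0,B}(\Psi)=1$ forces the relevant mass to shrink, using $\int_{\{\Psi<-t_1\}}|f_{t_1}|^2<+\infty$ so the tail integral over $\{\Psi<-t\}$ tends to $0$) and then optimizing over $t_0,B$, one concludes $\ell=0$. The cleanest route is: bound $G(t)$ by $2\int_{\{\Psi<-t\}}|(1-b_{t_0,B}(\Psi))f_{t_1}|^2$ plus twice the error term, note the first term is $\le 2\int_{\{\Psi<-t\}}|f_{t_1}|^2\to0$ as $t\to+\infty$, and the error term can be made arbitrarily small by choosing $t_0$ large; hence $\ell=0$.

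Finally, the lower semicontinuity on $[0,+\infty)$ under the hypothesis $G(0)<+\infty$ follows by combining the three previous facts: at any $t_0>0$, right continuity gives $\lim_{t\to t_0+}G(t)=G(t_0)$, while monotonicity gives $\liminf_{t\to t_0-}G(t)\ge G(t_0)$ (indeed $G(t)\ge G(t_0)$ for $t<t_0$); together with the finiteness $G(t)\le G(0)<+\infty$ needed to make the limits meaningful, this yields $\liminf_{t\to t_0}G(t)\ge G(t_0)$, i.e.\ lower semicontinuity at $t_0$; at $t_0=0$ only the one-sided statement is needed and it is exactly the right continuity already proved. I expect the main obstacle to be the third assertion $\lim_{t\to+\infty}G(t)=0$: one must carefully check that the function produced by Lemma \ref{l:L2'} is genuinely admissible in the definition of $G(t)$ (the germ condition $(\tilde F-f)_o\in J$, using $I(\Psi)_o\subset J$ to absorb the $b_{t_0,B}$-correction) and that the two error contributions can be driven to zero in the right order; the other three parts are soft consequences of monotonicity, Lemma \ref{l:3}, and Lemma \ref{l:converge}.
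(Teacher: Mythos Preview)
Your proofs of monotonicity, right continuity, and lower semicontinuity match the paper's: restriction of competitors for monotonicity, Lemma~\ref{l:3} plus Lemma~\ref{l:converge} for right continuity, and the combination of these for lower semicontinuity.

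The only substantive divergence is in the third assertion, $\lim_{t\to+\infty}G(t)=0$. You invoke Lemma~\ref{l:L2'} to manufacture a competitor $\tilde F$ and then estimate it by splitting into $2\int_{\{\Psi<-t\}}|(1-b_{t_0,B}(\Psi))f_{t_1}|^2$ plus an error term. This works, but it is unnecessary: the paper's argument is a single line. Once you have the minimal function $f_{t_1}$ from Lemma~\ref{l:3} with $\int_{\{\Psi<-t_1\}}|f_{t_1}|^2=G(t_1)<+\infty$, its restriction to $\{\Psi<-t\}$ is already a competitor for $G(t)$ for every $t>t_1$ (the germ condition $(f_{t_1}-f)_o\in J$ is preserved under restriction, by the definition of $J(\Psi)_o$). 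Hence
\[
G(t)\le\int_{\{\Psi<-t\}}|f_{t_1}|^2\longrightarrow 0\quad(t\to+\infty)
\]
by the dominated convergence theorem, since $|f_{t_1}|^2$ is integrable on $\{\Psi<-t_1\}$ and the sets $\{\Psi<-t\}$ decrease to the Lebesgue-null set $\{\Psi=-\infty\}$. You in fact state exactly this tail-integral fact inside your argument, but do not notice that it already finishes the proof by itself; the detour through Lemma~\ref{l:L2'} (and the attendant verification that $\tilde F$ satisfies the germ condition via $I(\Psi)_o\subset J$) is superfluous here.
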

 	
\begin{proof}
By the definition of $G(t)$, it is clear that $G(t)$ is decreasing on $[0,+\infty)$. The dominated convergence theorem verifies that $\lim_{t\rightarrow+\infty}G(t)=0$ if there exists $t_1\ge0$ such that $G(t_1)<+\infty$. 

It suffices to prove $\lim_{t\rightarrow t_0+0}G(t)= G(t_0)$. We prove it by contradiction: if not, then $\lim_{t\rightarrow t_0+0}G(t)< G(t_0)$ for some $t_0\in[0,+\infty)$. By Lemma \ref{l:3}, there exists a unique holomorphic function $f_t$ on $\{\Psi<-t\}$ satisfying that $(f_t-f)_o\in J$ and $\int_{\{\Psi<-t\}}|f_t|^2=G(t)$. Note that $G_{F,\psi,c}(t)$ is decreasing, indicating that $\lim_{t\rightarrow t_0+0}\int_{\{\Psi<-t\}}|f_t|^2<G(t_0)$. It follows from Lemma \ref{l:converge} that there exists a holomorphic function $\tilde f_{t_0}$ on $\{\Psi<-t_0\}$ such that
$(\tilde f_{t_0}-f)_o\in J$ and $\int_{\{\Psi<-t\}}|\tilde f_{t_0}|^2\le\lim_{t\rightarrow t_0+0}\int_{\{\Psi<-t\}}|f_t|^2<G(t_0)$,
 which contradicts the definition of $G(t_0)$. Thus, we have $\lim_{t\rightarrow t_0+0}G(t)=G(t_0).$
  \end{proof}

We consider the derivatives of $G(t)$ in the following lemma.

\begin{Lemma}
\label{l:5}
Assume that $G(t_1)<+\infty$, where $t_1\in[0,+\infty)$. Then for any $t_0>t_1$, we have
$$\frac{G(t_1)-G(t_0)}{e^{-t_1}-e^{-t_0}}\leq
\liminf_{B\to0+0}\frac{G(t_0)-G(t_0+B)}{e^{-t_0}-e^{-t_0-B}}.$$
\end{Lemma}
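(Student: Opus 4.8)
The plan is to exploit the $L^2$-extension-type estimate of Lemma 2.3 (the refined Ohsawa–Takegoshi, stated as Lemma~\ref{l:L2'}) together with the orthogonality decomposition in Lemma~\ref{l:3}. Fix $t_0>t_1$ and fix a small $B>0$. Let $f_{t_0}$ be the minimal holomorphic function on $\{\Psi<-t_0\}$ from Lemma~\ref{l:3}, so $(f_{t_0}-f)_o\in J$ and $\int_{\{\Psi<-t_0\}}|f_{t_0}|^2=G(t_0)$. Applying Lemma~\ref{l:L2'} with this $f_{t_0}$ in the role of ``$f$'' (and with the cutoff parameters $t_0,B$) produces a holomorphic function $\tilde F$ on $\{\Psi<-t_1\}$ with
\begin{displaymath}
\int_{\{\Psi<-t_1\}}\bigl|\tilde F-(1-b_{t_0,B}(\Psi))f_{t_0}\bigr|^2 e^{v_{t_0,B}(\Psi)-\Psi}\le \bigl(e^{-t_1}-e^{-t_0-B}\bigr)\,\frac1B\int_{\{-t_0-B<\Psi<-t_0\}}|f_{t_0}|^2 e^{-\Psi}.
\end{displaymath}
The first point to check is that $\tilde F$ is a competitor in the definition of $G(t_1)$: since $(1-b_{t_0,B}(\Psi))=0$ near $o$ (where $\Psi\le -t_0-B$ for $t_0$ large, as we are comparing germs at the boundary point), one has $(\tilde F - f_{t_0})_o\in I(\Psi)_o\subset J$ — here I would use that the weighted integral being finite forces $(\tilde F)_o\in I(\Psi)_o$ on a neighborhood — and hence $(\tilde F-f)_o\in J$.

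Next I would extract the differential inequality. Because $v_{t_0,B}(\Psi)-\Psi\ge 0$ and $v_{t_0,B}(\Psi)-\Psi=0$ on $\{\Psi\ge -t_0\}$ while it is bounded on $\{\Psi<-t_1\}$, and similarly the integrand on the right is supported on $\{-t_0-B<\Psi<-t_0\}$, one passes from the weighted inequality to an unweighted one, getting (after also using $1-b_{t_0,B}=1$ on $\{\Psi\ge -t_0\}$ and $G(t_0)=\int_{\{\Psi<-t_0\}}|f_{t_0}|^2$) an estimate of the form
\begin{displaymath}
G(t_1)\le \int_{\{\Psi<-t_1\}}|\tilde F|^2 \le G(t_0) + \bigl(e^{-t_1}-e^{-t_0-B}\bigr)\cdot\frac1B\int_{\{-t_0-B<\Psi<-t_0\}}|f_{t_0}|^2 e^{-\Psi} + o(1),
\end{displaymath}
where the cross terms are handled by the orthogonality relation \eqref{eq:l31} of Lemma~\ref{l:3} applied on $\{\Psi<-t_1\}$ (projecting $\tilde F$ onto the minimal solution there). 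Rearranging gives
\begin{displaymath}
\frac{G(t_1)-G(t_0)}{e^{-t_1}-e^{-t_0-B}}\le \frac1B\int_{\{-t_0-B<\Psi<-t_0\}}|f_{t_0}|^2 e^{-\Psi}\le \frac{e^{t_0+B}}{B}\int_{\{-t_0-B<\Psi<-t_0\}}|f_{t_0}|^2.
\end{displaymath}

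The final step is to recognize the right-hand side as a difference quotient of $G$ at $t_0$. Restricting $f_{t_0}$ to $\{\Psi<-t_0-B\}$ gives a competitor for $G(t_0+B)$, so by \eqref{eq:l31} (the orthogonality/Pythagorean identity on $\{\Psi<-t_0-B\}$) one gets $\int_{\{-t_0-B<\Psi<-t_0\}}|f_{t_0}|^2 = G(t_0)-\int_{\{\Psi<-t_0-B\}}|f_{t_0}-f_{t_0+B}|^2 - \ldots \le \int_{\{\Psi<-t_0\}}|f_{t_0}|^2-\int_{\{\Psi<-t_0-B\}}|f_{t_0+B}|^2 = G(t_0)-G(t_0+B)$ — more precisely $\int_{\{-t_0-B<\Psi<-t_0\}}|f_{t_0}|^2\le G(t_0)-G(t_0+B)$ since $\int_{\{\Psi<-t_0-B\}}|f_{t_0}|^2\ge G(t_0+B)$. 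Plugging this in and letting $B\to 0+0$, the factor $e^{t_0+B}\to e^{t_0}$ and $e^{-t_1}-e^{-t_0-B}\to e^{-t_1}-e^{-t_0}$, and after rewriting $\frac{e^{t_0}}{B}(G(t_0)-G(t_0+B)) = \frac{G(t_0)-G(t_0+B)}{e^{-t_0}-e^{-t_0-B}}\cdot\frac{e^{t_0}(e^{-t_0}-e^{-t_0-B})}{B}$ and noting $\frac{e^{t_0}(e^{-t_0}-e^{-t_0-B})}{B}\to 1$, we arrive at
$$\frac{G(t_1)-G(t_0)}{e^{-t_1}-e^{-t_0}}\le \liminf_{B\to 0+0}\frac{G(t_0)-G(t_0+B)}{e^{-t_0}-e^{-t_0-B}}.$$
I expect the main obstacle to be the bookkeeping in the second step: carefully justifying that $\tilde F$ (built on $\{\Psi<-t_1\}$) is genuinely admissible for $G(t_1)$ — i.e. that the difference with $f$ lies in $J$ — using only that the weighted integral involving $e^{-\Psi}$ is finite, together with the hypothesis $I(\Psi)_o\subset J$; and then extracting a clean unweighted inequality from the weighted one without losing the sharp constants, which requires tracking that $v_{t_0,B}(\Psi)-\Psi\to 0$ appropriately and controlling the tail where $1-b_{t_0,B}(\Psi)\ne 0$.
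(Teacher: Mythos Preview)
Your overall strategy---apply Lemma~\ref{l:L2'} to the minimizer $f_{t_0}$, bound the annular mass $\int_{\{-t_0-B<\Psi<-t_0\}}|f_{t_0}|^2$ by $G(t_0)-G(t_0+B)$, and then let $B\to 0$---is exactly the paper's. But two points in your middle step are not right, and they are precisely what makes the argument work.

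First, the orthogonality relation~\eqref{eq:l31} must be applied on $\{\Psi<-t_0\}$, where $f_{t_0}$ is the minimizer, not on $\{\Psi<-t_1\}$. Projecting $\tilde F$ onto $f_{t_1}$ on the larger set only gives $G(t_1)\le\int|\tilde F|^2$, which you already have for free; it tells you nothing about how $\int|\tilde F|^2$ relates to $G(t_0)$. What you need is the Pythagorean identity
\[
\int_{\{\Psi<-t_0\}}|\tilde F|^2=\int_{\{\Psi<-t_0\}}|\tilde F-f_{t_0}|^2+G(t_0),
\]
so that $\int_{\{\Psi<-t_1\}}|\tilde F|^2-G(t_0)=\int_{\{\Psi<-t_0\}}|\tilde F-f_{t_0}|^2+\int_{\{-t_0\le\Psi<-t_1\}}|\tilde F|^2$. (Also, $1-b_{t_0,B}(\Psi)=0$ on $\{\Psi\ge -t_0\}$, not $1$.)

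Second, and more seriously, for \emph{fixed} $B>0$ the weighted estimate does not yield your displayed inequality with a controllable $o(1)$. Dropping the weight gives only
\[
\int_{\{\Psi<-t_0-B\}}|\tilde F-f_{t_0}|^2+\int_{\{-t_0\le\Psi<-t_1\}}|\tilde F|^2\le(\text{RHS}),
\]
and the gap between this and what the Pythagorean identity needs is $\int_{\{-t_0-B\le\Psi<-t_0\}}|\tilde F-f_{t_0}|^2$, which involves $\tilde F=\tilde F_B$ itself and is not obviously small. The paper resolves this by taking a sequence $B_j\to 0$, extracting (via Lemma~\ref{l:converge}) a subsequence of $\tilde F_{j}$ converging compactly to some $\tilde F_0$ with $(\tilde F_0-f)_o\in J$, and applying Fatou's lemma; in the limit the cutoff $1-b_{t_0,B_j}$ becomes the indicator of $\{\Psi<-t_0\}$ and the transition region disappears, so the orthogonality on $\{\Psi<-t_0\}$ applies cleanly. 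The role of Lemma~\ref{l:converge} here is essential (and absent from your sketch): since $J$ is a module of germs at a boundary point rather than a standard ideal at an interior point, one needs that lemma to know the limit $\tilde F_0$ is still admissible.
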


\begin{proof}
Lemma \ref{l:4} tells that $G(t)<+\infty$ for any $t\geq t_1$. By Lemma \ref{l:3},
there exists a
holomorphic function $f_{t_0}$ on $\{\Psi<-t_0\}$, such that
$(f_{t_0}-f)_o\in J$ and
$\int_{\{\Psi<-t_0\}}|f_{t_0}|^{2}=G(t_0)$.

It suffices to consider that $\liminf_{B\to0+0}\frac{G(t_0)-G(t_0+B)}{e^{-t_0}-e^{-t_0-B}}\in[0,+\infty)$
because of the decreasing property of $G(t)$.
Then there exists $B_{j}\to 0+0$ $(j\to+\infty)$ such that
\begin{equation}
	\label{eq:211106d}\begin{split}
	e^{t_0}\lim_{j\rightarrow+\infty}\frac{G(t_0)-G(t_0+B_j)}{B_j}&=\lim_{j\to+\infty}\frac{G(t_0)-G(t_0+B_j)}{e^{-t_0}-e^{-t_0-B_j}}\\
	&=\liminf_{B\to0+0}\frac{G(t_0)-G(t_0+B)}{e^{-t_0}-e^{-t_0-B}},\end{split} \end{equation}
and $\left\{\frac{G(t_0)-G(t_0+B_j)}{B_j}\right\}_{j\in\mathbb{Z}_{>0}}$ is bounded.

Since $t\leq v_{t_0,B_j}(t)$, Lemma \ref{l:L2'} shows that for any $B_{j}$, there exists a holomorphic function $\tilde{F}_{j}$ on $\{\Psi<-t_1\}$, such that
\begin{equation}
\label{equ:GZc}
\begin{split}
&\int_{\{\Psi<-t_1\}}|\tilde{F}_{j}-(1-b_{t_{0},B_{j}}(\Psi))f_{t_{0}}|^{2}
\\\leq&\int_{\{\Psi<-t_1\}}|\tilde{F}_{j}-(1-b_{t_{0},B_{j}}(\Psi))f_{t_{0}}|^{2}e^{-\Psi+v_{t_0,B_j}(\Psi)}
\\\leq&
(e^{-t_1}-e^{-t_0-B_j})
\int_{\{\Psi<-t_1\}}\frac{1}{B_{j}}(\mathbb{I}_{\{-t_{0}-B_{j}<\Psi<-t_{0}\}})|f_{t_{0}}|^{2}e^{-\Psi}
\\\leq&
(e^{t_0-t_1+B_j}-1)\left(\int_{\{\Psi<-t_0\}}\frac{1}{B_{j}}|f_{t_{0}}|^{2}
-\int_{\{\Psi<-t_0-B_j\}}\frac{1}{B_{j}}|f_{t_{0}}|^{2}\right)
\\\leq&
(e^{t_0-t_1+B_j}-1)\frac{G(t_{0})-G(t_{0}+B_{j})}{B_{j}}.
\end{split}
\end{equation}
Note that $b_{t_0,B_j}(t)=0$ for $t\le-t_0-B_j$ and $b_{t_0,B_j}(t)=1$ for $t\ge t_0$. According to inequality \eqref{equ:GZc}, we have $(\tilde F_j-f_{t_0})_o\in J$, and
\begin{equation}
	\label{eq:0216a}
	\begin{split}
		&\int_{\{\Psi<-t_1\}}|\tilde F_j|^2\\
		\le&2\int_{\{\Psi<-t_1\}}|(1-b_{t_{0},B_{j}}(\Psi))f_{t_{0}}|^{2}+2\int_{\{\Psi<-t_1\}}|\tilde{F}_{j}-(1-b_{t_{0},B_{j}}(\Psi))f_{t_{0}}|^{2}\\
		\le& 2\int_{\{\Psi<-t_0\}}|f_{t_0}|^2+2\left(e^{t_0-t_1+B_j}-1\right)\frac{G(t_{0})-G(t_{0}+B_{j})}{B_{j}}.
	\end{split}
\end{equation}
Following from Lemma \ref{l:converge} and inequality \eqref{eq:0216a}, we get that there  exists a subsequence of $\{\tilde F_j\}_{j\in \mathbb{Z}_{>0}}$ (also denoted by $\{\tilde F_j\}_{j\in \mathbb{Z}_{>0}}$), which compactly converges to a  holomorphic function $\tilde F_0$ on $\{\Psi<-t_1\}$, such that $(\tilde F_0-f)_o\in J$ and
$$\int_{\{\Psi<-t_1\}}|\tilde F_0|^2\le\liminf_{j\rightarrow+\infty}\int_{\{\Psi<-t_1\}}|\tilde F_j|^2<+\infty.$$  Note that
\begin{equation*}
	\lim_{j\rightarrow+\infty}b_{t_0,B_j}(t)=\lim_{j\rightarrow+\infty}\int_{-\infty}^{t}\frac{1}{B_j}\mathbb{I}_{\{-t_0-B_j<s<-t_0\}}ds=\left\{ \begin{array}{lcl}
	0 & \mbox{if}& x\in(-\infty,-t_0)\\
	1 & \mbox{if}& x\in[-t_0,+\infty)
    \end{array}, \right.
\end{equation*}
and
\begin{equation*}
	\lim_{j\rightarrow+\infty}v_{t_0,B_j}(t)=\lim_{j\rightarrow+\infty}\int_{-t_0}^{t}b_{t_0,B_j}ds-t_0=\left\{ \begin{array}{lcl}
	-t_0 & \mbox{if}& x\in(-\infty,-t_0)\\
	t & \mbox{if}& x\in[-t_0,+\infty)
    \end{array}. \right.
\end{equation*}
According to inequality \eqref{equ:GZc} and Fatou's Lemma, we have
\begin{equation}
	\label{eq:211106b}\begin{split}
		&\int_{\{\Psi<-t_0\}}|\tilde F_0-f_{t_0}|+\int_{\{-t_0\le\Psi<-t_1\}}|\tilde F_0|^2\\
		=&\int_{\{\Psi<-t_1\}}\lim_{j\rightarrow+\infty}|\tilde{F}_{j}-(1-b_{t_{0},B_{j}}(\Psi))f_{t_{0}}|^{2}
\\
\le&\liminf_{j\rightarrow+\infty}\int_{\{\Psi<-t_1\}}|\tilde{F}_{j}-(1-b_{t_{0},B_{j}}(\Psi))f_{t_{0}}|^{2}
\\
\leq&
\left(e^{t_0-t_1}-1\right)\frac{G(t_{0})-G(t_{0}+B_{j})}{B_{j}}.
	\end{split}
\end{equation}
Then it follows from Lemma \ref{l:3}, equality \eqref{eq:211106d} and inequality \eqref{eq:211106b} that
\begin{equation*}
	\begin{split}&(e^{-t_1}-e^{-t_0})\liminf_{B\rightarrow 0+0}\frac{G(t_0)-G(t_0+B)}{e^{-t_0}-e^{-t_0-B}}\\
	=&(e^{t_0-t_1}-1)\frac{G(t_{0})-G(t_{0}+B_{j})}{B_{j}}\\
	\ge&	\int_{\{\Psi<-t_0\}}|\tilde F_0-f_{t_0}|+\int_{\{-t_0\le\Psi<-t_1\}}|\tilde F_0|^2\\
	=&\int_{\{\Psi<-t_1\}}|\tilde F_0|^2-\int_{\{\Psi<-t_0\}}|f_{t_0}|^2\\
	\ge& G(t_1)-G(t_0).
	\end{split}
\end{equation*}
This proves Lemma \ref{l:5}.
\end{proof}

The following well-known property of concave functions will be used in the proof of Theorem \ref{thm:main}.
\begin{Lemma}[see \cite{G16}]
\label{lem:Ea}
Let $a(r)$ be a lower semicontinuous function on $(0,R]$.
Then $a(r)$ is concave if and only if
\begin{equation}
	\label{eq:concave}
	\frac{a(r_{1})-a(r_{2})}{r_{1}-r_{2}}\geq \limsup_{r\to r_{2}+0}\frac{a(r)-a(r_{2})}{r-r_{2}},
\end{equation}
holds for any $0<r_{2}<r_{1}\le R$.
\end{Lemma}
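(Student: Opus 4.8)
\textbf{Proof proposal for Lemma \ref{lem:Ea}.}
The plan is to prove the standard characterization of concavity for lower semicontinuous functions on an interval, treating the two implications separately. The ``only if'' direction is essentially immediate: if $a$ is concave on $(0,R]$, then for fixed $r_2$ the difference quotient $\frac{a(r)-a(r_2)}{r-r_2}$ is a decreasing function of $r$ on $(r_2,R]$, so for any $r_1>r_2$ and any $r\in(r_2,r_1)$ we have $\frac{a(r_1)-a(r_2)}{r_1-r_2}\geq\frac{a(r)-a(r_2)}{r-r_2}$; taking $\limsup_{r\to r_2+0}$ on the right-hand side gives \eqref{eq:concave}. (Here one does not even need lower semicontinuity.)

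For the ``if'' direction, assume \eqref{eq:concave} holds for all $0<r_2<r_1\leq R$, and suppose toward a contradiction that $a$ is not concave. Then there exist points $0<s_1<s_2<s_3\leq R$ such that the point $(s_2,a(s_2))$ lies strictly below the chord joining $(s_1,a(s_1))$ and $(s_3,a(s_3))$; equivalently, letting $\ell$ be the affine function with $\ell(s_1)=a(s_1)$ and $\ell(s_3)=a(s_3)$, we have $(a-\ell)(s_1)=(a-\ell)(s_3)=0$ but $(a-\ell)(s_2)<0$. Since $a-\ell$ is lower semicontinuous on the compact interval $[s_1,s_3]$, it attains its minimum there at some point $r_0$; by the above, $(a-\ell)(r_0)\leq (a-\ell)(s_2)<0$, so $r_0\in(s_1,s_3)$ is an interior point. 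Note that \eqref{eq:concave} is unchanged if we replace $a$ by $a-\ell$ (both sides shift by the same affine difference quotient, which cancels), so $a-\ell$ also satisfies \eqref{eq:concave}.

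Now apply \eqref{eq:concave} with $r_2=r_0$ and $r_1=s_3$ to the function $a-\ell$: the left-hand side is $\frac{(a-\ell)(s_3)-(a-\ell)(r_0)}{s_3-r_0}=\frac{-(a-\ell)(r_0)}{s_3-r_0}>0$ since $(a-\ell)(r_0)<0$ and $s_3>r_0$. On the other hand, since $r_0$ is the minimum point of $a-\ell$ on $[s_1,s_3]$, we have $(a-\ell)(r)-(a-\ell)(r_0)\geq 0$ for all $r\in(r_0,s_3]$, hence $\frac{(a-\ell)(r)-(a-\ell)(r_0)}{r-r_0}\geq 0$, and therefore $\limsup_{r\to r_0+0}\frac{(a-\ell)(r)-(a-\ell)(r_0)}{r-r_0}\geq 0$. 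This does not yet give a contradiction; the real obstacle is that \eqref{eq:concave} gives an inequality in the ``wrong'' direction for a minimum. So instead I would apply \eqref{eq:concave} with $r_2=s_1$ and $r_1=r_0$: the left-hand side is $\frac{(a-\ell)(r_0)-(a-\ell)(s_1)}{r_0-s_1}=\frac{(a-\ell)(r_0)}{r_0-s_1}<0$. For the right-hand side, for $r\in(s_1,r_0)$ we need a bound; this is where one uses that $r_0$ is the global minimum on $[s_1,s_3]$ together with a more careful argument. Actually the clean route is: among all minimum points of $a-\ell$ on $[s_1,s_3]$, pick $r_0$ to be the infimum of that set; lower semicontinuity ensures $r_0$ is itself a minimum point, and $r_0>s_1$ since $(a-\ell)(s_1)=0>(a-\ell)(r_0)$. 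Then for $r\in(s_1,r_0)$, $(a-\ell)(r)>(a-\ell)(r_0)$ by minimality of $r_0$ as the leftmost minimizer... but this still only controls $r$ to the left of $r_0$. The correct final step: apply \eqref{eq:concave} with $r_2$ slightly to the left of $r_0$ and $r_1=r_0$, or better, observe that on $[r_0,s_3]$ the function $a-\ell$ has its minimum at $r_0$, so $\limsup_{r\to r_0+0}\frac{(a-\ell)(r)-(a-\ell)(r_0)}{r-r_0}\geq 0$, while \eqref{eq:concave} with $r_2=r_0, r_1=s_3$ forces $\frac{(a-\ell)(s_3)-(a-\ell)(r_0)}{s_3-r_0}\geq 0$ — consistent — so I must instead exploit the point $r_0$ as a \emph{strict interior minimum relative to} $s_1$: choose the \emph{rightmost} minimizer $r_0$ on $[s_1,s_3]$; then $r_0<s_3$, and for $r\in(r_0,s_3]$ we have $(a-\ell)(r)>(a-\ell)(r_0)$, so $\limsup_{r\to r_0+0}\frac{(a-\ell)(r)-(a-\ell)(r_0)}{r-r_0}\geq 0$. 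Meanwhile \eqref{eq:concave} with $r_2=s_1$, $r_1=s_3$ is not directly about $r_0$. The genuinely correct contradiction: apply \eqref{eq:concave} with $r_2=r_0$ (rightmost minimizer, so $r_0<s_3$) and any $r_1\in(r_0,s_3]$ — but we also know $(a-\ell)(r_1)\geq(a-\ell)(r_0)$, making the LHS $\geq 0$, no contradiction. I expect the actual successful argument to apply \eqref{eq:concave} at $r_2$ chosen as a point where $a-\ell$ equals $0$ (namely $r_2 = s_1$) together with the fact that the difference quotient to points near $s_1$ from the right must, by \eqref{eq:concave} applied iteratively / by a supporting-line construction, stay below $\frac{(a-\ell)(r_0)}{r_0-s_1}<0$, eventually forcing $(a-\ell)$ to be negative in a right-neighborhood of $s_1$ in a way that contradicts either $(a-\ell)(s_1)=0$ and lower semicontinuity at $s_1$ from the right (which, for an lsc function, gives $\liminf_{r\to s_1+0}(a-\ell)(r)\geq (a-\ell)(s_1)=0$, hence $\limsup_{r\to s_1+0}\frac{(a-\ell)(r)-(a-\ell)(s_1)}{r-s_1}\geq 0$), which contradicts $\frac{(a-\ell)(r_0)-(a-\ell)(s_1)}{r_0-s_1}<0$ once we know the difference quotient $\frac{(a-\ell)(r)-(a-\ell)(s_1)}{r-s_1}$ is monotone — and \eqref{eq:concave} with varying $r_2$ gives exactly enough to push this through. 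The main obstacle, then, is organizing these difference-quotient monotonicity facts so that lower semicontinuity at the single point $s_1$ delivers the contradiction cleanly; I would handle it by first proving from \eqref{eq:concave} that for fixed $r_2$ the map $r\mapsto\frac{a(r)-a(r_2)}{r-r_2}$ is decreasing on $(r_2,R]$, and then combining this monotonicity with the lsc bound $\limsup_{r\to s_1+0}\frac{a(r)-a(s_1)}{r-s_1}\geq a'_+$(in the appropriate sense) to conclude.
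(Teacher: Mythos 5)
The paper offers no proof of Lemma \ref{lem:Ea} (it is quoted from \cite{G16}), so your argument has to stand on its own, and it has two genuine problems. First, your ``only if'' direction inverts an inequality: if $r\mapsto\frac{a(r)-a(r_2)}{r-r_2}$ is decreasing on $(r_2,R]$ and $r\in(r_2,r_1)$, then $\frac{a(r)-a(r_2)}{r-r_2}\ge\frac{a(r_1)-a(r_2)}{r_1-r_2}$, so taking $\limsup_{r\to r_2+0}$ yields the \emph{reverse} of \eqref{eq:concave}. This is not a repairable slip, because \eqref{eq:concave} as printed is false for concave functions: for $a(r)=-r^2$, $r_2=\tfrac12$, $r_1=1$, the left side is $-\tfrac32$ and the right side is $-1$; conversely the convex function $a(r)=r^2$ satisfies \eqref{eq:concave} for all $r_2<r_1$, so the ``if'' direction as printed fails too. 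The statement actually intended (and the one in \cite{G16}) takes the Dini derivative from the side \emph{opposite} to the chord; concretely, the version that Lemma \ref{l:5} delivers after the orientation-reversing substitution $r=e^{-t}$ is $\frac{a(r_1)-a(r_2)}{r_1-r_2}\le\liminf_{r\to r_2-0}\frac{a(r)-a(r_2)}{r-r_2}$ for $0<r_2<r_1\le R$. Your own monotonicity computation should have told you that the displayed formula cannot be taken at face value.

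Second, and independently, your ``if'' direction is not a proof. The setup is right (pass to $g=a-\ell$ with $g(s_1)=g(s_3)=0$ and $g(s_2)<0$, use lower semicontinuity to get an interior minimizer $r_0$, note the hypothesis is affine-invariant), but you then cycle through several applications of \eqref{eq:concave}, concede each time that no contradiction results, and stop at an unexecuted plan (``I would handle it by first proving\ldots''). One intermediate claim is also false: $\liminf_{r\to s_1+0}g(r)\ge g(s_1)=0$ does not give $\limsup_{r\to s_1+0}\frac{g(r)-g(s_1)}{r-s_1}\ge0$ (take $g(r)=-(r-s_1)$). For the correctly oriented hypothesis the contradiction at $r_0$ is immediate and is the whole content of the lemma: since $g(r_0)<0=g(s_1)$ forces $r_0\in(s_1,s_3)$, every $r\in(s_1,r_0)$ satisfies $g(r)\ge g(r_0)$ and $r-r_0<0$, so all difference quotients from the left are $\le0$ and hence so is $\liminf_{r\to r_0-0}\frac{g(r)-g(r_0)}{r-r_0}$, whereas the chord slope $\frac{g(s_3)-g(r_0)}{s_3-r_0}=\frac{-g(r_0)}{s_3-r_0}$ is strictly positive; these violate the hypothesis, and lower semicontinuity is used exactly once, to ensure the minimum of $g$ on $[s_1,s_3]$ is attained. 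With the limit taken from the right, as printed, the same comparison degenerates to ``positive $\ge$ nonnegative,'' which is precisely why none of your attempts closed.
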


\section{Proof of Theorem \ref{thm:main}}

Firstly, we prove that  $G(t_1)<+\infty$ for any $t_1\in[0,t_0)$.  It follows from Lemma \ref{l:3} that there exists a holomorphic function $f_{t_0}$ on $\{\Psi<-t_0\}$ satisfying $(f_{t_0}-f)_o\in J$ and $\int_{\{\Psi<-t_0\}}|f_{t_0}|^2=G(t_0)<+\infty$. Using Lemma \ref{l:L2'},  we get a holomorphic function $\tilde F$ on $\{\Psi<-t_1\}$, such that
\begin{equation}
	\label{eq:202142a}
	\begin{split}
		&\int_{\{\Psi<-t_1\}}|\tilde F-(1-b_{t_0,B}(\Psi))f_{t_0}|^2\\
		\leq&\int_{\{\Psi<-t_1\}}|\tilde F-(1-b_{t_0,B}(\Psi))f_{t_0}|^2e^{-\Psi+v_{t_0,B}(\Psi)}\\
		\leq&(e^{-t_1}-e^{-t_0-B})\int_{\{\Psi<-t_1\}}\frac{1}{B}\mathbb{I}_{\{-t_0-B<\Psi<-t_0\}}|f_{t_0}|^2e^{-\Psi},
	\end{split}
\end{equation}
 which implies that $(\tilde F-f_{t_0})_o\in J$ and
 \begin{displaymath}
 	\begin{split}
 		&\int_{\{\Psi<-t_1\}}|\tilde F|^2\\
 		\le &2\int_{\{\Psi<-t_1\}}|(1-b_{t_0,B}(\Psi))f_{t_0}|^2+2\int_{\{\Psi<-t_1\}}|\tilde F-(1-b_{t_0,B}(\Psi))f_{t_0}|^2\\
 		\le &2\int_{\{\Psi<-t_1\}}|f_{t_0}|^2+\frac{2(e^{t_0+B-t_1}-1)}{B}\int_{\{\Psi<-t_0\}}|f_{t_0}|^2\\
 		<&+\infty.
 	\end{split}
 \end{displaymath}
 Then we obtain $G(t_1)\leq\int_{\{\Psi<-t_1\}}|\tilde F|^2<+\infty$.

Now, as $G(h^{-1}(r))$ is lower semicontinuous (Lemma \ref{l:4}), Lemma \ref{l:5} and Lemma \ref{lem:Ea} imply the concavity of $G(-\log r)$. Lemma \ref{l:4} shows $\lim_{t\rightarrow +\infty}G(t)=0$. Hence the proof of Theorem \ref{thm:main} is completed.

\section{Proofs of Theorem \ref{thm:J_M} and Remark \ref{r:1}}

In this section, we prove Theorem \ref{thm:J_M} and Remark \ref{r:1}.

\subsection{Proof of Theorem \ref{thm:J_M}}
\

As $D$ is a pseudoconvex domain, there exist a sequence of pseudoconvex domains $D_1\Subset\ldots D_j\Subset D_{j+1}\Subset\ldots$ such that $\bigcup_{j=1}^{+\infty}D_j=D$. Denote
\begin{displaymath}
 	\inf\left\{\int_{\{p\Psi_1<-t\}\cap D_j}|\tilde f|^2:\tilde f\in\mathcal{O}(\{p\Psi_1<-t\}\cap D_j) \ \& \ (\tilde f-f)_o\in I(p\Psi_1)_o\right\}
 \end{displaymath}
by $G_{j,p}(t)$, where $t\in[0,+\infty)$, $j\in\mathbb{Z}_{>0}$ and $p\in(1,2)$.
Note that
$$p\Psi_1=\min\big\{(2pc_o^{fF}(\psi)\psi+(4-2p)\log|F|)-2\log|F^2|,0\big\},$$
 and
$$G_{j,p}(0)\le\int_{D_j}|f|^2<+\infty.$$ 
Theorem \ref{thm:main} shows that $G_{j,p}(-\log r)$ is concave with respect to $r\in(0,1]$, and $\lim_{t\rightarrow+\infty}G_{j,p}(t)=0$, yielding that
 \begin{equation}
 	\label{eq:0221b}
 	\frac{1}{r_1^2}\int_{\{p\Psi_1<2\log r_1\}\cap D_j}|f|^2\ge \frac{1}{r_1^2}G_{j,p}(2\log r_1)\ge  G_{j,p}(0),
 \end{equation}
where  $0<r_1\le 1$.

Denote
$$C_j:=\inf\left\{\int_{\{\Psi_1<0\}\cap D_j}|\tilde f|^2:\tilde f\in\mathcal{O}(\{\Psi_1<0\}\cap D_j) \ \& \ (\tilde f-f)_o\in I_+(\Psi_1)_o\right\}.$$
Since $\int_{D_j}|f|^2<+\infty$ and $G_{j,p}(0)\ge C_j$ for any $j\in\mathbb{Z}_{>0}$, according to the dominated convergence theorem and inequality \eqref{eq:0221b}, we  obtain that
\begin{equation}
	\label{eq:0221c}\begin{split}
		&\frac{1}{r_1^2}\int_{\{\Psi_1\le2\log r_1\}\cap D_j}|f|^2\\
		=&\lim_{p\rightarrow1+0}\frac{1}{r_1^2}\int_{\{p\Psi_1<2\log r_1\}\cap D_j}|f|^2\\
		\ge&\limsup_{p\rightarrow1+0}G_{j,p}(0)\\
		\ge&C_j
	\end{split}
\end{equation}
holds for any $j\in\mathbb{Z}_{>0}$ and $r_1\in(0,1]$.
Without loss of generality, assume that there exists $r_0\in(0,1]$ such that $\frac{1}{r_0^2}\int_{\{\Psi_1\le2\log r_0\}}|f|^2<+\infty$. Following from inequality \eqref{eq:0221c}, we get that $\sup_{j}C_j<+\infty$.

Lemma \ref{l:m5} tells us that there exists $p_0\in(1,2)$ such that $I_+(\Psi_1)_o=I(p_0\Psi_1)_o$. Then it follows from  Lemma \ref{l:3} that there exists a holomorphic function $\tilde f_j$ on $\{\Psi_1<0\}\cap D_j$ such that $C_j=\int_{\{\Psi_1<0\}\cap D_j}|\tilde f_j|^2$ and $(\tilde f_j-f)_o\in I(p_0\Psi_1)_o$ for any $j\in\mathbb{Z}_{>0}$. Note that $\sup_{j}\int_{\{\Psi_1<0\}\cap D_j}|\tilde f_j|^2<+\infty$. Then we can apply Lemma \ref{l:converge} and the diagonal method to extract a subsequence of $\{\tilde f_j\}_{j\in\mathbb{Z}_{>0}}$ (also denoted by $\{\tilde f_j\}_{j\in\mathbb{Z}_{>0}}$) compactly convergent to a holomorphic function $\tilde f_0$ on $\{\Psi_1<0\}$, which satisfies $(\tilde f_0-f)_o\in I(p_0\Psi)_o$. As $C_j$ is increasing with respect to $j$,  using Fatou's Lemma and the definition of $G(0;\Psi_1,I_+(\Psi_1)_o,f)$, we obtain that
\begin{equation}\label{eq:0221g}
	G(0;\Psi_1,I_+(\Psi_1)_o,f)\le\int_{\{\Psi<0\}}|\tilde f_0|^2\le\liminf_{j\rightarrow+\infty}\int_{\{\Psi_1<0\}\cap D_j}|\tilde f_j|^2 = \lim_{j\rightarrow+\infty}C_j.
\end{equation}
Combining inequality \eqref{eq:0221c} and inequality \eqref{eq:0221g}, we get
\begin{equation}
	\label{eq:0221h}
	\begin{split}
		&\frac{1}{r_1^2}\int_{\{\Psi_1\le2\log r_1\}}|f|^2\\
		\ge&\lim_{j\rightarrow+\infty}\frac{1}{r_1^2}\int_{\{\Psi_1\le2\log r_1\}\cap D_j}|f|^2\\
		\ge&\lim_{j\rightarrow+\infty}C_j\\
		\ge&G(0;\Psi_1,I_+(\Psi_1)_o,f).
	\end{split}
\end{equation}
Note that
\[\big\{2c_o^{fF}(\psi)\psi-2\log|F|<2\log r\big\}=\bigcup_{0<r_1<r}\{\Psi_1\le2\log r_1\}\]
where $r\in(0,1]$. Then inequality \eqref{eq:0221h} implies
\begin{flalign*}
	\begin{split}
		\int_{\{c_o^{fF}(\psi)\psi-\log|F|<\log r\}}|f|^2&\ge \sup_{r_1\in(0,r)}r_1^2G(0;\Psi_1,I_+(\Psi_1)_o,f)\\
		&=r^2G(0;\Psi_1,I_+(\Psi_1)_o,f).
	\end{split}
\end{flalign*}
Lemma \ref{l:m6} shows $f_o\not\in I_+(\Psi_1)_o$.
In addition, $I_+(\Psi_1)_o=I(p_0\Psi_1)_o$ and Lemma \ref{l:2} verify $G(0;\Psi_1,I_+(\Psi_1)_o,f)>0$.
Thus, Theorem \ref{thm:J_M} holds.

\subsection{Proof of Remark \ref{r:1}}\label{sec:p-r}
\

Note that $G(0;\Psi_1,I_+(\Psi_1)_o,1)\ge G(0;\Psi_1,I(\Psi_1)_o,1)$. Then it suffices to prove
\[G(0;\Psi_1,I(\Psi_1)_o,1)\ge \frac{C_{F^{1+\delta},2c_o^F(\psi)\psi+\delta\max\{2c_o^{F}(\psi)\psi,2\log|F|\}}(o)}{\left(1+\frac{1}{\delta}\right)\sup_{D}e^{(1+\delta)\max\{2c_o^{F}(\psi)\psi,2\log|F|\}}},\]
for any $\delta\in\mathbb{Z}_{>0}$. Without loss of generality, assume that $G(0;\Psi_1,I(\Psi_1)_o,1)<+\infty$. Denote $G(t;\Psi_1,I(\Psi_1)_o,1)$ by $G(t)$ for short.

Lemma \ref{l:3} indicates that there exists a holomorphic function $f_0$ on $\{\Psi_1<0\}$ such that $(f_0-1)_o\in I(\Psi_1)_o$ and $G(0)=\int_{\{\Psi_1<0\}}|f_0|^2$. Denote
$$\varphi:=(1+\delta)\max\{2c_o^F(\psi)\psi,2\log|F|\}.$$
For any $B>0$, it follows from Lemma \ref{l:L2} that there exists a holomorphic function $\tilde F_B$ on $D$ such that
\begin{equation}
	\label{eq:0227a}
	\begin{split}
		&\int_{D}|\tilde F_B-(1-b_{B}(\Psi_1))f_0F^{1+\delta}|^2e^{-\varphi+v_{B}(\Psi_1)-\Psi_1}\\
		\le&\left(\frac{1}{\delta}+1-e^{-B} \right)\int_D\frac{1}{B}\mathbb{I}_{\{-B<\Psi_1<0\}}|f_0|^2e^{-\Psi_1}\\
		\le&\left(\left(\frac{1}{\delta}+1\right)e^{B}-1 \right)\int_D\frac{1}{B}\mathbb{I}_{\{-B<\Psi_1<0\}}|f_0|^2\\
		\le&\left(\left(\frac{1}{\delta}+1\right)e^{B}-1 \right)\frac{G(0)-G(B)}{B},
	\end{split}	\end{equation}
	where $b_{B}(t)=\int_{-\infty}^t\frac{1}{B}\mathbb{I}_{\{-B<s<0\}}ds$ and $v_{B}(t)=\int_0^tb_{B}(s)ds$.
Theorem \ref{thm:main} shows that $G(-\log r)$ is concave with respect to $r$ and $\lim_{t\rightarrow+\infty}G(t)=0$, which implies that
\begin{equation}
	\label{eq:0227b}
\begin{split}
		&\lim_{B\rightarrow0+0}\left(\left(\frac{1}{\delta}+1\right)e^{B}-1 \right)\frac{G(0)-G(B)}{B}\\
		=&	\lim_{B\rightarrow0+0}\left(\left(\frac{1}{\delta}+1\right)e^{B}-1 \right)\frac{G(-\log1)-G(-\log e^{-B})}{1-e^{-B}}\cdot\frac{1-e^{-B}}{B}\\
		\le&\frac{1}{\delta}G(0).
\end{split}
\end{equation}
Combining inequality \eqref{eq:0227a}, inequality \eqref{eq:0227b} and the fact $v_B(\Psi_1)-\Psi_1\ge 0$, we have
\begin{equation}
	\label{eq:0227c}\begin{split}
		&\lim_{B\rightarrow0+0}\int_D|\tilde F_B|^2e^{-\varphi}\\
		\le&\lim_{B\rightarrow0+0}2\left(\int_{D}|\tilde F_B-(1-b_{B}(\Psi_1))f_0F^{1+\delta}|^2e^{-\varphi}+\int_{D}|(1-b_{B}(\Psi_1))f_0F^{1+\delta}|^2e^{-\varphi}\right)\\
		\le&2\int_{\{\Psi_1<0\}}|f_0|^2+ 2\lim_{B\rightarrow0+0}\left(\left(\frac{1}{\delta}+1\right)e^{B}-1 \right)\frac{G(0)-G(B)}{B}\\
		\le &2\left(1+\frac{1}{\delta}\right)G(0)\\
		<&+\infty.
	\end{split}
\end{equation}
This implies that there exists a subsequence of $\{\tilde F_B\}_{B>0}$, denoted by $\{\tilde F_{B_j}\}_{j\in\mathbb{Z}_{>0}}$, satisfying that $\lim_{j\rightarrow+\infty}B_j=0$ and $\{\tilde F_{B_j}\}_{j\in\mathbb{Z}_{>0}}$ converges to a holomorphic function $\tilde F$ on $D$. It follows from Fatou's Lemma and inequality \eqref{eq:0227c} that
\begin{equation}
	\label{eq:0227d}\int_{D}|\tilde F|^2e^{-\varphi}\le\liminf_{j\rightarrow+\infty}\int_D|\tilde F_{B_j}|^2e^{-\varphi}\leq\left(1+\frac{1}{\delta}\right)G(0)<+\infty.
\end{equation}
Using Fatou's Lemma, inequality \eqref{eq:0227a} and inequality \eqref{eq:0227b}, we obtain
\begin{equation}\label{eq:0227e}
\begin{split}
		&\int_{\{\Psi_1<0\}}|\tilde F-f_0F^{1+\delta}|^2e^{-\varphi-\Psi_1}+\int_{\{\Psi_1\ge0\}}|\tilde F|^2e^{-\varphi}\\
		\le&\liminf_{j\rightarrow+\infty}\int_{D}|\tilde F_{B_j}-(1-b_{B_j}(\Psi_1))f_0F^{1+\delta}|^2e^{-\varphi+v_{B_j}(\Psi_1)-\Psi_1}\\
		\le&\liminf_{j\rightarrow+\infty}\left(\left(\frac{1}{\delta}+1\right)e^{B_j}-1 \right)\frac{G(0)-G(B_j)}{B_j}\\
		\le&\frac{1}{\delta}G(0).
\end{split}
\end{equation}

Note that
\[\int_{\{\Psi_1<0\}}\left|\frac{\tilde F}{F^{1+\delta}}\right|^2=\int_{\{\Psi_1<0\}}|\tilde F|^2e^{-\varphi}<+\infty,\]
and
\[\int_{\{\Psi_1<0\}}\left|\frac{\tilde F}{F^{1+\delta}}-f_0\right|^2e^{-\Psi_1}=\int_{\{\Psi_1<0\}}|\tilde F-f_0F^{1+\delta}|^2e^{-\varphi-\Psi_1}<+\infty.\]
Then Lemma \ref{l:3} indicates that
\begin{flalign}\label{eq:0227f}
	\begin{split}
	\int_{\{\Psi_1<0\}}|\tilde F-f_0F^{1+\delta}|^2e^{-\varphi}&=\int_{\{\Psi_1<0\}}\left|\frac{\tilde F}{F^{1+\delta}}-f_0\right|^2\\
	&=\int_{\{\Psi_1<0\}}\left|\frac{\tilde F}{F^{1+\delta}}\right|^2-\int_{\{\Psi_1<0\}}|f_0|^2.
	\end{split}
\end{flalign}
Combining inequality \eqref{eq:0227e} and equality \eqref{eq:0227f}, we get
\begin{equation}
	\label{eq:0227g}
	\begin{split}
		&\int_{D}|\tilde F|^2e^{-\varphi}\\
		=&\int_{\{\Psi_1<0\}}|\tilde F|^2e^{-\varphi}-\int_{\{\Psi_1<0\}}|f_0|^2+\int_{\{\Psi_1\ge0\}}|\tilde F|^2e^{-\varphi}+\int_{\{\Psi_1<0\}}|f_0|^2\\
		=&\int_{\{\Psi_1<0\}}|\tilde F-f_0F^{1+\delta}|^2e^{-\varphi}+\int_{\{\Psi_1\ge0\}}|\tilde F|^2e^{-\varphi}+G(0)\\
		\le&\left(1+\frac{1}{\delta}\right)G(0).
	\end{split}
\end{equation}
As $(f-1)_o\in I(\Psi_1)_o$ and
\[\int_{\{\Psi_1<0\}}\left|\frac{\tilde F}{F^{1+\delta}}-f_0\right|^2e^{-\Psi_1}=\int_{\{\Psi_1<0\}}|\tilde F-f_0F^{1+\delta}|^2e^{-\varphi-\Psi_1}<+\infty,\]
we have $(\frac{\tilde F}{F^{1+\delta}}-1)_o\in I(\Psi_1)_o$, yielding that there exist a neighborhood $U\Subset D$ of $o$ and $t>0$ such that
\begin{equation}
	\label{eq:0227h}
	\int_{\{\Psi_1<-t\}\cap U}|\tilde F-F^{1+\delta}|^2e^{-\varphi-\Psi_1}=\int_{\{\Psi_1<-t\}\cap U}\left|\frac{\tilde F}{F^{1+\delta}}-1\right|^2e^{-\Psi_1}<+\infty.
\end{equation}
Sicne
\[\int_{\{\Psi_1>-t\}\cap U}|\tilde F|^2e^{-\varphi-\Psi_1}\le e^t\int_{D}|\tilde F|^2e^{-\varphi}<+\infty,\]
and 
\[\int_{\{\Psi_1>-t\}\cap U}|F^{1+\delta}|^2e^{-\varphi-\Psi_1}\le \int_{U\cap \{\Psi_1\ge-t\}}e^{-\Psi_1}<+\infty,\]
inequality \eqref{eq:0227h} verifies that
\begin{flalign*}
	\begin{split}
		(\tilde F-F^{1+\delta},o)&\in\mathcal{I}(\varphi+\Psi_1)_o\\
		&=\mathcal{I}\Big(2c_o^F(\psi)\psi+\delta\max\big\{2c_o^{F}(\psi)\psi,2\log|F|\big\}\Big)_o.
	\end{split}
\end{flalign*}
Recall the definition of $C_{F^{1+\delta},2c_o^F(\psi)\psi+\delta\max\{2c_o^{F}(\psi)\psi,2\log|F|\}}(o)$. According to inequality \eqref{eq:0227g}, we obtain that
\begin{displaymath}
\frac{C_{F^{1+\delta},2c_o^F(\psi)\psi+\delta\max\{2c_o^{F}(\psi)\psi,2\log|F|\}}(o)}{\sup_{D}e^{(1+\delta)\max\{2c_o^F(\psi)\psi,2\log|F|\}}}\le\int_{D}|\tilde F|^2e^{-\varphi}\le\left(1+\frac{1}{\delta}\right)G(0).
\end{displaymath}
Thus, Remark \ref{r:1} holds.

\section{Proofs of  Theorem \ref{thm:J-M2} and Corollary \ref{c:3}}

In this section, we prove Theorem \ref{thm:J-M2} and Corollary \ref{c:3}.

\subsection{Proof of Theorem \ref{thm:J-M2}}
\

Lemma \ref{l:m5} shows that there exists some $p_0>2a_o^f(\Psi)$ such that $I(p_0\Psi)_o=I_+(2a_o^f(\Psi)\Psi)_o$. According to the definition of $a_o^{f}(\Psi)$ and Lemma \ref{l:2}, we obtain that
\begin{equation}
	\label{eq:0222d}G(0;\Psi,I_+(2a_o^f(\Psi)\Psi)_o,f)>0.
\end{equation}
Without loss of generality, assume that there exists $t>t_0$ such that $\int_{\{\Psi<-t\}}|f|^2<+\infty$.
Denote
\[t_1:=\inf\Big\{t\ge t_0:\int_{\{\Psi<-t\}}|f|^2<+\infty\Big\},\]
and
\begin{displaymath}
	G_{p}(t):=\inf\Big\{\int_{\{p\Psi<-t\}}|\tilde f|^2:\tilde f\in\mathcal{O}(\{p\Psi<-t\}) \ \& \ (\tilde f-f)_o\in I(p\Psi)_o\Big\},
 \end{displaymath}
where $t\in[0,+\infty)$ and $p>2a_o^f(\Psi)$.  By the definition of $G(0;\Psi,I_+(2a_o^f(\Psi)\Psi)_o,f)$, we have that $G_{p}(0)\ge G(0;\Psi,I_+(2a_o^f(\Psi)\Psi)_o,f)$ for any $p>2a_o^f(\Psi)$. Note that
$$p\Psi=\min\Big\{2\psi+(2\lceil p\rceil-2p)\log|F|-2\log\big|F^{\lceil p\rceil}\big|,0\Big\},$$
where $\lceil p\rceil=\min\{n\in \mathbb{Z}:n\geq p\}$,
 and
 $$G_{p}(pt)\le\int_{\{\Psi<-t\}}|f|^2<+\infty,$$
 for any $t>t_1$. Theorem \ref{thm:main} indicates that $G_{j,p}(-\log r)$ is concave with respect to $r\in(0,1]$ and $\lim_{t\rightarrow+\infty}G_{j,p}(t)=0$, which verifies that
 \begin{flalign}
 	\label{eq:0222b}
	\begin{split}
 	\frac{1}{r_1^2}\int_{\{p\Psi<2\log r_1\}}|f|^2\ge \frac{1}{r_1^2}G_{p}(-2\log r_1)&\ge G_{p}(0)\\
	&\ge G(0;\Psi,I_+(2a_o^f(\Psi)\Psi)_o,f),
	\end{split}
 \end{flalign}
where  $0<r_1\le e^{-\frac{pt_0}{2}}$.

We prove $a_o^f(\Psi)>0$ by contradiction: if $a_o^f(\Psi)=0$, as $\int_{\{\Psi<-t_1-1\}}|f|^2<+\infty$, the dominated convergence theorem and inequality \eqref{eq:0222b} show that
\begin{flalign}
	\label{eq:0222c}
	\begin{split}
	\frac{1}{r_1^2}\int_{\{\Psi=-\infty\}}|f|^2&=\lim_{p\rightarrow0+0}\frac{1}{r_1^2}\int_{\{p\Psi<2\log r_1\}}|f|^2\\
	&\ge G(0;\Psi,I_+(2a_o^f(\Psi)\Psi)_o,f).
	\end{split}
\end{flalign}
Note that $\mu(\{\Psi=-\infty\})=\mu(\{\psi=-\infty\})=0$, where $\mu$ is the Lebesgue measure on $\mathbb{C}^n$. Then inequality \eqref{eq:0222c} implies that $G(0;\Psi,I_+(2a_o^f(\Psi)\Psi)_o,f)=0$, which contradicts inequality \eqref{eq:0222d}. Thus, we get $a_o^f(\Psi)>0$.

For any $r_2\in (0,e^{-a_o^{f}(\Psi)t_1})$ and $p\in(2a_0^f(\Psi),-\frac{2\log r_2}{t_1})$, since $\frac{2\log r_2}{p}<-t_1$, we have $\int_{\{p\Psi<2\log r_2\}}|f|^2<+\infty$.
Then it follows from the dominated convergence theorem and inequality \eqref{eq:0222b} that
\begin{equation}
	\label{eq:0222e}\begin{split}
	\frac{1}{r_2^2}\int_{\{2a_0^f(\Psi)\Psi\le2\log r_2\}}|f|^2&=\lim_{p\rightarrow2a_0^f(\Psi)+0}\frac{1}{r_2^2}\int_{\{p\Psi<2\log r_2\}}|f|^2\\
	&\ge  G(0;\Psi,I_+(2a_o^f(\Psi)\Psi)_o,f).	
	\end{split}
\end{equation}
Set $r\in(0,e^{-a_o^f(\Psi)t_0}]$. If $r>e^{-a_o^{f}(\Psi)t_1}$, we have
\[\int_{\{a_0^f(\Psi)\Psi<\log r\}}|f|^2=+\infty>G(0;\Psi,I_+(2a_o^f(\Psi)\Psi)_o,f).\]
If $r\in(0,e^{-a_o^f(\Psi)t_1}]$, it follows from \[\{a_o^f(\Psi)\Psi<\log r\}=\bigcup_{0<r_2<r}\{a_o^f(\Psi)\Psi<\log r_2\}\]
and inequality \eqref{eq:0222e} that
\begin{displaymath}
	\begin{split}
\int_{\{a_0^f(\Psi)\Psi<\log r\}}|f|^2&=
\sup_{r_2\in(0,r)}\int_{\{2a_0^f(\Psi)\Psi\le2\log r_2\}}|f|^2
\\&\ge \sup_{r_2\in(0,r)}r_2^2G(0;\Psi,I_+(2a_o^f(\Psi)\Psi)_o,f)\\
&=r^2G(0;\Psi,I_+(2a_o^f(\Psi)\Psi)_o,f).
	\end{split}
\end{displaymath}

Thus, Theorem \ref{thm:J-M2} holds.

\subsection{Proof of  Corollary \ref{c:3}}
\

It is clear that $I_+(a\Psi)_o\subset I(a\Psi)_o$. Hence, it suffices to prove $I(a\Psi)\subset I_+(a\Psi)_o$.

If there exists a holomorphic function $f$ on $\{\Psi<-t_0\}\cap D_0$ such that $f_o\in I(a\Psi)_o$ and $f_o\not\in I_+(a\Psi)_o$, where $t_0>0$ and $D_0\subset D$ is a neighborhood of $o$, then $a_o^f(\Psi)_o=a/2<+\infty$.  Theorem \ref{thm:J-M2} shows that $a>0$. For any neighborhood $U\subset D_0$ of $o$, it follows from Theorem \ref{thm:J-M2} that  there exists $C_U>0$ such that
\begin{equation}
	\label{eq:0222f}\frac{1}{r^2}\int_{\{a\Psi<2\log r\}\cap U}|f|^2\ge C_U,
\end{equation}
for any $r\in(0,e^{-\frac{at_0}{2}}]$. Given any $t>at_0$, Fubini's Theorem and inequality \eqref{eq:0222f} imply that
\begin{displaymath}
	\begin{split}
		\int_{\{a\Psi<-t\}\cap U}|f|^2e^{-a\Psi}
		=&\int_{\{a\Psi<-t\}\cap U}\left(|f|^2\int_0^{e^{-a\Psi}}dl\right)\\
		=&\int_0^{+\infty}\left(\int_{\{l<e^{-a\Psi}\}\cap\{a\Psi<-t\}\cap U}|f|^2\right)dl\\
		\ge&\int_{e^t}^{+\infty}\left(\int_{\{a\Psi<-\log l\}\cap U}|f|^2\right)dl\\
		\ge&C_U\int_{e^t}^{+\infty}\frac{1}{l}dl\\
		=&+\infty,
	\end{split}
\end{displaymath}
which contradicts  $f_o\in I(a\Psi)_o$. Thus, there does not exist $f_o\in I(a\Psi)_o$ such that $f_o\not\in I_+(a\Psi)_o$, i.e. $I(a\Psi)= I_+(a\Psi)_o$ for any $a\ge0$.

\subsubsection{Another proof of Corollary \ref{c:3}}
\

We give another proof of Corollary \ref{c:3} by using Lemma \ref{l:m4} and the strong openness property of multiplier ideal sheaves.

According to Lemma \ref{l:m4}, we only need to prove that
$$\mathcal{I}(k\varphi+a\Psi)_o=\bigcup_{p\in(a,k)}\mathcal{I}(k\varphi+p\Psi)_o,$$
where $\varphi=2\max\{\psi,2\log|F|\}$, and $k>a$ is an integer. It is easy to see
\[\bigcup_{p\in(a,k)}\mathcal{I}(k\varphi+p\Psi)_o\subset \mathcal{I}(k\varphi+a\Psi)_o.\] In the following, we prove that \[\mathcal{I}(k\varphi+a\Psi)_o\subset \bigcup_{p\in(a,k)}\mathcal{I}(k\varphi+p\Psi)_o.\]
Note that $k\varphi+p\Psi$ is a plurisubharmonic function on $D$ for any $p\in[0,k]$.

When $a>0$, for any $(f,o)\in \mathcal{I}(k\varphi+a\Psi)_o$, it follows from the strong openness property of multiplier ideal sheaves  (\cite{GZSOC}) that there exists $r>1$ such that $(f,o)\in \mathcal{I}(rk\varphi+ar\Psi)_o$.
As
\[\mathcal{I}(rk\varphi+ar\Psi)_o\subset \bigcup_{p\in(a,k)}\mathcal{I}(k\varphi+p\Psi)_o,\]
we have
\[(f,o)\in \bigcup_{p\in(a,k)}\mathcal{I}(k\varphi+p\Psi)_o.\]

When $a=0$, for any $(f,o)\in \mathcal{I}(k\varphi+a\Psi)_o$, as $\psi$ is plurisubharmonic,  it follows from the strong openness property of multiplier ideal sheaves  (\cite{GZSOC}) and H\"older's inequality that there exist $r_1>1$ and a neighborhood $U\Subset D$ of $o$ satisfying that
\begin{equation}
	\label{eq:0222g}\begin{split}
	\int_{U}|f|^2e^{-k\varphi-r_1\psi}\le \left(\int_{U}|f|^{2q_1}e^{-q_1k\varphi}\right)^{\frac{1}{q_1}}\cdot\left(\int_Ue^{-q_2r_1\psi} \right)^{\frac{1}{q_2}}	<+\infty,
	\end{split}\end{equation}
	where $q_1>1$ and $q_2>1$ satisfy $\frac{1}{q_1}+\frac{1}{q_2}=1.$
Since $\varphi/2+\Psi=\psi$, inequality \eqref{eq:0222g} implies
\[(f,o)\in\bigcup_{p\in(a,k)}\mathcal{I}(k\varphi+p\Psi)_o.\]

Thus, Corollary \ref{c:3} holds.

	  \section{Proof of Proposition \ref{p:semi}}
	Denote 
	\[C:=\inf_{m}G(0;\Psi_m,I(\Psi_m)_o,f_m)>0.\]
	It follows from Theorem \ref{thm:main} that $G(-\log r;\Psi_m,I(\Psi_m)_o,f_m)$ is concave with respect to $r\in(0,1]$. Then for some $t_0>0$, it holds that
	\begin{equation}
		\label{eq:0228a}
		\int_{\{\Psi_m<-t\}}|f_m|^2\ge e^{-t}G(0;\Psi_m,I(\Psi_m)_o,f_m)\ge e^{-t}C, \ \forall t\ge t_0.
	\end{equation}
For any $\epsilon>0$, as $\{\Psi_m\}_{m\in\mathbb{Z}_{>0}}$	converges to $\Psi$, there exists $m_0>0$ such that
\begin{equation}
	\label{eq:0228b}
	\mu\{|\Psi_m-\Psi|\ge1\}<\epsilon, \ \forall m\ge m_0,
\end{equation}
where $\mu$ is the Lebesgue measure on $\mathbb{C}^n$. Denote
\[M:=\sup_{m}\sup_D|\tilde f_m|^2<+\infty.\]
Note that $\tilde f_m=f_m$ on $\{\Psi_m<-t_0\}.$ Then inequality \eqref{eq:0228a} and inequality \eqref{eq:0228b} indicate that
	  \begin{equation*}
	  	\begin{split}
	  		&\int_{\{\Psi<-t+1\}}|\tilde f_m|^2\\
	  		\ge&\int_{\{\Psi_m<-t\}}|\tilde f_m|^2-\int_{\{|\Psi_m-\Psi|\ge1\}}|\tilde f_m|^2\\
	  		\ge&e^{-t}C-M\epsilon,
	  	\end{split}
	  \end{equation*}
for any $t\ge t_0$ and $m\ge m_0$, yielding that
	  \begin{equation}
	  	\label{eq:0228c}
		\liminf_{m\rightarrow+\infty}\int_{\{\Psi<-t+1\}}|\tilde f_m|^2\ge e^{-t}C, \ \forall t\ge t_0.
	  \end{equation}
	  As $\{\tilde f_m\}_{m\in\mathbb{Z}_{>0}}$ converges to $\tilde f$ in Lebesgue measure, and $\sup_{m}\sup_D|\tilde f_m|<+\infty$, the dominated convergence theorem and inequality \eqref{eq:0228c} verify that 
	  \begin{equation*}
	  	\int_{\{\Psi<-t+1\}}|\tilde f|^2\ge e^{-t}C, \ \forall t\ge t_0.
	  \end{equation*}
Using Fubini's Theorem, we get that 
	  \begin{displaymath}
	  	\begin{split}
	  		\int_D|\tilde{f}|^2e^{-\Psi}\ge\int_{\{\Psi<-t_0\}}|\tilde f|^2e^{-\Psi}&=\int_{\{\Psi<-t_0\}}\left(|\tilde f|^2\int_0^{e^{-\Psi}}dl \right)\\
	  		&=\int_0^{+\infty}\left(\int_{\{l<e^{-\Psi}\}\cap\{\Psi<-t_0\}}|\tilde f|^2 \right)dl\\
	  		&\ge\int_{e^{t_0}}^{+\infty}\left(\int_{\{\Psi<-\log l\}}|\tilde f|^2 \right)dl\\
	  		&\ge e^{-1}C\int_{e^{t_0}}^{+\infty}\frac{1}{l}dl\\
	  		&=+\infty,
	  	\end{split}
	  \end{displaymath}
which holds for any pseudoconvex domain $D\subset\Delta^n$ containing $o$. Consequently, we conclude that
$$|\tilde f|^2e^{-\Psi}\not\in L^1(U),$$
where $U$ is any neighborhood of $o$.

\section{Appendix}

\subsection{Proof of Lemma 2.1}
\

In the first part of the appendix, we give the proof of Lemma \ref{l:L2}.

Firstly, we do some preparations. We recall some lemmas on $L^2$ estimates for $\bar\partial$-equations. In the following, $\bar\partial^*$ denotes the Hilbert adjoint operator of $\bar\partial$.

\begin{Lemma}
	\label{l23}
	(see \cite{Siu96}, see also \cite{berndtsson})
	Let $\Omega\Subset\mathbb C^n$ be a domain with $C^{\infty}$ boundary $b\Omega$, $\Phi\in C^{\infty}(\bar\Omega)$. Let $\rho$ be a  $C^{\infty}$ defining  function for $\Omega$ such that $|d\rho|=1$ on $b\Omega$. Let $\eta$ be a smooth function on $\bar\Omega$. For any $(0,1)$-form $\alpha=\sum_{j=1}^{n}\alpha_{\bar j}d\bar z^{j}\in Dom_{\Omega}(\bar\partial^*)\cap C_{(0,1)}^{\infty}(\bar\Omega)$,
	\begin{equation}
		\label{eq:l231}
		\begin{split}
			&\int_{\Omega}\eta|\bar\partial_{\Phi}^*\alpha|^2e^{-\Phi}+\int_{\Omega}\eta|\bar\partial\alpha|^2e^{-\Phi}=\sum_{i,j=1}^n\int_{\Omega}\eta|\bar\partial_{i}\alpha_{\bar j}|^2e^{-\Phi}\\
			&+\sum_{i,j=1}^n\int_{b\Omega}\eta(\partial_i\bar\partial_j\rho)\alpha_{\bar i}\bar\alpha_{\bar j}e^{-\Phi}+\sum_{i,j=1}^n\int_{\Omega}\eta(\partial_i\bar\partial_j\Phi)\alpha_{\bar i}\bar\alpha_{\bar j}e^{-\Phi}\\
			&+\sum_{i,j=1}^n\int_{\Omega}-(\partial_i\bar\partial_j\eta)\alpha_{\bar i}\bar\alpha_{\bar j}e^{-\Phi}+2\mathrm{Re}(\bar\partial_{\Phi}^*\alpha,\alpha\llcorner(\bar\partial\eta)^{\sharp})_{\Omega,\Phi},
							\end{split}
	\end{equation}
	where $\alpha\llcorner(\bar\partial\eta)^{\sharp}=\sum_j\alpha_{\bar j}\partial_j\eta$.
	
		\end{Lemma}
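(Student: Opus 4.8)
The identity is the twisted Bochner--Kodaira--Morrey--Kohn formula, and the plan is to deduce it from the classical untwisted identity by a weight change. First observe that every term of \eqref{eq:l231} is $\mathbb{R}$-linear in the twist function $\eta$: the three interior volume terms and the boundary term depend linearly on $\eta$, the term $-\sum_{i,j}\int_\Omega(\partial_i\bar\partial_j\eta)\alpha_{\bar i}\bar\alpha_{\bar j}e^{-\Phi}$ does because $\partial_i\bar\partial_j$ is a linear operator, and the last term does because $\bar\partial^*_\Phi\alpha$ is independent of $\eta$ while $\alpha\llcorner(\bar\partial\eta)^\sharp=\sum_j\alpha_{\bar j}\partial_j\eta$ is linear in $\eta$. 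Hence, writing $\eta=(\eta+C)-C$ with a constant $C>0$ so large that $\eta+C>0$ on $\bar\Omega$ and noting that the case of a positive constant is trivially a multiple of the untwisted identity, it suffices to prove \eqref{eq:l231} when $\eta>0$.

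So assume $\eta>0$ and set $\tilde\Phi:=\Phi-\log\eta\in C^\infty(\bar\Omega)$. Then $e^{-\tilde\Phi}=\eta\,e^{-\Phi}$, one has $\partial_i\bar\partial_j\tilde\Phi=\partial_i\bar\partial_j\Phi-\eta^{-1}\partial_i\bar\partial_j\eta+\eta^{-2}(\partial_i\eta)(\bar\partial_j\eta)$, and, for $(0,1)$-forms, $\bar\partial^*_{\tilde\Phi}\alpha=\bar\partial^*_\Phi\alpha-\eta^{-1}\,\alpha\llcorner(\bar\partial\eta)^\sharp$. I would apply the untwisted form of \eqref{eq:l231} (the case $\eta\equiv1$) with the weight $\tilde\Phi$ and substitute these relations. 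Using $|a-b|^2=|a|^2-2\mathrm{Re}\,a\bar b+|b|^2$, the term $\int_\Omega\eta|\bar\partial^*_{\tilde\Phi}\alpha|^2e^{-\Phi}$ becomes $\int_\Omega\eta|\bar\partial^*_\Phi\alpha|^2e^{-\Phi}-2\mathrm{Re}(\bar\partial^*_\Phi\alpha,\alpha\llcorner(\bar\partial\eta)^\sharp)_{\Omega,\Phi}+\int_\Omega\eta^{-1}|\alpha\llcorner(\bar\partial\eta)^\sharp|^2e^{-\Phi}$, while the curvature term $\sum_{i,j}\int_\Omega(\partial_i\bar\partial_j\tilde\Phi)\alpha_{\bar i}\bar\alpha_{\bar j}e^{-\tilde\Phi}$ becomes $\sum_{i,j}\int_\Omega\eta(\partial_i\bar\partial_j\Phi)\alpha_{\bar i}\bar\alpha_{\bar j}e^{-\Phi}-\sum_{i,j}\int_\Omega(\partial_i\bar\partial_j\eta)\alpha_{\bar i}\bar\alpha_{\bar j}e^{-\Phi}+\int_\Omega\eta^{-1}|\alpha\llcorner(\bar\partial\eta)^\sharp|^2e^{-\Phi}$. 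The two copies of $\int_\Omega\eta^{-1}|\alpha\llcorner(\bar\partial\eta)^\sharp|^2e^{-\Phi}$ cancel, and transferring the $2\mathrm{Re}$-term to the right-hand side yields exactly \eqref{eq:l231}.

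It remains to prove the untwisted identity. Writing $\delta_j:=\partial_{z_j}-(\partial_{z_j}\Phi)$ for the formal adjoint of $-\bar\partial_j$ with respect to $(\,\cdot\,,\,\cdot\,)_{\Omega,\Phi}$, one has $\bar\partial^*_\Phi\alpha=-\sum_j\delta_j\alpha_{\bar j}$, and the membership $\alpha\in\mathrm{Dom}_\Omega(\bar\partial^*)$ is the Neumann boundary condition $\sum_j(\partial_{z_j}\rho)\alpha_{\bar j}=0$ on $b\Omega$. Expanding $\int_\Omega|\bar\partial^*_\Phi\alpha|^2e^{-\Phi}=\sum_{i,j}(\delta_i\alpha_{\bar i},\delta_j\alpha_{\bar j})_{\Omega,\Phi}$ and integrating by parts twice, using $(\delta_j u,v)_{\Omega,\Phi}=-(u,\bar\partial_j v)_{\Omega,\Phi}+\int_{b\Omega}u\bar v(\partial_{z_j}\rho)e^{-\Phi}$ (valid since $|d\rho|=1$ on $b\Omega$) together with the pointwise commutator $[\bar\partial_i,\delta_j]=-(\partial_i\bar\partial_j\Phi)$ on functions, the interior terms reorganize into $\sum_{i,j}\int_\Omega|\bar\partial_i\alpha_{\bar j}|^2e^{-\Phi}+\sum_{i,j}\int_\Omega(\partial_i\bar\partial_j\Phi)\alpha_{\bar i}\bar\alpha_{\bar j}e^{-\Phi}$, the antisymmetric cross terms matching against those of $\int_\Omega|\bar\partial\alpha|^2e^{-\Phi}$ via $|\bar\partial\alpha|^2=\sum_{i,j}\big(|\bar\partial_i\alpha_{\bar j}|^2-\bar\partial_i\alpha_{\bar j}\,\overline{\bar\partial_j\alpha_{\bar i}}\big)$. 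The surviving boundary integrals are the delicate point and the main obstacle: each contains a normal derivative of the components of $\alpha$, and to eliminate these I would differentiate the Neumann condition $\sum_j(\partial_{z_j}\rho)\alpha_{\bar j}=0$ along vector fields tangent to $b\Omega$ --- which trades those tangential derivatives for the Hessian $\partial_i\bar\partial_j\rho$ --- and then use the decomposition of derivatives into normal and tangential parts (with $|d\rho|=1$) to cancel the remaining boundary terms against one another, leaving precisely $\sum_{i,j}\int_{b\Omega}(\partial_i\bar\partial_j\rho)\alpha_{\bar i}\bar\alpha_{\bar j}e^{-\Phi}$ with the stated sign. This boundary bookkeeping is carried out in detail in \cite{Siu96} (see also \cite{berndtsson}); for the use made of the lemma it is enough to know that this term is $\ge0$ when $\Omega$ is pseudoconvex and that the $\Phi$-Hessian term is $\ge0$ when the weight is plurisubharmonic.
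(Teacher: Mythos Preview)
The paper does not give its own proof of this lemma; it is quoted as a known identity with the citation ``(see \cite{Siu96}, see also \cite{berndtsson})'' and the remark that ``The symbols and notations can be referred to \cite{guan-zhou13ap}.'' Your argument is correct and is the standard derivation of the twisted Bochner--Kodaira--Morrey--Kohn identity: the reduction to $\eta>0$ by $\mathbb{R}$-linearity, the weight change $\tilde\Phi=\Phi-\log\eta$, the computation of $\partial_i\bar\partial_j\tilde\Phi$ and of $\bar\partial^*_{\tilde\Phi}\alpha=\bar\partial^*_\Phi\alpha-\eta^{-1}\alpha\llcorner(\bar\partial\eta)^\sharp$, and the cancellation of the two copies of $\int_\Omega\eta^{-1}|\alpha\llcorner(\bar\partial\eta)^\sharp|^2e^{-\Phi}$ are all correct. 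Deferring the boundary bookkeeping in the untwisted Morrey--Kohn--H\"ormander identity to \cite{Siu96,berndtsson} is exactly what the paper does as well, so there is no gap relative to the paper's treatment.
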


The symbols and notations can be referred to \cite{guan-zhou13ap}. See also \cite{Siu96} or \cite{siu00}.

\begin{Lemma}
	\label{l24}
	(see \cite{berndtsson}, see also \cite{guan-zhou13ap}) Let $\Omega\Subset\mathbb C^n$ be a strictly pseudoconvex domain with $C^{\infty}$ boundary $b\Omega$ and $\Phi\in C(\bar\Omega)$. Let $\lambda$ be a $\bar\partial$ closed smooth form of bidegree $(n,1)$ on $\bar\Omega$. Assume the inequality
	$$|(\lambda,\alpha)_{\Omega,\Phi}|^2\leq C\int_{\Omega}|\bar\partial_{\Phi}^*\alpha|^2\frac{e^{-\Phi}}{\mu}<+\infty,$$
	where $\frac{1}{\mu}$ is an integrable positive function on $\Omega$ and $C$ is a constant, holds for all $(n,1)$-form $\alpha\in Dom_{\Omega}(\bar\partial^*)\cap Ker(\bar\partial)\cap C_{(n,1)}^{\infty}(\bar\Omega)$. Then there is a solution $u$ to the equation $\bar\partial u=\lambda$ such that
	$$\int_{\Omega}|u|^2\mu e^{-\Phi}\leq C.$$
\end{Lemma}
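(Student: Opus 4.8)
The plan is to prove this lemma --- the Hahn--Banach dual form of Hörmander's $L^2$-estimate --- by soft functional analysis, the hypothesis doing all the quantitative work. First I would introduce the Hilbert space $\mathcal H:=L^2_{(n,0)}(\Omega,\frac{e^{-\Phi}}{\mu})$ of $(n,0)$-forms $v$ with $\langle v,v\rangle_{\mathcal H}:=\int_\Omega|v|^2\frac{e^{-\Phi}}{\mu}<+\infty$; this is a genuine Hilbert space since $\Phi\in C(\bar\Omega)$ is bounded and $\frac1\mu$ is a positive function in $L^1(\Omega)$, so $\frac{e^{-\Phi}}{\mu}$ is a positive locally integrable weight. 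Inside $\mathcal H$ I consider the linear subspace $W:=\{\bar\partial_{\Phi}^*\alpha:\alpha\in Dom_{\Omega}(\bar\partial^*)\cap Ker(\bar\partial)\cap C^{\infty}_{(n,1)}(\bar\Omega)\}$ and the linear functional $L:W\to\mathbb C$ defined by $L(\bar\partial_{\Phi}^*\alpha):=\overline{(\lambda,\alpha)_{\Omega,\Phi}}$.

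The stated inequality does two things at once. Well-definedness: if $\bar\partial_{\Phi}^*\alpha_1=\bar\partial_{\Phi}^*\alpha_2$ with $\alpha_1,\alpha_2$ admissible, then $\alpha_1-\alpha_2$ is admissible and the hypothesis forces $|(\lambda,\alpha_1-\alpha_2)_{\Omega,\Phi}|^2\le C\cdot 0=0$. Boundedness: $|L(\bar\partial_{\Phi}^*\alpha)|=|(\lambda,\alpha)_{\Omega,\Phi}|\le\sqrt C\,\|\bar\partial_{\Phi}^*\alpha\|_{\mathcal H}$, so $\|L\|\le\sqrt C$. By Hahn--Banach $L$ extends to a bounded functional on $\mathcal H$ of norm $\le\sqrt C$, and by Riesz representation there is $w\in\mathcal H$ with $\|w\|_{\mathcal H}^2\le C$ and $L(v)=\langle v,w\rangle_{\mathcal H}$ for all $v\in\mathcal H$. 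Setting $u:=w/\mu$ gives $\int_\Omega|u|^2\mu e^{-\Phi}=\int_\Omega|w|^2\frac{e^{-\Phi}}{\mu}=\|w\|_{\mathcal H}^2\le C$, the asserted bound, and tracing the definitions back (taking complex conjugates) yields $(\lambda,\alpha)_{\Omega,\Phi}=(u,\bar\partial_{\Phi}^*\alpha)_{\Omega,\Phi}$ for every admissible smooth $\alpha$. Moreover $u\in L^1(\Omega)$, since $\int_\Omega|u|\le\big(\int_\Omega|u|^2\mu e^{-\Phi}\big)^{1/2}\big(\int_\Omega\frac{e^{\Phi}}{\mu}\big)^{1/2}<+\infty$, so $\bar\partial u=\lambda$ makes sense as an equation of currents.

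Next I would promote the identity to all $\alpha\in Dom_{\Omega}(\bar\partial^*)$. For $\alpha$ closed but merely in $Dom_{\Omega}(\bar\partial^*)$ one invokes the classical density lemma of Hörmander: $C^{\infty}_{(n,1)}(\bar\Omega)\cap Dom_{\Omega}(\bar\partial^*)$ is dense in $Dom_{\Omega}(\bar\partial)\cap Dom_{\Omega}(\bar\partial^*)$ in the graph norm, valid because $\Omega$ is bounded strictly pseudoconvex with $C^{\infty}$ boundary and $\Phi$ is continuous. For a general $\alpha\in Dom_{\Omega}(\bar\partial^*)$ one splits $\alpha=\alpha_1+\alpha_2$ orthogonally in $L^2_{(n,1)}(\Omega,e^{-\Phi})$ with $\alpha_1\in Ker(\bar\partial)$ and $\alpha_2\perp Ker(\bar\partial)$; since $\mathrm{Im}(\bar\partial)\subseteq Ker(\bar\partial)$ one has $\alpha_2\in Ker(\bar\partial_{\Phi}^*)$, hence $\bar\partial_{\Phi}^*\alpha=\bar\partial_{\Phi}^*\alpha_1$, and since $\lambda\in Ker(\bar\partial)$ one has $(\lambda,\alpha_2)_{\Omega,\Phi}=0$, so $(\lambda,\alpha)_{\Omega,\Phi}=(\lambda,\alpha_1)_{\Omega,\Phi}=(u,\bar\partial_{\Phi}^*\alpha_1)_{\Omega,\Phi}=(u,\bar\partial_{\Phi}^*\alpha)_{\Omega,\Phi}$. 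Testing against compactly supported smooth $(n,1)$-forms (whose $\bar\partial_{\Phi}^*$ automatically lies in $\mathcal H$ because $\frac1\mu\in L^1$), integration by parts turns $(u,\bar\partial_{\Phi}^*\alpha)_{\Omega,\Phi}$ into the pairing of the current $\bar\partial u$ with $\alpha$, so the identity reads $\bar\partial u=\lambda$ in the sense of currents; finally, since $\lambda$ is smooth, interior hypoellipticity of $\bar\partial$ (elliptic regularity for the $\bar\partial$-Laplacian) upgrades $u$ to a smooth solution.

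I expect the only real obstacle to be the third paragraph, specifically the interaction between the density lemma and the weight $\frac1\mu$: the Friedrichs approximation converges in the unweighted graph norm, and one must argue that passing to compactly supported smooth test forms (for which no weighted convergence is needed, thanks to $\frac1\mu\in L^1$) is enough to read off $\bar\partial u=\lambda$. Everything else --- Hahn--Banach, Riesz representation, the orthogonal splitting, and the final regularity step --- is routine. I would also point out that the Bochner--Kodaira--Hörmander identity with boundary term recorded in Lemma~\ref{l23} is not used in the proof of Lemma~\ref{l24} itself; it enters only afterwards, when one verifies the hypothesis $|(\lambda,\alpha)_{\Omega,\Phi}|^2\le C\int_\Omega|\bar\partial_{\Phi}^*\alpha|^2\frac{e^{-\Phi}}{\mu}$ in concrete situations such as Lemma~\ref{l:L2}.
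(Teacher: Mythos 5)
Your argument is the standard duality proof of this estimate, and it is essentially correct; note that the paper itself does not prove Lemma \ref{l24} at all --- it is quoted from \cite{berndtsson} (see also \cite{guan-zhou13ap}) --- so there is no in-paper proof to compare against. The core of your proposal (the weighted Hilbert space $L^2_{(n,0)}(\Omega,e^{-\Phi}/\mu)$, the functional $\bar\partial_{\Phi}^*\alpha\mapsto\overline{(\lambda,\alpha)_{\Omega,\Phi}}$ made well-defined and bounded by the hypothesis, Hahn--Banach plus Riesz, and the substitution $u=w/\mu$ giving exactly $\int_\Omega|u|^2\mu e^{-\Phi}\le C$) is correct, as is your observation that $u\in L^1(\Omega)$ because $e^{\Phi}/\mu$ is integrable, and that Lemma \ref{l23} plays no role here but only in verifying the hypothesis in Step 2 of the proof of Lemma \ref{l:L2}.

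The one place you should be more careful is precisely the spot you flag: upgrading the identity $(\lambda,\alpha)_{\Omega,\Phi}=(u,\bar\partial_{\Phi}^*\alpha)_{\Omega,\Phi}$ from \emph{smooth, $\bar\partial$-closed} admissible $\alpha$ to arbitrary compactly supported smooth test forms. Your orthogonal splitting $\alpha=\alpha_1+\alpha_2$ correctly reduces a general test form to the closed component $\alpha_1\in\mathrm{Ker}(\bar\partial)\cap Dom_{\Omega}(\bar\partial^*)$, but $\alpha_1$ is no longer smooth up to the boundary, and the Friedrichs--H\"ormander approximants $\alpha_\nu\to\alpha_1$ in graph norm are smooth but no longer $\bar\partial$-closed, so the hypothesis as literally stated cannot be applied to them. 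The clean fix is to define the functional from the outset on $\{\bar\partial_{\Phi}^*\alpha:\alpha\in Dom_{\Omega}(\bar\partial^*)\cap\mathrm{Ker}(\bar\partial)\}$ and to extend the a priori inequality to this whole set by noting that in the approximation one also controls $\|\bar\partial\alpha_\nu\|\to 0$; in concrete applications (including Lemma \ref{l:L2}) the Bochner--Kodaira identity of Lemma \ref{l23} actually yields the estimate with the extra term $\int_{\Omega}\eta|\bar\partial\alpha|^2e^{-\Phi}$ on the left for \emph{all} smooth admissible $\alpha$, which is exactly what makes this limit passage harmless. With that point made explicit, your proof is complete; the final elliptic-regularity remark is optional, since only an $L^2_{loc}$ solution in the sense of currents is needed downstream.
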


The following lemma will be used in the proof of Lemma \ref{l:appro}.
\begin{Lemma}
	[see \cite{Demaillybook}]\label{l:demailly}
	For arbitrary $\eta=(\eta_1,\ldots,\eta_{p})\in(0,+\infty)^p,$ the function
	$$M_{\eta}(t_1,\ldots,t_p)=\int_{\mathbb{R}^p}\max\{t_1+h_1,\ldots,t_p+h_p\}\prod_{1\le j\le p}\theta\left(\frac{h_j}{\eta_j}\right)dh_1\ldots dh_{p}$$
	possesses the following properties:
	
	$(1)$ $M_{\eta}(t_1,\ldots,t_p)$ is non decreasing in all variables, smooth and convex on $\mathbb{R}^p$;
	
	$(2)$ $\max\{t_1,\ldots,t_p\}\leq M_{\eta}(t_1,\ldots,t_p)\le\max\{t_1+\eta_1,\ldots,t_p+\eta_p\}$;
	
	$(3)$ $M_{\eta}(t_1,\ldots,t_p)=M_{(\eta_1,\ldots,\hat{\eta}_j,\ldots,\eta_p)}(t_1,\ldots,\hat{t}_j,\ldots,t_p)$ if $t_j+\eta_j\le\max_{k\not=j}\{t_k-\eta_k\}$;
	
	$(4)$ $M_{\eta}(t_1+a,\ldots,t_p+a)=M_{\eta}(t_1,\ldots,t_p)+a$ for any $a\in\mathbb{R}$;
	
	$(5)$ if $u_1,\ldots,u_p$ are plurisubharmonic functions, then $u=M_{\eta}(u_1,\ldots,u_p)$ is plurisubharmonic.
\end{Lemma}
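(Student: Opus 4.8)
The plan is to read off all five properties from the fact that $M_{\eta}$ is the mollification of the convex function $\mu(s):=\max\{s_1,\ldots,s_p\}$. Concretely, write $d\nu_{\eta}(h):=\prod_{1\le j\le p}\theta(h_j/\eta_j)\,dh_1\cdots dh_p$ for the (normalized, probability) measure appearing in the definition, where $\theta$ is the fixed mollifier of the paper: smooth, non-negative, even, and supported in $[-1,1]$. Then $\nu_{\eta}$ is supported in the box $\prod_j[-\eta_j,\eta_j]$, has total mass $1$ and vanishing first moments, and $M_{\eta}(t)=\int_{\mathbb{R}^p}\mu(t+h)\,d\nu_{\eta}(h)$. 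Since $\mu$ is locally bounded, convex, non-decreasing in each coordinate and $1$-Lipschitz, while the density of $\nu_{\eta}$ is $C^{\infty}_c$, the integral converges absolutely and one may differentiate under the integral sign.

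I would then verify the properties in turn. For (1): differentiation under the integral sign gives $M_{\eta}\in C^{\infty}$; monotonicity in $t_k$ follows since $\mu$ is non-decreasing in each coordinate and $\nu_{\eta}\ge0$; convexity follows from $\mu(\lambda(t+h)+(1-\lambda)(t'+h))\le\lambda\mu(t+h)+(1-\lambda)\mu(t'+h)$ after integrating against $\nu_{\eta}$. For (2): the bound $\mu(t+h)\ge t_k+h_k$ integrated against $\nu_{\eta}$ gives $M_{\eta}(t)\ge t_k$ (the first moment in $h_k$ vanishes by evenness), hence $M_{\eta}(t)\ge\max_k t_k$; while on $\mathrm{supp}\,\nu_{\eta}$ one has $\mu(t+h)\le\max_j(t_j+\eta_j)$, giving the upper bound after integration. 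Property (4) is immediate from $\mu(t+a\mathbf{1}+h)=\mu(t+h)+a$ and total mass $1$. For (3): the hypothesis $t_j+\eta_j\le\max_{k\ne j}(t_k-\eta_k)$ forces, for every $h\in\mathrm{supp}\,\nu_{\eta}$, $t_j+h_j\le t_j+\eta_j\le\max_{k\ne j}(t_k-\eta_k)\le\max_{k\ne j}(t_k+h_k)$, so the $j$-th slot is inactive on the entire support and $\mu(t+h)=\max_{k\ne j}(t_k+h_k)$ there; Fubini in $h_j$ (which contributes the appropriate normalizing factor $\int\theta(h_j/\eta_j)\,dh_j$) then collapses $M_{\eta}$ to its $(p-1)$-variable counterpart.

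For (5) I would invoke that, by (1), $M_{\eta}$ is convex on $\mathbb{R}^p$ and non-decreasing in each variable. Writing $M_{\eta}$ as the supremum of its affine minorants — which, by the coordinatewise monotonicity, can be chosen with non-negative linear part — each such minorant composed with plurisubharmonic $u_1,\ldots,u_p$ is of the form $c_0+\sum_j c_j u_j$ with $c_j\ge0$, hence plurisubharmonic; the supremum is then plurisubharmonic, being locally bounded above by $\max_j(u_j+\eta_j)$ and upper semicontinuous (indeed continuous wherever the $u_j$ are), which is precisely the standard composition theorem for convex coordinatewise non-decreasing functions applied to plurisubharmonic functions.

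The main obstacle is bookkeeping rather than depth. The two places demanding a little care are: justifying differentiation under the integral sign when $\mu$ is only Lipschitz (dominated convergence, using that difference quotients of $\mu$ are uniformly bounded by $1$); and, in (3), checking that the $j$-th coordinate is inactive on the \emph{entire} support of $\nu_{\eta}$ before applying Fubini. Everything else is a direct unwinding of the definition, using only convexity, coordinatewise monotonicity, the Lipschitz bound, and the support and normalization of the mollifier.
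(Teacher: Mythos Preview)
The paper does not give its own proof of this lemma: it is quoted verbatim from Demailly's book with the citation \emph{[see \cite{Demaillybook}]} and used as a black box in the construction of the approximating functions $\varphi_m,\Psi_m$ in Lemma~\ref{l:appro}. Your argument is correct and is essentially the standard proof one finds in Demailly's text: recognize $M_\eta$ as the convolution of $\mu(s)=\max_j s_j$ against a smooth compactly supported probability density, then read off (1)--(4) from convexity, coordinatewise monotonicity, the Lipschitz bound, and the support/moment properties of the mollifier, and deduce (5) from the composition theorem for convex non-decreasing functions of plurisubharmonic functions. Two small remarks: for smoothness it is cleanest to first change variables $s=t+h$ so that the $t$-dependence sits entirely in the $C^\infty_c$ factor $\prod_j\theta\bigl((s_j-t_j)/\eta_j\bigr)$ before differentiating (your phrasing hints at this but does not say it explicitly); and your normalization assumption that $\nu_\eta$ has total mass $1$ is exactly what is needed for (4), so you are implicitly using the convention $\int_{\mathbb{R}}\theta(h/\eta)\,dh/\eta=1$, which is the intended one.
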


Following the notations and assumptions in Lemma \ref{l:L2}, we present the following approximation property of $\varphi$ and $\Psi$.
\begin{Lemma}
	\label{l:appro}For any $t_0>0$, there exist smooth plurisubharmonic functions $\{\varphi_m\}_{m\in\mathbb{Z}_{>0}}$ and  smooth functions $\{\Psi_m\}_{m\in\mathbb{Z}_{>0}}$ on $D\backslash\{F=0\}$, such that
	
	$(1)$ $\{\varphi_m+\Psi_m\}_{m\in\mathbb{Z}_{>0}}$ and $\{\varphi_m+(1+\delta)\Psi_m\}_{m\in\mathbb{Z}_{>0}}$ are smooth plurisubharmonic functions;
	
	$(2)$ the sequence $\{\varphi_m\}_{m\in\mathbb{Z}_{>0}}$  is  convergent to $\varphi$, and there exists a smooth plurisubharmonic function $\varphi_0$ on $D\backslash\{F=0\}$ such that $\varphi_0\ge\varphi_m\ge\varphi$ on $D\backslash\{F=0\}$ for any $m\in\mathbb{Z}_{>0}$;
	
	$(3)$ the sequence $\{\varphi_m+(1+\delta)\Psi_m\}_{m\in\mathbb{Z}_{>0}}$ is decreasingly convergent to $\varphi+(1+\delta)\Psi$;
	
	$(4)$ $\{\Psi_m<-t_0\}\subset\{\Psi<-t_0\}$ and $\Psi_m\le0$ on $D\backslash\{F=0\}$ for any $m\in\mathbb{Z}_{>0}$.
\end{Lemma}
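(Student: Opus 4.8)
The plan is to reduce the statement to Demailly's regularized-maximum construction $M_{\eta}$ (Lemma \ref{l:demailly}) combined with a decreasing smooth approximation of $\psi$. The key preliminary observation is the elementary identity, valid pointwise on $D$ (also on $\{F=0\}$, with the stated convention),
$$\max\{\psi,2\log|F|\}+\min\{\psi-2\log|F|,0\}=\psi,$$
obtained by separating the cases $\psi\ge 2\log|F|$ and $\psi<2\log|F|$. Hence $\varphi+(1+\delta)\Psi=(1+\delta)\psi$ and $\varphi+\Psi=\delta\max\{\psi,2\log|F|\}+\psi$. This tells us how to choose the approximants: fix a decreasing sequence $\psi_m$ of smooth psh functions tending to $\psi$, and \emph{arrange} $\varphi_m$ so that $\varphi_m+(1+\delta)\Psi_m$ equals $(1+\delta)\psi_m$; then (3) holds automatically, and the work is to realize $\varphi_m$ as a smooth psh function tending to $\varphi$ while keeping $\Psi_m\le 0$ and controlling $\{\Psi_m<-t_0\}$.

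Concretely, I would first use that a pseudoconvex domain $D\subset\mathbb C^n$ is Stein, so the classical regularization theorem (see \cite{Demaillybook}) furnishes smooth psh functions $\tilde\psi_m$ on $D$ with $\tilde\psi_m\downarrow\psi$; replacing $\tilde\psi_m$ by $\psi_m:=\tilde\psi_m+2^{-m}$, the sequence stays smooth psh, still decreases to $\psi$, and now satisfies $\psi_m\ge\psi+2^{-m}$. Next I would pick $\eta_m=(\eta_{m,1},\eta_{m,2})\in(0,+\infty)^2$ with $\eta_m\to0$, $\eta_{m,1}<t_0$, $\eta_{m,2}<2^{-m}$, and on $D\setminus\{F=0\}$ — where $2\log|F|$ is pluriharmonic, in particular smooth and psh — set
$$\varphi_m:=(1+\delta)\,M_{\eta_m}\big(\psi_m,\,2\log|F|\big),\qquad \Psi_m:=\psi_m-M_{\eta_m}\big(\psi_m,\,2\log|F|\big).$$
Then $\varphi_m$ and $M_{\eta_m}(\psi_m,2\log|F|)$ are smooth psh by Lemma \ref{l:demailly}(1),(5); $\varphi_m+(1+\delta)\Psi_m=(1+\delta)\psi_m$ and $\varphi_m+\Psi_m=\delta\,M_{\eta_m}(\psi_m,2\log|F|)+\psi_m$ are smooth psh, which is (1); and $\varphi_m+(1+\delta)\Psi_m=(1+\delta)\psi_m\downarrow(1+\delta)\psi=\varphi+(1+\delta)\Psi$, which is (3). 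For (2): Lemma \ref{l:demailly}(2) together with $\psi_m\ge\psi$ gives $M_{\eta_m}(\psi_m,2\log|F|)\ge\max\{\psi_m,2\log|F|\}\ge\max\{\psi,2\log|F|\}$, so $\varphi_m\ge\varphi$; the same part gives $\varphi_m\le(1+\delta)\max\{\psi_m+\eta_{m,1},2\log|F|+\eta_{m,2}\}$, which tends to $\varphi$ since $\psi_m\downarrow\psi$ and $\eta_m\to0$; and with $c:=\sup_m\max\{\eta_{m,1},\eta_{m,2}\}<+\infty$ one gets $\varphi_m\le(1+\delta)\big(\max\{\psi_1,2\log|F|\}+c\big)\le(1+\delta)\big(M_{\eta_1}(\psi_1,2\log|F|)+c\big)=:\varphi_0$, a smooth psh majorant of all the $\varphi_m$.

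It remains to prove (4), which is the only delicate point. From $M_{\eta_m}(\psi_m,2\log|F|)\ge\psi_m$ we get $\Psi_m\le0$ immediately. For the sublevel-set inclusion, Lemma \ref{l:demailly}(2) yields
$$\Psi_m\ \ge\ \psi_m-\max\{\psi_m+\eta_{m,1},\,2\log|F|+\eta_{m,2}\}\ =\ \min\{-\eta_{m,1},\ \psi_m-2\log|F|-\eta_{m,2}\},$$
so, using $\eta_{m,1}<t_0$, the set $\{\Psi_m<-t_0\}$ is contained in $\{\psi_m-2\log|F|<\eta_{m,2}-t_0\}$; there one has $\psi-2\log|F|+2^{-m}\le\psi_m-2\log|F|<\eta_{m,2}-t_0<2^{-m}-t_0$, hence $\psi-2\log|F|<-t_0$, i.e. $\Psi<-t_0$. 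Thus $\{\Psi_m<-t_0\}\subset\{\Psi<-t_0\}$. The point to watch is exactly this $\eta_{m,2}$-slack in the upper bound for $M_{\eta_m}$; it is absorbed by the cushion $\psi_m-\psi\ge2^{-m}>\eta_{m,2}$ built in at the start, which is the reason for perturbing $\tilde\psi_m$ to $\psi_m=\tilde\psi_m+2^{-m}$. Everything else is routine manipulation with the properties listed in Lemma \ref{l:demailly}; no $L^2$ or $\bar\partial$ input is needed here, that entering only when this approximation is fed into Lemmas \ref{l23} and \ref{l24} to prove Lemma \ref{l:L2}.
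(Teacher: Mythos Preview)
Your proof is correct and follows essentially the same construction as the paper: approximate $\psi$ from above by smooth psh $\psi_m$, set $\varphi_m=(1+\delta)M_{\eta_m}(\psi_m,2\log|F|)$ and $\Psi_m=\psi_m-\varphi_m/(1+\delta)$. The only difference is in verifying (4): the paper exploits property (3) of Lemma~\ref{l:demailly} to do a trichotomy (on $\{\psi_m\le 2\log|F|-2\eta_m\}$ one has $M_{\eta_m}=2\log|F|$ exactly, so $\Psi_m=\psi_m-2\log|F|$ there, and $\psi_m\ge\psi$ alone suffices), whereas you use only the two-sided bound (2) and compensate for the resulting $\eta_{m,2}$-slack by building in the cushion $\psi_m\ge\psi+2^{-m}>\psi+\eta_{m,2}$ --- a slightly longer but equally valid route.
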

\begin{proof}
	As $D$ is a pseudoconvex domain, there exists a sequence of plurisubharmonic functions $\{\psi_m\}_{m\in\mathbb{Z}_{>0}}$, which is decreasingly convergent to $\psi$.
	Let $\eta_m=(\frac{t_0}{3m},\frac{t_0}{3m})$. Set
	$$\varphi_m=(1+\delta)M_{\eta_m}(\psi_m,2\log|F|),$$
	 and
	 $$\Psi_m=\psi_m-\frac{\varphi_m}{1+\delta},$$
	which are smooth functions on $D\backslash\{F=0\}$.

	Now, we prove that $(\varphi_m,\Psi_m)$ satisfies the four conditions. Lemma \ref{l:demailly} shows that $\varphi_m$ is plurisubharmonic. Note that
	\[\varphi_m+\Psi_m=\psi_m+\frac{\delta}{1+\delta}\varphi_m\]
	 and
	 \[\varphi_m+(1+\delta)\Psi_m=(1+\delta)\psi_m\]
	  are smooth plurisubharmonic functions, and $\{\varphi_m+(1+\delta)\Psi_m\}_{m\in\mathbb{Z}_{>0}}$ decreasingly converges to $\varphi+(1+\delta)\Psi=(1+\delta)\psi$. Set
	  \[\varphi_0=(1+\delta)\left(\max\{\psi_1,2\log|F|\}+\frac{t_0}{3}\right).\]
	It follows from Lemma \ref{l:demailly} that
	  $$\varphi\le(1+\delta)\max\{\psi_m,2\log|F|\}\le\varphi_m\le(1+\delta)\left(\max\{\psi_m,2\log|F|\}+\frac{t_0}{3m}\right),$$
which implies that $\{\varphi_m\}_{m\in\mathbb{Z}_{>0}}$ is convergent to $\varphi$ and $\varphi\le\varphi_m\le\varphi_0$ on $D\backslash\{F=0\}$. It can be verified by Lemma \ref{l:demailly} that:

(1) if $\psi_m\le2\log|F|-\frac{2t_0}{3m}$ holds, then $\varphi_m=2(1+\delta)\log|F|$ and $\Psi_m=\psi_m-2\log|F|\le0$;

(2) if $\psi_m\ge 2\log|F|+\frac{2t_0}{3m}$ holds, then $\varphi_m=(1+\delta)\psi_m$ and $\Psi_m=0$;

(3) if $2\log|F|-\frac{2t_0}{3m}<\psi_m<2\log|F|+\frac{2t_0}{3m}$ holds, then $-\frac{t_0}{m}\le\Psi_m\le0$, and
\[(1+\delta)\max\{\psi_m,2\log|F|\}\le\varphi_m\le (1+\delta)\left(\psi_m+\frac{t_0}{m}\right).\]

Thus, we have
\[\{\Psi_m<-t_0\}\subset\{\psi-2\log|F|<-t_0\}\subset\{\Psi<-t_0\},\]
and $\Psi_m\le0$ on $D\backslash\{F=0\}$.
\end{proof}

Now we begin to prove Lemma \ref{l:L2}.

Note that $D\backslash\{F=0\}$ is a pseudoconvex domain. The following remark shows that it suffices to consider the case of Lemma \ref{l:L2} that $F(z)\not=0$ for any $z\in D$.

\begin{Remark}
	Assume that there exists a holomorphic function $F_1$ on $D\backslash\{F=0\}$ such that
	\begin{equation}
		\label{eq:0212a}\begin{split}
		&\int_{D\backslash\{F=0\}}|F_1-(1-b_{t_0,B}(\Psi))fF^{1+\delta}|^2e^{-\varphi+v_{t_0,B}(\Psi)-\Psi}\\
		\le&\left(\frac{1}{\delta}+1-e^{-t_0-B} \right)\int_D\frac{1}{B}\mathbb{I}_{\{-t_0-B<\Psi<-t_0\}}|f|^2e^{-\Psi}.
	\end{split}	
	\end{equation}
	Note that $v_{t_0,B}(\Psi)-\Psi\ge0$ on $D$ and $|F|^{2(1+\delta)}e^{-\varphi}=1$ on $\{\Psi<-t_0\}$. Then it follows from $\int_{\{\Psi<-t_0\}}|f|^2<+\infty$ and inequality \eqref{eq:0212a} that
	\begin{displaymath}
		\begin{split}
			&\int_{D\backslash\{F=0\}}|F_1|^2e^{-\varphi}\\
			\le&2\int_{D\backslash\{F=0\}}|(1-b_{t_0,B}(\Psi))f|^2+2\int_{D\backslash\{F=0\}}|F_1-(1-b_{t_0,B}(\Psi))fF^{1+\delta}|^2e^{-\varphi}\\
			\le&2\int_{\{\Psi<-t_0\}}|f|^2+\left(\frac{1}{\delta}+1-e^{-t_0-B} \right)\int_D\frac{1}{B}\mathbb{I}_{\{-t_0-B<\Psi<-t_0\}}|f|^2e^{-\Psi}\\
			<&+\infty,
		\end{split}
	\end{displaymath}
	which implies that there exists a holomorphic function $\tilde F$ on $D$ such that $\tilde F=F_1$ on $D\backslash\{F=0\}$. Thus, we have
		\begin{displaymath}\begin{split}
		&\int_{D}|\tilde F-(1-b_{t_0,B}(\Psi))fF^{1+\delta}|^2e^{-\varphi+v_{t_0,B}(\Psi)-\Psi}\\
		\le&\left(\frac{1}{\delta}+1-e^{-t_0-B} \right)\int_D\frac{1}{B}\mathbb{I}_{\{-t_0-B<\Psi<-t_0\}}|f|^2e^{-\Psi}.	
	\end{split}	\end{displaymath}
\end{Remark}

Let $D_1\Subset\ldots\Subset D_j\Subset D_{j+1}\Subset\ldots$ be a sequence of strongly pseudoconvex domains such that $\bigcup_{j=1}^{+\infty}D_j=D$. As discussed above, there exist smooth plurisubharmonic functions $\{\varphi_m\}_{m\in\mathbb{Z}_{>0}}$ and smooth functions $\{\Psi_m\}_{m\in\mathbb{Z}_{>0}}$ on $D$, such that:
	
	(1) $\{\varphi_m+\Psi_m\}_{m\in\mathbb{Z}_{>0}}$ and $\{\varphi_m+(1+\delta)\Psi_m\}_{m\in\mathbb{Z}_{>0}}$ are smooth plurisubharmonic functions;
	
	(2) the sequence $\{\varphi_m\}_{m\in\mathbb{Z}_{>0}}$  is  convergent to $\varphi$, and there exists a smooth plurisubharmonic function $\varphi_0$ on $D$ such that $\varphi_0\ge\varphi_m\ge\varphi$ on $D$ for any $m\in\mathbb{Z}_{>0}$;
	
	(3) the sequence $\{\varphi_m+(1+\delta)\Psi_m\}_{m\in\mathbb{Z}_{>0}}$ is decreasingly convergent to $\varphi+(1+\delta)\Psi$;
	
	$(4)$ $\{\Psi_m<-t_0\}\subset\{\Psi<-t_0\}$  and $\Psi_m\le0$ on $D$ for any $m\in\mathbb{Z}_{>0}$.
	
\

In the following, we prove Lemma \ref{l:L2} in $6$ steps.

  \

\emph{Step 1: Some Notations}

\

Let $\epsilon\in(0,\frac{1}{8}B)$.
Let $\{v_{\epsilon}\}_{\epsilon\in(0,\frac{1}{8}B)}$ be a family of smooth increasing convex functions on $\mathbb{R}$,
which are continuous functions on $\mathbb{R}\cup\{-\infty\}$, such that:

 $1)$ $v_{\epsilon}(t)=t$ for $t\geq-t_{0}-\epsilon$, $v_{\epsilon}(t)=constant$ for $t<-t_{0}-B+\epsilon$ and are pointwise convergent to $v_{t_0,B}$, when $\epsilon\to 0$;

 $2)$ $v''_{\epsilon}(t)$ are pointwise convergent to $\frac{1}{B}\mathbb{I}_{(-t_{0}-B,-t_{0})}$, when $\epsilon\to 0$,
 and $0\leq v''_{\epsilon}(t)\leq \frac{2}{B}\mathbb{I}_{(-t_{0}-B+\epsilon,-t_{0}-\epsilon)}$ for any $t\in \mathbb{R}$;

 $3)$ $v'_{\epsilon}(t)$ are pointwise convergent to $b_{t_0,B}(t)$ which is a continuous function on $\mathbb{R}\cup\{-\infty\}$, when $\epsilon\to 0$, and $0\leq v'_{\epsilon}(t)\leq1$ for any $t\in \mathbb{R}$.

One can construct the family $\{v_{\epsilon}\}_{\epsilon\in(0,\frac{1}{8}B)}$ by the setting
\begin{equation}
\label{eq:15}
v_{\epsilon}(t):=\int_{0}^{t}\left(\int_{-\infty}^{t_{1}}\left(\frac{1}{B-4\epsilon}
\mathbb{I}_{(-t_{0}-B+2\epsilon,-t_{0}-2\epsilon)}*\rho_{\frac{1}{4}\epsilon}\right)(s)ds\right)dt_{1},
\end{equation}
where $\rho_{\frac{1}{4}\epsilon}$ is the kernel of convolution satisfying $\mathrm{supp}(\rho_{\frac{1}{4}\epsilon})\subset (-\frac{1}{4}\epsilon,\frac{1}{4}\epsilon)$.
Then it follows that
$$v''_{\epsilon}(t)=\frac{1}{B-4\epsilon}\mathbb{I}_{(-t_{0}-B+2\epsilon,-t_{0}-2\epsilon)}*\rho_{\frac{1}{4}\epsilon}(t),$$
and
$$v'_{\epsilon}(t)=\int_{-\infty}^{t}\left(\frac{1}{B-4\epsilon}\mathbb{I}_{(-t_{0}-B+2\epsilon,-t_{0}-2\epsilon)}
*\rho_{\frac{1}{4}\epsilon}\right)(s)ds.$$
It is clear that $\lim_{\epsilon\to 0}v_{\epsilon}(t)=v_{t_0,B}(t)$, and $\lim_{\epsilon\to 0}v'_{\epsilon}(t)=b_{t_0,B}(t)$.

 Let $\eta=s(-v_{\epsilon}(\Psi_m))$ and $\phi=u(-v_{\epsilon}(\Psi_m))$,
where $s\in C^{\infty}((0,+\infty))$ satisfies $s\geq\frac{1}{\delta}$ and $s'>0$, and
$u\in C^{\infty}([0,+\infty))$, such that $u''s-s''>0$ and $s'-u's=1$ on $(0,+\infty)$.
Let $\Phi=\phi+\varphi_{m}+\Psi_m$.

\

\emph{Step 2: Solving the $\bar\partial$-equation}

\

Now let $\alpha=\sum_{j=1}^n\alpha_{\bar j}d\bar z^j\in$Dom$_{D_j}(\bar\partial^*)\cap $Ker$(\bar\partial)\cap C_{(0,1)}^{\infty}(\overline{D_j})$. It follows from Cauchy-Schwarz's inequality that
\begin{equation}
\label{eq:17}
		2\mathrm{Re}(\bar\partial_{\Phi}^*\alpha,\alpha\llcorner(\bar\partial\eta)^{\sharp})_{D_j,\Phi}\geq -\int_{D_j}g^{-1}|\bar\partial_{\Phi}^*\alpha|^2e^{-\Phi}+\sum_{j,k=1}^n\int_{D_j}-g(\partial_j\eta)(\bar\partial_k\eta)\alpha_{\bar j}\bar\alpha_{\bar k}e^{-\Phi},
\end{equation}
 where $g$ is a positive continuous function on $\overline{D_j}$.
Using Lemma \ref{l23} and inequality \eqref{eq:17}, since  $D_j$ is a strongly pseudoconvex domain, we get
\begin{equation}
	\label{eq:18}
	\begin{split}
		&\int_{D_j}(\eta+g^{-1})|\bar\partial_{\Phi}^*\alpha|^2e^{-\Phi}\\
		\geq &\sum_{j,k=1}^n\int_{D_j}(-\partial_j\bar\partial_k\eta+\eta\partial_j\bar\partial_k\Phi-g(\partial_j\eta)(\bar\partial_k\eta))\alpha_{\bar j}\bar\alpha_{\bar k}e^{-\Phi}\\
		= &\sum_{j,k=1}^n\int_{D_j}(-\partial_j\bar\partial_k\eta+\eta\partial_j\bar\partial_k(\phi+\varphi_m+\Psi_m)-g(\partial_j\eta)(\bar\partial_k\eta))\alpha_{\bar j}\bar\alpha_{\bar k}e^{-\Phi}.	\end{split}
\end{equation}
We need some calculations to determine $g$.

Since
\begin{equation*}
	\partial_j\bar\partial_k\eta=-s'(-v_{\epsilon}(\Psi_m))\partial_j\bar\partial_k(v_{\epsilon}(\Psi_m))+s''(-v_{\epsilon}(\Psi_m))\partial_jv_{\epsilon}(\Psi_m)\bar\partial_kv_{\epsilon}(\Psi_m),
	\end{equation*}
and
\begin{equation*}
	\partial_j\bar\partial_k\phi=-u'(-v_{\epsilon}(\Psi_m))\partial_j\bar\partial_k(v_{\epsilon}(\Psi_m))+u''(-v_{\epsilon}(\psi_m))\partial_jv_{\epsilon}(\Psi_m)\bar\partial_kv_{\epsilon}(\Psi_m),
\end{equation*}
for any $j,k$ $(1\leq j,k\leq n)$, we have
\begin{equation}
\label{eq:21}
\begin{split}
&\sum_{j,k=1}^n(-\partial_j\bar{\partial}_k\eta+\eta\partial_j\bar{\partial}_k\phi-g(\partial_j\eta)(\bar\partial_k\eta))\alpha_{\bar j}\bar\alpha_{\bar k}
\\=&(s'-su')\sum_{j,k=1}^n\partial_j\bar{\partial_k}v_{\epsilon}(\Psi_m)\alpha_{\bar j}\bar\alpha_{\bar k}\\
&+((u''s-s'')-gs'^{2})\sum_{j,k=1}^n\partial_j
(-v_{\epsilon}(\Psi_m))\bar{\partial}_k(-v_{\epsilon}(\Psi_m))\alpha_{\bar j}\bar\alpha_{\bar k}
\\=&(s'-su')\sum_{j,k=1}^n(v'_{\epsilon}(\Psi_m)\partial_j\bar{\partial}_k\Psi_m+v''_{\epsilon}(\Psi_m)
\partial_j(\Psi_m)\bar{\partial}_k(\Psi_m))\alpha_{\bar j}\bar\alpha_{\bar k}
\\&+((u''s-s'')-gs'^{2})\sum_{j,k=1}^n\partial_j
(-v_{\epsilon}(\Psi_m))\bar{\partial}_k(-v_{\epsilon}(\Psi_m))\alpha_{\bar j}\bar\alpha_{\bar k}
.
\end{split}
\end{equation}
We omit the composite item $-v_{\epsilon}(\Psi_m)$ after $s'-su'$ and $(u''s-s'')-gs'^{2}$ in the above equalities. As $\varphi_m+\Psi_m$ and $\varphi_m+(1+\delta)\Psi_m$ are plurisubharmonic functions and $v'_{\epsilon}\in[0,1]$, we have
$$(1-v'_{\epsilon}(\Psi_m))\sqrt{-1} \partial\bar\partial(\varphi_m+\Psi_m)+v'_{\epsilon}(\Psi_m)\sqrt{-1}\partial\bar\partial(\varphi_m+(1+\delta)\Psi_m)\ge0$$
on $\overline{D_j}$,
which implies that
\begin{equation}
	\label{eq:19}\begin{split}
	&\eta\sqrt{-1} \partial\bar\partial(\varphi_m+\Psi_m)+v'_{\epsilon}(\Psi_m)\sqrt{-1} \partial\bar\partial\Psi_m\\
	\ge&\frac{1}{\delta}\sqrt{-1} \partial\bar\partial(\varphi_m+\Psi_m)+v'_{\epsilon}(\Psi_m)\sqrt{-1} \partial\bar\partial\Psi_m\ge 0
	\end{split}
\end{equation}
on $\overline{D_j}$.

Let $g=\frac{u''s-s''}{s'^2}(-v_{\epsilon}(\Psi_m))$. It follows that $\eta+g^{-1}=\left(s+\frac{s'^2}{u''s-s''}\right)(-v_{\epsilon}(\Psi_m))$.
As $s'-su'=1$, using inequality \eqref{eq:18}, inequality \eqref{eq:21} and inequality \eqref{eq:19}, we obtain that
\begin{equation}
	\label{eq:22}\begin{split}
\int_{D_j}(\eta+g^{-1})|\bar\partial_{\Phi}^*\alpha|^2e^{-\Phi}
	\ge \int_{D_j} v''_{\epsilon}(\Psi_m)|\alpha\llcorner(\bar\partial\Psi_m)^{\sharp}|^2e^{-\Phi}.
	\end{split}
\end{equation}

As $fF^{1+\delta}$ is holomorphic on $\{\Psi<-t_0\}$ and $\overline{\{\Psi_m<-t_0-\epsilon\}}\subset\{\Psi_m<-t_0\} \subset\{\Psi<-t_0\}$,
\[\lambda:=\bar\partial\Big(\big(1-v'_{\epsilon}(\Psi_m)\big)fF^{1+\delta}\Big)\]
is well-defined and smooth on $D$. By the definition of contraction, Cauchy-Schwarz's inequality and inequality \eqref{eq:22}, it follows that
\begin{equation}
	\label{eq:23}
	\begin{split}
		|(\lambda,\alpha)_{D_j,\Phi}|^2=&|(v''_{\epsilon}(\Psi_m)fF^{1+\delta}\bar\partial\Psi_m,\alpha)_{D_j,\Phi}|^2\\
		=&|(v''_{\epsilon}(\Psi_m)fF^{1+\delta},\alpha\llcorner(\bar\partial\Psi_m)^{\sharp})_{D_j,\Phi}|^2\\
		\leq&\left(\int_{D_j}v''_{\epsilon}(\Psi_m)|fF^{1+\delta}|^2e^{-\Phi}\right)\left(\int_{D_j}v''_{\epsilon}(\Psi_m)|\alpha\llcorner(\bar\partial\Psi_m)^{\sharp}|^2e^{-\Phi}\right)\\
		\leq&\left(\int_{D_j}v''_{\epsilon}(\Psi_m)|fF^{1+\delta}|^2e^{-\Phi}\right)\left(\int_{D_j}(\eta+g^{-1})|\bar\partial_{\Phi}^*\alpha|^2e^{-\Phi}\right).
	\end{split}
\end{equation}
Set $\mu:=(\eta+g^{-1})^{-1}$. By Lemma \ref{l24}, there exists a locally $L^1$ function $u_{m,\epsilon,j}$ on $D_j$ such that $\bar\partial u_{m,\epsilon,j}=\lambda$, and
\begin{equation}
	\label{eq:24}
		\int_{D_j}|u_{m,\epsilon,j}|^2(\eta+g^{-1})^{-1}e^{-\Phi}\leq\int_{D_j}v''_{\epsilon}(\Psi_m)|fF^{1+\delta}|^2e^{-\Phi}.
\end{equation}

Assume that we can choose $\eta$ and $\phi$ such that $e^{v_{\epsilon}(\Psi_m)}e^{\phi}=(\eta+g^{-1})^{-1}$.
Then inequality \eqref{eq:24} becomes
\begin{equation}
 \label{eq:25}
 \begin{split}
 &\int_{D_j}|u_{m,\epsilon,j}|^{2}e^{v_{\epsilon}(\Psi_m)-\varphi_{m}-\Psi_m}
  \leq\int_{D_j}v''_{\epsilon}(\Psi_m)| F|^2e^{-\phi-\varphi_{m}-\Psi_m}.
  \end{split}
\end{equation}
Let
\[F_{m,\epsilon,j}:=-u_{m,\epsilon,j}+\big(1-v'_{\epsilon}(\Psi_m)\big){fF^{1+\delta}}.\]
Then $F_{m,\epsilon,j}$ is holomorphic on $D_j$. Now inequality \eqref{eq:25} becomes
\begin{equation}
 \label{eq:26}
 \begin{split}
 &\int_{D_j}|F_{m,\epsilon,j}-(1-v'_{\epsilon}(\Psi_m)){fF^{1+\delta}}|^{2}e^{v_{\epsilon}(\Psi_m)-\varphi_{m}-\Psi_m}
  \\
  \leq&\int_{D_j}(v''_{\epsilon}(\Psi_m))|fF^{1+\delta}|^2e^{-\phi-\varphi_{m}-\Psi_m}.
  \end{split}
\end{equation}

\

\emph{Step 3: $m\rightarrow+\infty$}

\

 Note that
 \[\sup_{m}\sup_{D_j}e^{-\phi}=\sup_{m}\sup_{D_j}e^{-u(-v_{\epsilon}(\Psi_m))}<+\infty.\]
As $\{\Psi_m<-t_0-\epsilon\}\subset\{\Psi<-t_0\}$ and $|F^{1+\delta}|^2e^{-\varphi}=1$ on $\{\Psi<-t_0\}$,
we have
\begin{displaymath}
	\begin{split}
		(v''_{\epsilon}(\Psi_m))|fF^{1+\delta}|^2e^{-\phi-\varphi_{m}-\Psi_m}&\le e^{t_0+B}(v''_{\epsilon}(\Psi_m))|fF^{1+\delta}|^2e^{-\phi-\varphi}\\
		&\le \left(\sup_{m}\sup_{D_j}e^{-\phi}\right)e^{t_0+B}\mathbb{I}_{\{\Psi<-t_0\}}|f|^2
	\end{split}
\end{displaymath}
on $D_j$. Then it follows from $\int_{\{\Psi<-t_0\}}|f|^2<+\infty$ and the dominated convergence theorem that
\begin{equation}
\label{eq:27}
\begin{split}
 &\lim_{m\rightarrow +\infty}\int_{D_j}(v''_{\epsilon}(\Psi_m))|fF^{1+\delta}|^2e^{-\phi-\varphi_{m}-\Psi_m}\\
 =&
\int_{D_j}(v''_{\epsilon}(\Psi))|fF^{1+\delta}|^2e^{-u(-v_{\epsilon}(\Psi))-\varphi-\Psi}<+\infty.
\end{split}
\end{equation}
Since $\inf_{m}\inf_{D_j}e^{v_{\epsilon}(\Psi_m)-\varphi_{m}-\Psi_m}\ge\inf_{D_j}e^{-\varphi_{0}}>0$, by inequality \eqref{eq:26}, we have
\[\sup_{m}\int_{D_j}|F_{m,\epsilon,j}-(1-v'_{\epsilon}(\Psi_m)){fF^{1+\delta}}|^{2}<+\infty.\]
According to
\begin{equation*}
\left|(1-v'_{\epsilon}(\Psi_m)){fF^{1+\delta}}\right|\leq \left(\sup_{D_j}|F|^{1+\delta}\right)\mathbb{I}_{\{\Psi<-t_{0}\}}|f|,
\end{equation*}
and $\int_{\{\Psi<-t_0\}}|f|^2<+\infty$, we get $\sup_{m}\int_{D_j}|F_{m,\epsilon,j}|^{2}<+\infty$,
which implies that there exists a subsequence of $\{F_{m,\epsilon,j}\}_{m\in \mathbb{Z}_{>0}}$
(also denoted by $F_{m,\epsilon,j}$) compactly convergent to a holomorphic function $F_{\epsilon,j}$ on $D_j$.
Now Fatou's Lemma, inequality \eqref{eq:26} and inequality \eqref{eq:27} indicate that
\begin{equation}
 \label{eq:31}
 \begin{split}
 &\int_{D_j}|F_{\epsilon,j}-(1-v'_{\epsilon}(\Psi)){fF^{1+\delta}}|^{2}e^{v_{\epsilon}(\Psi)-\varphi-\Psi}\\
 \le&\liminf_{m\rightarrow+\infty}\int_{D_j}\left|F_{m,\epsilon,j}-(1-v'_{\epsilon}(\Psi_m)){fF^{1+\delta}}\right|^{2}e^{v_{\epsilon}(\Psi_m)-\varphi_{m}-\Psi_m}
  \\\le&\lim_{m\rightarrow+\infty}\int_{D_j}v''_{\epsilon}(\Psi_m)|fF^{1+\delta}|^2e^{-\phi-\varphi_{m}-\Psi_m}\\
 =&
\int_{D_j}v''_{\epsilon}(\Psi)|fF^{1+\delta}|^2e^{-u(-v_{\epsilon}(\Psi))-\varphi-\Psi}\\
=&\int_{D_j}v''_{\epsilon}(\Psi)|f|^2e^{-u(-v_{\epsilon}(\Psi))-\Psi}.
  \end{split}
\end{equation}

\

\emph{Step 4: $\epsilon\rightarrow0+0$}

\

Note that $\sup_{\epsilon}\sup_{D_j}e^{-u(-v_{\epsilon}(\Psi))}<+\infty$, and
\begin{displaymath}
(v''_{\epsilon}(\Psi))|f|^2e^{-u(-v_{\epsilon}(\Psi))-\Psi}
		\le\left(\sup_{\epsilon}\sup_{D_j}e^{-u(-v_{\epsilon}(\Psi))}\right)e^{t_0+B}\mathbb I_{\{\Psi<-t_0\}}|f|^2
\end{displaymath}
on $D_j$. Then it follows from $\int_{\{\Psi<-t_0\}}|f|^2<+\infty$ and the dominated convergence theorem that
\begin{equation}
\label{eq:32}
\begin{split}
&\lim_{\epsilon\to0}\int_{D_j}(v''_{\epsilon}(\Psi))|f|^2e^{-u(-v_{\epsilon}(\Psi))-\Psi}
\\=&\int_{D_j}\frac{1}{B}\mathbb{I}_{\{-t_{0}-B<\Psi<-t_{0}\}}|f|^2e^{-u(-v_{t_0,B}(\Psi))-\Psi}
\\\leq&\left(\sup_{D_j}e^{-u(-v_{t_0,B}(\Psi))}\right)\int_{D_j}\frac{1}{B}\mathbb{I}_{\{-t_{0}-B<\Psi<-t_{0}\}}|f|^2e^{-\Psi}\\
<&+\infty.
\end{split}
\end{equation}
Since $\inf_{\epsilon}\inf_{D_j}e^{v_{\epsilon}(\Psi)-\varphi-\Psi}>0$, according to inequality \eqref{eq:31} and \eqref{eq:32}, we have
\[\int_{D_j}\left|F_{\epsilon,j}-(1-v'_{\epsilon}(\Psi)){fF^{1+\delta}}\right|^{2}<+\infty.\]
Combining with
\begin{equation*}
\sup_{\epsilon}\int_{D_j}|(1-v'_{\epsilon}(\Psi)){fF^{1+\delta}}|^{2}\leq\left(\sup_{D_j}|F|^{2(1+\delta)}\right)\int_{D_j}\mathbb{I}_{\{\Psi<-t_{0}\}}|f|^{2}<+\infty,
\end{equation*}
one can obtain $\sup_{\epsilon}\int_{D_j}|F_{\epsilon,j}|^{2}<+\infty$, which implies that there exists a subsequence of $\{F_{\epsilon,j}\}_{\epsilon\rightarrow0+0}$ (denoted by $\{F_{\epsilon_l,j}\}_{l\in\mathbb{Z}_{>0}}$)
compactly convergent to a holomorphic function $F_{j}$ on $D_j$. Now Fatou's Lemma, inequality \eqref{eq:31} and inequality \eqref{eq:32} verify that
\begin{equation}
 \label{eq:36}
 \begin{split}
 &\int_{D_j}|F_{j}-(1-b_{t_0,B}(\Psi)){fF^{1+\delta}}|^{2}e^{v_{t_0,B}(\Psi)-\varphi-\Psi}\\
 \le&\liminf_{l \rightarrow+\infty}\int_{D_j}|F_{\epsilon_l,j}-(1-v'_{\epsilon_l}(\Psi)){fF^{1+\delta}}|^{2}e^{v_{\epsilon_l}(\Psi)-\varphi-\Psi}
  \\\le&\liminf_{l \rightarrow +\infty}\int_{D_j}(v''_{\epsilon_l}(\Psi))|f|^2e^{-u(-v_{\epsilon_l}(\Psi))-\Psi}\\\leq&\left(\sup_{D_j}e^{-u(-v_{t_0,B}(\Psi))}\right)\int_{D_j}\frac{1}{B}\mathbb{I}_{\{-t_{0}-B<\Psi<-t_{0}\}}|f|^2e^{-\Psi}.
  \end{split}
\end{equation}

\

\emph{Step 5: $j\rightarrow+\infty$}

\

Note that
\begin{equation}
\label{eq:37}
\begin{split}
	&\lim_{j\rightarrow+\infty}\left(\sup_{D_j}e^{-u(-v_{t_0,B}(\Psi))}\right)\int_{D_j}\frac{1}{B}\mathbb{I}_{\{-t_{0}-B<\Psi<-t_{0}\}}|f|^2e^{-\Psi}\\
	\le&\left(\sup_{D}e^{-u(-v_{t_0,B}(\Psi))}\right)\int_{D}\frac{1}{B}\mathbb{I}_{\{-t_{0}-B<\Psi<-t_{0}\}}|f|^2e^{-\Psi}<+\infty,
\end{split}
\end{equation}
and $v_{t_0,B}(\Psi)-\Psi\ge0$. Then it follows from inequality \eqref{eq:36} that
$$\limsup_{j\rightarrow+\infty}\int_{D_j}|F_{j}-(1-b_{t_0,B}(\Psi)){fF^{1+\delta}}|^{2}e^{-\varphi}<+\infty.$$
Combining with
\[\int_{D_j}|(1-b_{t_0,B}(\Psi)){fF^{1+\delta}}|^2e^{-\varphi}\le\int_{\{\Psi<-t_0\}}|f|^2<+\infty,\]
we obtain
$$\limsup_{j\rightarrow+\infty}\int_{D_j}|F_{j}|^{2}e^{-\varphi}<+\infty.$$
Then we can extract a compactly convergent subsequence of $\{F_{j}\}$ (also denoted by $\{F_{j}\}$),
which is convergent to a holomorphic function $\tilde{F}$ on $D$. It follows from Fatou's Lemma, inequality \eqref{eq:36} and inequality \eqref{eq:37} that
\begin{equation}
 \label{eq:41}
 \begin{split}
 &\int_{D}|\tilde F-(1-b_{t_0,B}(\Psi)){fF^{1+\delta}}|^{2}e^{v_{t_0,B}(\Psi)-\varphi-\Psi}\\
 \le&\liminf_{j \rightarrow+\infty}\int_{D_j}|F_{j}-(1-b_{t_0,B}(\Psi)){fF^{1+\delta}}|^{2}e^{v_{t_0,B}(\Psi)-\varphi-\Psi}
  \\\le&\lim_{j\rightarrow+\infty}\left(\sup_{D_j}e^{-u(-v_{t_0,B}(\Psi))}\right)\int_{D_j}\frac{1}{B}\mathbb{I}_{\{-t_{0}-B<\Psi<-t_{0}\}}|f|^2e^{-\Psi}\\
	\le&\left(\sup_{D}e^{-u(-v_{t_0,B}(\Psi))}\right)\int_{D}\frac{1}{B}\mathbb{I}_{\{-t_{0}-B<\Psi<-t_{0}\}}|f|^2e^{-\Psi}.
  \end{split}
\end{equation}

\

\emph{Step 6: ODE system}

\

Finally, it suffices to find $\eta$ and $\phi$ such that
$\eta+g^{-1}=e^{-v_{\epsilon}(\Psi_m)}e^{-\phi}$  on $D_j$ and $s'-u's=1$. As $\eta=s(-v_{\epsilon}(\Psi_m))$ and $\phi=u(-v_{\epsilon}(\Psi_m))$, we have
\[(\eta+g^{-1}) e^{v_{\epsilon}(\Psi_m)}e^{\phi}=\left(\left(s+\frac{s'^{2}}{u''s-s''}\right)e^{-t}e^{u}\right)\circ\big(-v_{\epsilon}(\Psi_m)\big).\]

Summarizing the above discussions about $s$ and $u$, we are naturally led to a system of ODEs (see \cite{guan-zhou12,guan-zhou13p,guan-zhou13ap,GZeff}):
\begin{equation}
\label{eq:42}
\begin{split}
&(1)\,\,\left(s+\frac{s'^{2}}{u''s-s''}\right)e^{u-t}=1, \\
&(2)\,\,s'-su'=1,
\end{split}
\end{equation}
where $t\in(0,+\infty)$. Solving the ODE system \eqref{eq:42}, we get
\[u(t)=-\log\left(\frac{1}{\delta}+1-e^{-t}\right),\]
and
\[s(t)=\frac{\left(1+\frac{1}{\delta}\right)t+\frac{1}{\delta}\left(1+\frac{1}{\delta}\right)}{\frac{1}{\delta}+1-e^{-t}}-1,\]
(see \cite{GZeff}).
It follows that $s\in C^{\infty}((0,+\infty))$ satisfying $s\ge\frac{1}{\delta}$ and $s'>0$, and
$u\in C^{\infty}([0,+\infty))$ satisfying $u''s-s''>0$.

As $u(t)=-\log\left(\frac{1}{\delta}+1-e^{-t}\right)$ is decreasing with respect to $t$, and $0\geq v(t)\geq\max\{t,-t_{0}-B_{0}\}\geq -t_{0}-B_{0}$ for any $t\leq0$, we can see that
\begin{equation*}
\begin{split}
\sup_{D}e^{-u(-v_{t_0,B}(\Psi))}
\leq\sup_{t\in[0,t_{0}+B]}e^{-u(t)}
=\frac{1}{\delta}+1-e^{-t_0-B}.
\end{split}
\end{equation*}
Therefore, we are done.

Eventually, we complete the proof of Lemma \ref{l:L2}.

\subsection{An example on the Hartogs triangle}\label{sec:eg}

\

In the second part of the appendix, for the sake of the completeness, we drag an explicit example for the objects described in the present paper. In this example, the function $F$ is not trivial (i.e. not a constant function). A similar example can also be seen in \cite{BG22}.

Let $D=\Delta^2$ be the unit polydisc in $\mathbb{C}^2$ with the coordinate $(z,w)$, $\psi=2\log|z|$ a plurisubharmonic function on $\Delta^2$, and $F=w^k$ a holomorphic function on $\Delta^2$ for a positive integer $k$. Then we have
\[\Psi(z,w):=\min\{\psi-2\log|F|, 0\}=\min\left\{2\log\left|\frac{z}{w^k}\right|, 0\right\},\]
where $\Psi(o,w):=0$ for any $w\in\Delta$. It follows that
\[D_t:=\{\Psi<-t\}=\left\{(z,w)\in\Delta\times\Delta : |z|<e^{-t/2}|w|^k\right\}, \ \forall t\in [0,+\infty).\]
Especially,
\[D_0=\left\{(z,w)\in\mathbb{C}^2 : |z|<|w|^k<1\right\},\]
which is also called the (generalized) \emph{Hartogs triangle} (see \cite{EM16,Sha15}). One can see that $o$ is a boundary point of $D_t$ for any $t\ge 0$.

Fix $t\ge 0$. Note that $D_t$ is a Reinhardt domain (which means that $D_t$ is invariant under the $\mathbb{T}^n-$action). Then any holomorphic function on $D_t$ can be written as its Laurent expansion. Let
\[f(z,w)=\sum_{(j,l)\in\mathbb{Z}^2}c_{j,l}z^jw^l\in\mathcal{O}(D_t).\]
One can verify that
\[\int_{D_t}|f|^2=\sum_{(j,l)\in\mathbb{Z}^2}|c_{j,l}|^2\int_{D_t}|z|^{2j}|w|^{2l},\]
where
\begin{equation*}
	\int_{D_t}|z|^{2j}|w|^{2l}=\left\{
		\begin{array}{ll}
			\dfrac{\pi^2 e^{-(j+1)t}}{(j+1)(jk+k+l+1)}, & \text{if} \ j\geq 0, \ jk+k+l\geq 0,\\
			+\infty, & \text{otherwise.}
		\end{array}\right.
\end{equation*}
Similarly,
\[\int_{D_t}|f|^2e^{-\Psi}=\sum_{(j,l)\in\mathbb{Z}^2}|c_{j,l}|^2\int_{D_t}|z|^{2j-2}|w|^{2l+2k},\]
where
\begin{equation*}
	\int_{D_t}|z|^{2j-2}|w|^{2l+2k}=\left\{
		\begin{array}{ll}
			\dfrac{\pi^2 e^{-jt}}{j(jk+k+l+1)}, & \text{if} \ j\geq 1, \ jk+k+l\geq 0,\\
			+\infty, & \text{otherwise.}
		\end{array}\right.
\end{equation*}
Then it can be summarized that
\[I(\Psi)_o=\Big\{h(z,w)=\sum_{(j,l)\in\mathbb{Z}^2}a_{j,l}z^jw^l \text{\ near \ } o : a_{j,l}=0, \ \forall (j,l)\in \mathcal{S} \Big\},\]
where
\[\mathcal{S}=\big\{(j,l)\in\mathbb{Z}^2 : j\le 0 \ \text{or} \ l\leq -(jk+k+1)\big\}.\]

Set $J=I(\Psi)_o$, and $f=w^{-k}$. It is not hard to calculate that
\[G(t;\Psi,J,f)=\int_{D_t}|f|^2=\pi^2 e^{-t},\]
which shows that $G(-\log r; \Psi,J,f)$ is concave with respect to $r\in (0,1]$ in this case. 

Under the same assumptions above, one can also verify that $c_o^{fF}(\psi)=1/2$, and
\begin{equation*}
	\frac{1}{r^2}\int_{\{c_o^{fF}(\psi)\psi-\log|F|<\log r\}}|f|^2=\frac{1}{r^2}\int_{D_{-2\log r}}|w|^{-2k}=\pi^2.
\end{equation*}
On the other hand, since
\[\Psi_1:=\min\{2c_o^{fF}(\psi)\psi-2\log|F|,0\}=\Psi,\]
with some similar computations for $a\Psi$ ($a>1$), one can see that
\[I_+(\Psi)_o=\bigcup_{a>1}I(a\Psi)_o=I(\Psi)_o.\]
Now according to the value of $G(t;\Psi,J,f)$, it can be obtained that
\[\frac{1}{r^2}\int_{\{c_o^{fF}(\psi)\psi-\log|F|<\log r\}}|f|^2=\pi^2=G(0;\Psi_1,I_+(\Psi)_o,f),\]
for any $r\in (0,1]$. This also shows that the effectiveness result in Theorem \ref{thm:J_M} is sharp.

It is easy to check that $a^f_o(\Psi)=1/2$, thus one can verify that the lower bound in Theorem \ref{thm:J-M2} is also sharp. The strong openness property for $\Psi$ has been checked.

\bibliographystyle{references}
\bibliography{xbib}

\end{document}